\documentclass[oneside,reqno,english]{amsart}
\usepackage[T1]{fontenc}
\usepackage[utf8]{inputenc}
\setcounter{tocdepth}{1}
\usepackage{xcolor}
\usepackage{babel}
\usepackage{prettyref}
\usepackage{amstext}
\usepackage{amsthm}
\usepackage{amssymb}
\usepackage[pdfusetitle,
 bookmarks=true,bookmarksnumbered=false,bookmarksopen=false,
 breaklinks=false,pdfborder={0 0 0},pdfborderstyle={},backref=false,colorlinks=false]
 {hyperref}
\hypersetup{
 colorlinks=true,citecolor=blue,linkcolor=blue,linktocpage=true}

\makeatletter
\numberwithin{equation}{section}
\numberwithin{figure}{section}

\usepackage{prettyref}

\newrefformat{cor}{Corollary~\ref{#1}}
\newrefformat{subsec}{Section~\ref{#1}}
\newrefformat{lem}{Lemma~\ref{#1}}
\newrefformat{thm}{Theorem~\ref{#1}}
\newrefformat{sec}{Section~\ref{#1}}
\newrefformat{chap}{Chapter~\ref{#1}}
\newrefformat{prop}{Proposition~\ref{#1}}
\newrefformat{exa}{Example~\ref{#1}}
\newrefformat{tab}{Table~\ref{#1}}
\newrefformat{rem}{Remark~\ref{#1}}
\newrefformat{def}{Definition~\ref{#1}}
\newrefformat{fig}{Figure~\ref{#1}}
\newrefformat{claim}{Claim~\ref{#1}}
\newrefformat{assu}{Assumption~\ref{#1}}

\usepackage{tikz}
\usetikzlibrary{trees,positioning}

\makeatother

\theoremstyle{plain}
\newtheorem{thm}{\protect\theoremname}[section]
\newtheorem{lem}[thm]{\protect\lemmaname}
\theoremstyle{remark}
\newtheorem{rem}[thm]{\protect\remarkname}
\theoremstyle{plain}
\newtheorem{cor}[thm]{\protect\corollaryname}
\newtheorem{prop}[thm]{\protect\propositionname}
\theoremstyle{definition}
\newtheorem{defn}[thm]{\protect\definitionname}
\theoremstyle{plain}
\newtheorem{assumption}[thm]{\protect\assumptionname}
\theoremstyle{remark}
\newtheorem*{rem*}{\protect\remarkname}
\newtheorem*{notation*}{\protect\notationname}
\providecommand{\assumptionname}{Assumption}
\providecommand{\corollaryname}{Corollary}
\providecommand{\definitionname}{Definition}
\providecommand{\lemmaname}{Lemma}
\providecommand{\notationname}{Notation}
\providecommand{\propositionname}{Proposition}
\providecommand{\remarkname}{Remark}
\providecommand{\theoremname}{Theorem}

\begin{document}
\subjclass[2020]{Primary: 47B65. Secondary: 28A50, 46G10, 47B10, 60G42}
\title[]{Boundary Disintegration for Weighted Residual Energy Trees}
\begin{abstract}
We study iterated weighted residual (WR) splittings generated by a
positive operator $R_{0}\in B\left(H\right)_{+}$ and a finite family
of contractions $C_{1},\dots,C_{m}$ in $B\left(H\right)$. The associated
residual update $R\mapsto R^{1/2}(I-C^{*}_{j}C_{j})R^{1/2}$ produces
an $m$-ary energy tree of residuals $\left\{ R_{w}\right\} $ and
dissipated pieces $\left\{ D_{w,j}\right\} $ indexed by finite words.
From this tree we construct intrinsic path measures on the path space
by biasing transitions either by a fixed quadratic form $x\mapsto\left\langle x,D_{w,j}x\right\rangle $
(defining the measures $\nu_{x}$) or, in the trace-class setting,
by ${\rm tr}\left(D_{w,j}\right)$ (yielding a reference measure $\nu_{\mathrm{tr}}$).
When $R_{0}\in S_{1}\left(H\right)_{+}$, we show that $\nu_{\mathrm{tr}}$
dominates the family $\left\{ \nu_{x}\right\} $ and identify $d\nu_{x}/d\nu_{\mathrm{tr}}$
as a canonical martingale limit of cylinder likelihood ratios. Along
$\nu_{\mathrm{tr}}$-almost every branch the residuals decrease to
a terminal trace-class random variable $R_{\infty}$, which we interpret
as the WR boundary variable. We then disintegrate $\nu_{\mathrm{tr}}$
over $\sigma\left(R_{\infty}\right)$, obtaining a boundary law $\mu_{\mathrm{tr}}=\left(R_{\infty}\right)_{\#}\nu_{\mathrm{tr}}$
and conditional path measures $\left\{ \nu^{T}_{\mathrm{tr}}\right\} $.
Finally, we show that each $\nu_{x}$ admits a boundary representation
as a mixture of $\left\{ \nu^{T}_{\mathrm{tr}}\right\} $ with an
explicit boundary density $h_{x}=d\mu_{x}/d\mu_{\mathrm{tr}}$, thereby
organizing the family of intrinsic WR path measures by a single trace-biased
boundary disintegration.
\end{abstract}

\author{James Tian}
\address{Mathematical Reviews, 535 W. William St, Suite 210, Ann Arbor, MI
48103, USA}
\email{james.ftian@gmail.com}
\keywords{Weighted residual splitting; energy tree; path-space measures; trace
bias; boundary disintegration}

\maketitle
\tableofcontents{}

\section{Introduction}\label{sec:1}

Let $H$ be a (complex) Hilbert space, let $R_{0}\in B\left(H\right)_{+}$,
and fix contractions 
\[
C_{1},\dots,C_{m}\in B\left(H\right).
\]
For each $j\in\left\{ 1,\dots,m\right\} $ we consider the weighted
residual (WR) update 
\[
\Phi_{j}\left(R\right):=R^{1/2}\left(I-C^{*}_{j}C_{j}\right)R^{1/2}\in B\left(H\right)_{+}.
\]
Iterating these updates along finite words produces a rooted $m$-ary
tree of residual operators 
\[
R_{\emptyset}:=R_{0},\qquad R_{wj}:=\Phi_{j}\left(R_{w}\right),
\]
together with dissipated pieces along the edges, 
\[
D_{w,j}:=R_{w}-R_{wj}=R^{1/2}_{w}C^{*}_{j}C_{j}R^{1/2}_{w}\in B\left(H\right)_{+}.
\]
The one-step identity $R_{w}=D_{w,j}+R_{wj}$ is elementary, but its
repeated use organizes the dynamics into a canonical telescoping decomposition
along every branch. The purpose of this paper is to attach to this
energy tree a natural family of probability measures on a path space,
and to identify the resulting boundary object that governs the asymptotic
behavior of the residuals. What makes the construction genuinely operator-driven
is that the transition rules are generated from the evolving residuals
themselves, rather than imposed externally on the tree.

The main results are as follows: (i) the construction of two intrinsic
path measures $\nu_{x}$ and $\nu_{\mathrm{tr}}$ on an absorbed path
space $\Omega$ determined by the WR energy tree; (ii) quantitative
extinction estimates for the associated scalar residual processes
under uniform leakage hypotheses; (iii) under the trace-class hypothesis
$R_{0}\in S_{1}\left(H\right)_{+}$, domination $\nu_{x}\ll\nu_{\mathrm{tr}}$
for every $x$ and an identification of $d\nu_{x}/d\nu_{\mathrm{tr}}$
as the canonical $\nu_{\mathrm{tr}}$-martingale limit of finite-level
likelihood ratios on cylinders; and (iv) a boundary disintegration
of $\nu_{\mathrm{tr}}$ over the terminal trace-class variable $R_{\infty}$,
together with a boundary representation of each $\nu_{x}$ via a density
$h_{x}$ on $S_{1}\left(H\right)_{+}$.

A basic feature of contraction splittings is that the process may
terminate. At a node $w$ it can happen that all edge energies vanish
for a given bias functional, so there is no longer a meaningful choice
among $\left\{ 1,\dots,m\right\} $. Rather than patching in an auxiliary
probability vector at such nodes, we adjoin an absorbing symbol $0$
and work on the enlarged alphabet $\mathcal{A}:=\left\{ 0,1,\dots,m\right\} $
with the convention that once $0$ occurs it persists. This choice
keeps the cylinder-set bookkeeping unchanged while treating termination
as part of the intrinsic dynamics. (One can think of $0$ as a bookkeeping
device that turns ``dead nodes'' into honest stopping times.)

Two bias functionals play distinct roles. Fix $x\in H$ with $\left\langle x,R_{0}x\right\rangle >0$.
The scalar edge energies 
\[
e_{w,j}\left(x\right):=\left\langle x,D_{w,j}x\right\rangle 
\]
determine a family of transition probabilities and hence a path measure
$\nu_{x}$ on $\Omega=\mathcal{A}^{\mathbb{N}}$. This vector-biased
law is intrinsic in $H$ and depends on the chosen starting vector.
In the trace-class setting $R_{0}\in S_{1}\left(H\right)_{+}$ there
is also a distinguished bias functional independent of a choice of
vector: 
\[
e^{\left(\mathrm{tr}\right)}_{w,j}:=\mathrm{tr}\left(D_{w,j}\right),
\]
leading to a reference path measure $\nu_{\mathrm{tr}}$. The measures
$\nu_{x}$ and $\nu_{\mathrm{tr}}$ are constructed from the same
energy tree, but they reflect different notions of dissipation along
edges, and a priori they need not be related.

The first part of the paper develops the deterministic bookkeeping
needed for these constructions. Along any infinite path $\omega\in\Omega$
we define the residual and dissipation processes $\left\{ R_{n}\left(\omega\right)\right\} $
and $\left\{ D_{n}\left(\omega\right)\right\} $, prove the finite
telescoping identity 
\[
R_{0}=R_{n}\left(\omega\right)+\sum^{n}_{k=1}D_{k}\left(\omega\right),
\]
and obtain monotonicity $R_{n}\left(\omega\right)\downarrow R_{\infty}\left(\omega\right)$
in the strong operator topology. When $R_{0}\in S_{1}\left(H\right)_{+}$,
the residuals and dissipations remain trace class and the same telescoping
identity holds in trace norm. This produces the terminal trace-class
random variable $R_{\infty}:\Omega\to S_{1}\left(H\right)_{+}$ under
$\nu_{\mathrm{tr}}$, which we treat as the boundary variable of the
WR energy tree. The associated tail $\sigma$-field is 
\[
\mathcal{B}_{\mathrm{WR}}:=\sigma\left(R_{\infty}\right),
\]
equivalently $\sigma\left(R_{0}-R_{\infty}\right)$.

The second part of the paper identifies the relation between the two
intrinsic path laws. Under the trace-class hypothesis, we show that
$\nu_{\mathrm{tr}}$ dominates the family $\left\{ \nu_{x}\right\} $
and that the Radon-Nikodym derivative $\rho_{x}=d\nu_{x}/d\nu_{\mathrm{tr}}$
is obtained as the $\nu_{\mathrm{tr}}$-almost sure limit of the finite-level
likelihood ratios on cylinders. In particular, $\left\{ \rho_{x,n}\right\} $
forms a canonical martingale with respect to the coordinate filtration,
and $\rho_{x}$ is its almost sure limit. This gives a concrete change-of-measure
principle on the WR path space that will be used repeatedly.

The final part establishes boundary disintegration. Let $\mu_{\mathrm{tr}}:=\left(R_{\infty}\right)_{\#}\nu_{\mathrm{tr}}$
be the law of the boundary variable. Since $\Omega$ is compact metric
and $S_{1}\left(H\right)_{+}$ is Polish in the trace norm topology,
standard disintegration yields a measurable family of conditional
path measures $\left\{ \nu^{T}_{\mathrm{tr}}\right\} _{T\in S_{1}\left(H\right)_{+}}$
supported on the fibers of $R_{\infty}$, with 
\[
\nu_{\mathrm{tr}}\left(E\right)=\int_{S_{1}\left(H\right)_{+}}\nu^{T}_{\mathrm{tr}}\left(E\right)d\mu_{\mathrm{tr}}\left(T\right)
\]
for Borel $E\subset\Omega$. Combining this with the change of measure
produces a boundary representation for each $\nu_{x}$: there is a
boundary density $h_{x}\in L^{1}\left(\mu_{\mathrm{tr}}\right)$,
characterized by 
\[
d\mu_{x}=h_{x}d\mu_{\mathrm{tr}},\qquad\mu_{x}:=\left(R_{\infty}\right)_{\#}\nu_{x},
\]
such that 
\[
\nu_{x}\left(E\right)=\int_{S_{1}\left(H\right)_{+}}h_{x}\left(T\right)\nu^{T}_{\mathrm{tr}}\left(E\right)d\mu_{\mathrm{tr}}\left(T\right).
\]
Thus the trace-biased law $\nu_{\mathrm{tr}}$ serves as a single
reference measure that organizes the entire vector-biased family through
boundary densities and conditional path laws. From this perspective,
the pair $\left(\mu_{\mathrm{tr}},\left\{ \nu^{T}_{\mathrm{tr}}\right\} \right)$
and the family $\left\{ h_{x}\right\} $ constitute the intrinsic
probabilistic data attached to the WR energy tree.

The present work is part of a sequence. In an earlier paper (projection-only
splittings) the tree was generated by projection data and the associated
path measures were built from projection energies. The contraction
setting treated here is strictly more flexible: the residual update
is nonlinear in $R$ and the edge labels depend on the contractions
through $C^{*}_{j}C_{j}$. The absorbing symbol formalism gives a
clean treatment of termination, and the trace-biased reference law
leads to a boundary disintegration that is well adapted to operator-theoretic
limits. We have chosen to keep the present paper focused on the commutative
path-space boundary; a noncommutative boundary theory for WR dynamics,
in the sense of operator-algebraic Poisson boundaries or Shilov-type
boundaries, will be addressed separately.

\subsection*{Relation to the literature}

At the operator-theoretic level, the update $\Phi_{j}(R)=R^{1/2}\left(I-C^{*}_{j}C_{j}\right)R^{1/2}$
is a nonlinear residual transform on $B(H)_{+}$ built from a contraction
$C_{j}$ (equivalently from the effect $A_{j}=C^{*}_{j}C_{j}$). This
places the present work in the general circle of ideas around order-theoretic
splittings of positive operators, including (in the projection and
``effect'' settings) variants of shorting/parallel-sum constructions
and their range-additivity phenomena; see, for example, \cite{MR3284802,MR2214409}
for modern operator-theoretic treatments connected to shorted operators
and related identities.

On the probabilistic side, our path measures are built from cylinder
weights on an $m$-ary tree, followed by Kolmogorov extension, and
later a disintegration over the terminal random variable $R_{\infty}$.
All of these steps are standard in measure-theoretic probability on
Polish spaces, but the novelty here is the specific \emph{operator-driven}
transition structure: at each node $w$ the next step is biased by
an intrinsic energy functional computed from the dissipated pieces
$D_{w,j}$. For background on regular conditional probabilities and
disintegration in Polish/Radon settings, see \cite{MR4226142,MR770643,MR3987300,MR226684,MR1873379,MR2267655}.

The boundary variable $R_{\infty}$ and the $\sigma$-field $\sigma(R_{\infty})$
that organizes the intrinsic measures $\left\{ \nu_{x}\right\} $
via the trace-biased reference law $\nu_{\mathrm{tr}}$ are also naturally
compared with boundary theories for Markov chains and random walks
on trees/graphs, where bounded harmonic functions admit Poisson-type
representations in terms of boundary data; see, e.g., \cite{MR3616205,MR704539,MR1178986,MR1246471,MR1743100}.
A key distinction is that here the chain is not specified by an ambient
graph alone: the transition probabilities are generated \emph{from
the evolving residuals} and hence are themselves part of the operator
dynamics.

Finally, the use of contractions and trace/Hilbert-Schmidt energies
aligns the WR tree with themes from completely positive maps and quantum
probability. In particular, trace-biasing is closely related to the
canonical ``energy'' that appears in Kraus/Stinespring representations
and in quantum trajectory models; see, for example, \cite{MR2329688,MR2369315,MR1805844,MR2065240}.
In a different but conceptually adjacent direction, noncommutative
Poisson boundaries attached to completely positive maps and to (discrete/locally
compact) quantum groups provide a mature boundary theory in which
harmonic elements are selected by a Markov operator on a noncommutative
algebra; see \cite{MR1916370,MR2400727,MR2660685,MR2335776,MR2200270,MR3048010}.
While the present paper stays within a commutative path-space boundary
disintegration, these operator-algebraic boundary theories provide
a natural point of comparison and suggest further developments.

\subsection*{Organization}

\noindent\prettyref{sec:2} develops the contraction energy tree
and the basic WR splittings. \prettyref{sec:3} constructs the path
space with absorption and defines the intrinsic measures $\nu_{x}$
and $\nu_{\mathrm{tr}}$ from cylinder weights. \prettyref{sec:4}
studies the residual and dissipation processes along a single path
and proves the telescoping identities and monotone limits. Sections
\ref{sec:5}-\ref{sec:6} give quantitative control of the tail in
the vector-biased and trace-biased settings and establish the change-of-measure
theorem $d\nu_{x}/d\nu_{\mathrm{tr}}$ as a martingale limit. \prettyref{sec:7}
introduces the boundary variable $R_{\infty}$, proves the disintegration
of $\nu_{\mathrm{tr}}$ over $\sigma\left(R_{\infty}\right)$, and
derives the boundary representation of $\nu_{x}$ via the boundary
density $h_{x}$.

\section{Positive splittings and monotone limits}\label{sec:2}

This section collects the deterministic telescoping calculus underlying
WR-type iterations. We begin with the projection case and the basic
monotone-tower mechanism: successive WR updates produce a Loewner-decreasing
residual sequence with a canonical strong-operator limit, and hence
a finite-level telescoping identity with a well-defined remainder
(\prettyref{lem:b-1} and \prettyref{rem:b-2}). With this in place,
we then shift to the general setting needed for the sequel, in which
the same splitting-and-telescoping is driven by contractions rather
than projections; the precise parametrization and the corresponding
telescoping statement are stated in \prettyref{thm:b-3} and \prettyref{prop:2-5}.
\begin{lem}
\label{lem:b-1}Let $R_{0}\in B\left(H\right)_{+}$ and let $\left\{ P_{j}\right\} ^{\infty}_{j=1}$
be a sequence of orthogonal projections in $B(H)$. Define recursively
\begin{equation}
R_{j}:=R^{1/2}_{j-1}\left(I-P_{j}\right)R^{1/2}_{j-1}\in B\left(H\right)_{+},\qquad j\ge1.\label{eq:b-1}
\end{equation}
Then there exists a unique $R_{\infty}\in B(H)_{+}$ such that $R_{j}\overset{s}{\to}R_{\infty}$
and 
\begin{equation}
R_{0}\overset{s}{=}R_{\infty}+\sum^{\infty}_{j=1}R^{1/2}_{j-1}P_{j}R^{1/2}_{j-1},\label{eq:b-2}
\end{equation}
i.e., the partial sums $\sum^{n}_{j=1}R^{1/2}_{j-1}P_{j}R^{1/2}_{j-1}$
converge strongly to $R_{0}-R_{\infty}$. In particular, for all $x\in H$,
\begin{equation}
\Vert R^{1/2}_{0}x\Vert^{2}=\Vert R^{1/2}_{\infty}x\Vert^{2}+\sum^{\infty}_{j=1}\Vert P_{j}R^{1/2}_{j-1}x\Vert^{2}.\label{eq:b-3}
\end{equation}
\end{lem}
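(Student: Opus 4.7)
The plan is to organize everything around the one-step identity
\[
R_{j-1}=R^{1/2}_{j-1}\bigl(I-P_{j}\bigr)R^{1/2}_{j-1}+R^{1/2}_{j-1}P_{j}R^{1/2}_{j-1}=R_{j}+R^{1/2}_{j-1}P_{j}R^{1/2}_{j-1},
\]
which is the spectral split $I=(I-P_{j})+P_{j}$ sandwiched between $R^{1/2}_{j-1}$. Both summands on the right are positive, so in the Loewner order $0\le R_{j}\le R_{j-1}$, and we already have both the telescoping structure and the monotone decrease of the residual sequence.

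First I would check that $\{R_{j}\}$ is Loewner-decreasing and uniformly bounded, then invoke the standard monotone-convergence theorem for self-adjoint operators (Vigier's theorem): a decreasing net in $B(H)_{sa}$ bounded below converges in the strong operator topology to its infimum. This yields a unique $R_{\infty}\in B(H)_{+}$ with $R_{j}\overset{s}{\to}R_{\infty}$ (uniqueness is automatic for SOT limits). Next, iterating the one-step identity gives the finite telescoping
\[
R_{0}=R_{n}+\sum^{n}_{j=1}R^{1/2}_{j-1}P_{j}R^{1/2}_{j-1},\qquad n\ge1,
\]
and since $R_{n}\overset{s}{\to}R_{\infty}$, the partial sums on the right converge strongly to $R_{0}-R_{\infty}$, which is \eqref{eq:b-2}.

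For \eqref{eq:b-3}, I would apply the finite identity to $x\in H$, take the inner product with $x$, and use that $P_{j}$ is an orthogonal projection to rewrite
\[
\bigl\langle R^{1/2}_{j-1}P_{j}R^{1/2}_{j-1}x,x\bigr\rangle=\bigl\langle P_{j}R^{1/2}_{j-1}x,P_{j}R^{1/2}_{j-1}x\bigr\rangle=\bigl\Vert P_{j}R^{1/2}_{j-1}x\bigr\Vert^{2}.
\]
Combined with $\langle R_{0}x,x\rangle=\Vert R^{1/2}_{0}x\Vert^{2}$ and $\langle R_{\infty}x,x\rangle=\Vert R^{1/2}_{\infty}x\Vert^{2}$, and letting $n\to\infty$ (the scalar telescoping is a sum of nonnegative terms converging monotonically), this gives the Parseval-type identity \eqref{eq:b-3}.

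There is no real obstacle: the argument is a clean Vigier-plus-telescoping. The only point that deserves care is that strong convergence $R_{n}\to R_{\infty}$ is enough to identify the limit of the partial sums (only SOT convergence of the residuals is used, not norm convergence), and that $P^{2}_{j}=P_{j}=P^{*}_{j}$ is what collapses the quadratic form into $\Vert P_{j}R^{1/2}_{j-1}x\Vert^{2}$; in the contraction generalization of \prettyref{thm:b-3} one loses this last simplification and must carry the operator $C^{*}_{j}C_{j}$ throughout.
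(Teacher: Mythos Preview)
Your proposal is correct and follows essentially the same route as the paper: the one-step splitting $R_{j-1}=R_{j}+R^{1/2}_{j-1}P_{j}R^{1/2}_{j-1}$, Loewner monotonicity $0\le R_{j}\le R_{j-1}$, Vigier's monotone convergence to obtain $R_{\infty}$, and then the finite telescoping passed to the SOT limit. The only cosmetic difference is that the paper begins at the quadratic-form level (the Pythagorean split of $\Vert R^{1/2}_{j-1}x\Vert^{2}$) and then promotes to operators, whereas you start at the operator level and specialize to vectors at the end; the content is the same.
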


\begin{proof}
Fix $j\ge1$ and $x\in H$. Since $P_{j}$ and $I-P_{j}$ have orthogonal
ranges, 
\begin{equation}
\Vert R^{1/2}_{j-1}x\Vert^{2}=\Vert\left(I-P_{j}\right)R^{1/2}_{j-1}x\Vert^{2}+\Vert P_{j}R^{1/2}_{j-1}x\Vert^{2}.\label{eq:b-4}
\end{equation}
By \eqref{eq:b-1}, 
\begin{align*}
\Vert\left(I-P_{j}\right)R^{1/2}_{j-1}x\Vert^{2} & =\langle x,R^{1/2}_{j-1}\left(I-P_{j}\right)R^{1/2}_{j-1}x\rangle\\
 & =\left\langle x,R_{j}x\right\rangle =\Vert R^{1/2}_{j}x\Vert^{2},
\end{align*}
using the fact that $R_{j}\geq0$ so it has a positive square-root
$R^{1/2}_{j}$. Hence \eqref{eq:b-4} is equivalent to
\begin{align*}
\left\langle x,R_{j-1}x\right\rangle  & =\left\langle x,R_{j}x\right\rangle +\langle x,R^{1/2}_{j-1}P_{j}R^{1/2}_{j-1}x\rangle.
\end{align*}
Summing from $j=1$ to $n$ yields, for every $x\in H$, 
\[
\left\langle x,R_{0}x\right\rangle =\left\langle x,R_{n}x\right\rangle +\sum^{n}_{j=1}\langle x,R^{1/2}_{j-1}P_{j}R^{1/2}_{j-1}x\rangle,
\]
and therefore 
\[
R_{0}=R_{n}+\sum^{n}_{j=1}R^{1/2}_{j-1}P_{j}R^{1/2}_{j-1}\quad\text{in }B\left(H\right)_{+}.
\]
Next, since $0\le I-P_{j}\le I$, we have $0\le R_{j}\le R_{j-1}\le R_{0}$
for all $j$, so $(R_{n})$ is a bounded decreasing sequence in $B\left(H\right)_{+}$.
Hence $\left\langle x,R_{n}x\right\rangle $ decreases to a limit
for each $x$, and there is a unique $R_{\infty}\in B(H)_{+}$ such
that $R_{n}\xrightarrow{s}R_{\infty}$ (see e.g., \cite{MR493419}).
Using the finite identity, 
\[
\sum^{n}_{j=1}R^{1/2}_{j-1}P_{j}R^{1/2}_{j-1}=R_{0}-R_{n}\xrightarrow{s}R_{0}-R_{\infty},
\]
which gives \eqref{eq:b-2}. The identity \eqref{eq:b-3} follows
from this. 
\end{proof}
\begin{rem}
\label{rem:b-2}The preceding WR iteration should be contrasted with
the ``bare'' Kaczmarz/Pythagoras expansion (see, e.g., \cite{MR2311862,MR34514,MR2903120,MR3773065,MR2580440})
\begin{align*}
\left\Vert x\right\Vert ^{2} & =\left\Vert \left(I-P_{1}\right)x\right\Vert ^{2}+\left\Vert P_{1}x\right\Vert ^{2}\\
 & =\left\Vert \left(I-P_{2}\right)\left(I-P_{1}\right)x\right\Vert ^{2}+\left\Vert P_{2}\left(I-P_{1}\right)x\right\Vert ^{2}+\left\Vert P_{1}x\right\Vert ^{2}\\
 & \cdots
\end{align*}
Formally, this gives for each $n$ the exact finite decomposition
\[
\left\Vert x\right\Vert ^{2}=\left\Vert \left(I-P_{n}\right)\cdots\left(I-P_{1}\right)x\right\Vert ^{2}+\sum^{n}_{j=1}\left\Vert P_{j}\left(I-P_{j-1}\right)\cdots\left(I-P_{1}\right)x\right\Vert ^{2}.
\]
However, there is in general no reason for the ``tail term'' 
\begin{equation}
\left(I-P_{n}\right)\cdots\left(I-P_{1}\right)x\label{eq:b-5}
\end{equation}
to converge as $n\to\infty$. Products of orthogonal projections can
oscillate, and convergence typically requires additional geometric
input (angle conditions, regularity hypotheses, or special structure
such as commuting/averaged projections); without such hypotheses,
the finite telescoping identity does not come with a canonical limiting
remainder. 

A substantial literature studies precisely when cyclic (or more generally
non-periodic) products of orthogonal projections converge, and how
this depends on the mutual geometry of the underlying subspaces. One
convenient set of sufficient conditions is formulated in terms of
angle/regularity criteria guaranteeing uniform convergence for averaged
projections and for cyclic or random products; see \cite{MR3773065}.
Related convergence results for non-periodic infinite products of
orthogonal projections (and, more generally, nonexpansive operators)
are developed in \cite{MR2903120,MR3145756}, while quantitative error
bounds for simultaneous projection schemes with infinitely many subspaces
appear in \cite{MR4310540}. For a concise discussion of the general
phenomenon that iterated projection products may fail to converge
without additional hypotheses, see also \cite{MR2252935}.

The WR iteration avoids this instability by reweighting after each
split. Instead of repeatedly applying $\left(I-P_{j}\right)$ to the
fixed vector $x$, one applies $\left(I-P_{j}\right)$ to the evolving
vector $R^{1/2}_{j-1}x$, where $R_{j}$ is updated by \eqref{eq:b-1}.
This produces a monotone tower $0\le R_{j}\le R_{j-1}\le R_{0}$,
hence $R_{j}\xrightarrow{s}R_{\infty}$, and the ``tail energy''
$\Vert R^{1/2}_{j}x\Vert^{2}$ always converges. Equivalently, the
residual operators $R^{1/2}_{j-1}P_{j}R^{1/2}_{j-1}$ form a strongly
convergent series, yielding the canonical decomposition \eqref{eq:b-2}.
In short, Kaczmarz controls a sequence of vectors \eqref{eq:b-5},
whose limit may fail to exist, whereas WR controls a Loewner-decreasing
sequence $R_{n}\downarrow R_{\infty}$, so the remainder term is forced
to converge.
\end{rem}

It is useful to observe that the WR iteration step is only one instance
of a more general positive splitting mechanism. The point is not the
projection itself, but the fact that the update produces a decomposition
of a positive operator into two positive pieces, one of which becomes
the new remainder. This suggests enlarging the class of admissible
“splitting operators” beyond projections, while keeping the same telescoping
and strong-limit features of \prettyref{lem:b-1}. \prettyref{thm:b-3}
identifies the correct level of generality and provides the converse
statement that any positive splitting arises in this way; \prettyref{cor:b-4}
then recovers the projection case.
\begin{thm}
\label{thm:b-3}Let $H$ be a Hilbert space and $R\in B(H)_{+}$. 
\begin{enumerate}
\item If $C\in B(H)$ is a contraction, define 
\[
D:=R^{1/2}C^{*}CR^{1/2},\qquad R_{1}:=R^{1/2}\left(I-C^{*}C\right)R^{1/2}.
\]
Then $D,R_{1}\in B\left(H\right)_{+}$, and for every $x\in H$, 
\begin{align*}
\langle x,Rx\rangle & =\langle x,R_{1}x\rangle+\langle x,Dx\rangle\\
 & =\Vert\left(I-C^{*}C\right)^{1/2}R^{1/2}x\Vert^{2}+\Vert CR^{1/2}x\Vert^{2}.
\end{align*}
\item Conversely, if $A,B\in B(H)_{+}$ with $A+B=R$, then there exists
a contraction $C\in B(H)$, unique up to the choice of the partial
isometry in the Douglas factorization on $\overline{ran}\left(A^{1/2}\right)\subset\overline{ran}(R^{1/2})$,
such that 
\[
A=R^{1/2}C^{*}CR^{1/2},\qquad B=R^{1/2}\left(I-C^{*}C\right)R^{1/2}.
\]
\end{enumerate}
\end{thm}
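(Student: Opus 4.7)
The plan for part (1) is a direct computation. Since $C$ is a contraction, $I - C^*C \in B(H)_+$, so the positive square root $(I - C^*C)^{1/2}$ exists and one writes
\[
R_1 = \left[(I-C^*C)^{1/2}R^{1/2}\right]^{*}(I-C^*C)^{1/2}R^{1/2}, \qquad D = (CR^{1/2})^{*}(CR^{1/2}),
\]
exhibiting both as positive operators. The additivity $R_1 + D = R^{1/2}\bigl[(I-C^*C) + C^*C\bigr]R^{1/2} = R$ is immediate, and evaluating each summand as a quadratic form at $x \in H$ yields the stated two-term identity. So (1) is just the functional calculus on $[0,1]$.

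For part (2), I would reduce the entire claim to producing a single positive contraction $M \in B(H)$, $0 \le M \le I$, satisfying $A = R^{1/2}MR^{1/2}$. Given such an $M$, any contraction $C$ with $C^*C = M$ (for instance $C := M^{1/2}$) satisfies $A = R^{1/2}C^*CR^{1/2}$, and because $A + B = R$ one automatically obtains
\[
B \;=\; R - A \;=\; R^{1/2}(I - C^*C)R^{1/2},
\]
so the second identity is free. The existence of $M$ is the Douglas factorization lemma applied to the self-adjoint pair $(A^{1/2}, R^{1/2})$: the hypothesis $A \le R$ rewrites as $A^{1/2}(A^{1/2})^{*} \le R^{1/2}(R^{1/2})^{*}$, so Douglas supplies a unique contraction $Z \in B(H)$ with $A^{1/2} = R^{1/2}Z$, $\ker Z = \ker A^{1/2}$, and $\overline{ran}(Z) \subset \overline{ran}(R^{1/2})$. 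Taking adjoints (using self-adjointness of $A^{1/2}$) gives $A^{1/2} = Z^{*}R^{1/2}$, whence $A = R^{1/2}(ZZ^{*})R^{1/2}$, and $M := ZZ^{*}$ is the required positive contraction.

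The uniqueness clause is then the standard polar-decomposition ambiguity: $C^*C$ is canonically pinned down by the Douglas step, but $C$ itself is determined only up to the partial-isometry factor in its polar decomposition $C = V|C|$, which is precisely the ``choice of the partial isometry in the Douglas factorization'' cited in the statement. The main, and really the only nontrivial, obstacle is the well-definedness of $Z$: one must check that $\ker R^{1/2} \subset \ker A^{1/2}$ so that the formal rule $Z(R^{1/2}x) := A^{1/2}x$ descends unambiguously to $ran(R^{1/2})$ and extends by continuity to the closure. This is exactly where the order hypothesis $A \le R$ enters, through the pointwise bound $\|A^{1/2}x\|^{2} = \langle x, Ax\rangle \le \langle x, Rx\rangle = \|R^{1/2}x\|^{2}$; everything else in the argument is functional calculus on $R$ and routine bookkeeping.
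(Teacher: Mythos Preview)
Your proof is correct and essentially the same as the paper's: part (1) is the identical direct computation, and for part (2) both you and the paper invoke Douglas' lemma on the inequality $A\le R$, the only cosmetic difference being that the paper factors directly as $A^{1/2}=CR^{1/2}$ and reads off $A=R^{1/2}C^{*}CR^{1/2}$, whereas you write $A^{1/2}=R^{1/2}Z$ and pass through $M=ZZ^{*}$. One small slip worth tidying: in your final paragraph the rule $Z(R^{1/2}x):=A^{1/2}x$ actually constructs the operator satisfying $ZR^{1/2}=A^{1/2}$, i.e.\ your $Z^{*}$ rather than $Z$; this is harmless here because the two Douglas factorizations are adjoint to one another and rest on the same estimate $\|A^{1/2}x\|\le\|R^{1/2}x\|$.
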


\begin{proof}
Fix a contraction $C\in B\left(H\right)$, and set 
\[
Q:=I-C^{*}C\ge0.
\]
By functional calculus, $Q^{1/2}\in B(H)_{+}$. For any $y\in H$,
\[
\left\Vert y\right\Vert ^{2}=\langle y,y\rangle=\langle y,\left(C^{*}C+Q\right)y\rangle=\Vert Q^{1/2}y\Vert^{2}+\left\Vert Cy\right\Vert ^{2}.
\]
Now fix $x\in H$ and take $y=R^{1/2}x$. Then 
\[
\Vert R^{1/2}x\Vert^{2}=\Vert Q^{1/2}R^{1/2}x\Vert^{2}+\Vert CR^{1/2}x\Vert^{2}.
\]
The first term can be written as 
\begin{align*}
\Vert Q^{1/2}R^{1/2}x\Vert^{2} & =\langle x,R^{1/2}QR^{1/2}x\rangle=\left\langle x,R_{1}x\right\rangle ,
\end{align*}
and similarly 
\[
\Vert CR^{1/2}x\Vert^{2}=\langle x,R^{1/2}C^{*}CR^{1/2}x\rangle=\left\langle x,Dx\right\rangle .
\]
On the other hand $\Vert R^{1/2}x\Vert^{2}=\left\langle x,Rx\right\rangle $.
Thus, for all $x\in H$, 
\begin{align*}
\left\langle x,Rx\right\rangle  & =\left\langle x,R_{1}x\right\rangle +\left\langle x,Dx\right\rangle \\
 & =\Vert\left(I-C^{*}C\right)^{1/2}R^{1/2}x\Vert^{2}+\Vert CR^{1/2}x\Vert^{2}.
\end{align*}
This shows $R=R_{1}+D$ in the sense of quadratic forms. Both $R_{1}$
and $D$ are of the form $T^{*}T$ for some $T\in B(H)$ (namely $T=Q^{1/2}R^{1/2}$
and $T=CR^{1/2}$), hence $R_{1},D\ge0$.

Conversely, suppose $A,B\in B\left(H\right)_{+}$ and $A+B=R$. Because
$B=R-A\ge0$, we have $A\le R$ in Loewner order. Applying Douglas'
factorization lemma \cite{MR203464} to the inequality $A\le R$,
there exists a contraction $C\in B(H)$ such that 
\[
A^{1/2}=CR^{1/2}.
\]
Then 
\[
A=A^{1/2}A^{1/2}=R^{1/2}C^{*}CR^{1/2},
\]
and consequently 
\[
B=R-A=R^{1/2}\left(I-C^{*}C\right)R^{1/2}.
\]

For uniqueness: if $C_{1},C_{2}$ are contractions with $R^{1/2}C^{*}_{i}C_{i}R^{1/2}=A$,
then 
\[
(C_{i}R^{1/2})^{*}(C_{i}R^{1/2})=A,\qquad i=1,2.
\]
Thus $C_{1}R^{1/2}$ and $C_{2}R^{1/2}$ have the same absolute value
$A^{1/2}$, so by polar decomposition they differ by a partial isometry
on $\overline{ran}(A^{1/2})\subseteq\overline{ran}(R^{1/2})$. Equivalently,
the ambiguity is exactly the choice of the partial isometry factor
in the Douglas representation $A^{1/2}=CR^{1/2}$. 
\end{proof}
\begin{cor}
\label{cor:b-4}The projection case $C=P$ gives the WR step 
\[
R_{1}=R^{1/2}(I-P)R^{1/2},\qquad D=R^{1/2}PR^{1/2}.
\]
\end{cor}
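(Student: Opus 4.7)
The plan is to derive the corollary as a direct specialization of \prettyref{thm:b-3}(1), using only the defining algebraic identities of an orthogonal projection. Throughout, let $P\in B(H)$ be an orthogonal projection, meaning $P=P^{*}=P^{2}$.

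First I would check that $P$ is a contraction, so that \prettyref{thm:b-3}(1) applies with $C=P$. This is standard: for any $x\in H$, $\|Px\|^{2}=\langle Px,Px\rangle=\langle x,P^{*}Px\rangle=\langle x,Px\rangle=\langle x,P^{2}x\rangle=\|Px\|^{2}\le\|x\|^{2}$ via Cauchy-Schwarz applied to $\langle x,Px\rangle\le\|x\|\,\|Px\|$; equivalently, $I-P\ge0$ since $P$ is a positive contraction. Hence $P$ qualifies as an admissible contraction in \prettyref{thm:b-3}(1).

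Next I would simplify the key factor $C^{*}C$ under the substitution $C=P$. Using $P^{*}=P$ and $P^{2}=P$, one has $C^{*}C=P^{*}P=P^{2}=P$, and consequently $I-C^{*}C=I-P$. Substituting into the formulas of \prettyref{thm:b-3}(1) immediately gives
\[
D=R^{1/2}C^{*}CR^{1/2}=R^{1/2}PR^{1/2},\qquad R_{1}=R^{1/2}(I-C^{*}C)R^{1/2}=R^{1/2}(I-P)R^{1/2},
\]
which are precisely the WR step formulas. Positivity of both operators and the quadratic-form identity $\langle x,Rx\rangle=\langle x,R_{1}x\rangle+\langle x,Dx\rangle$ are then inherited from \prettyref{thm:b-3}(1), and this identity is exactly the one-step version of the telescoping decomposition used in \prettyref{lem:b-1}.

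Since the corollary is a specialization, there is no real obstacle. The only conceptual point worth flagging is that the uniqueness clause of \prettyref{thm:b-3}(2) is trivially consistent here: in the projection case the Douglas factor is itself self-adjoint and idempotent, so the partial-isometry ambiguity reduces to the canonical choice $C=P$ on $\overline{\mathrm{ran}}(R^{1/2})$. This justifies treating $C=P$ as the distinguished representative and thereby identifies \prettyref{lem:b-1} as the projection instance of the general contraction framework of \prettyref{thm:b-3}.
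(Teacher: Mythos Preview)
Your proposal is correct and matches the paper's approach: the corollary is stated without proof in the paper precisely because it is the immediate specialization $C^{*}C=P^{*}P=P^{2}=P$ in \prettyref{thm:b-3}(1), which is exactly what you carry out. Your additional remarks on contractivity and the uniqueness clause are fine but not needed for the stated corollary.
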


Since \prettyref{thm:b-3} identifies contractions as the correct
objects parametrizing positive splittings, the next step is to iterate
these splittings exactly as in \prettyref{lem:b-1}. The resulting
recursion produces the same telescoping identities and strong-limit
remainder, but now with projections replaced by an arbitrary sequence
of contractions. We include this version for later use.
\begin{prop}
\label{prop:2-5}Let $R_{0}\in B(H)_{+}$ and let $\left\{ C_{j}\right\} ^{\infty}_{j=1}\subset B(H)$
be a sequence of contractions. Define recursively 
\begin{equation}
R_{j}:=R^{1/2}_{j-1}\left(I-C^{*}_{j}C_{j}\right)R^{1/2}_{j-1}\in B(H)_{+},\qquad j\ge1.
\end{equation}
Then for every $n\ge1$, 
\begin{equation}
R_{0}=R_{n}+\sum^{n}_{j=1}R^{1/2}_{j-1}C^{*}_{j}C_{j}R^{1/2}_{j-1}\quad\text{in }B\left(H\right).
\end{equation}
Moreover, there exists a unique $R_{\infty}\in B(H)_{+}$ such that
$R_{j}\xrightarrow{s}R_{\infty}$ and 
\begin{equation}
R_{0}\overset{s}{=}R_{\infty}+\sum^{\infty}_{j=1}R^{1/2}_{j-1}C^{*}_{j}C_{j}R^{1/2}_{j-1}.
\end{equation}
In particular, for all $x\in H$, 
\begin{equation}
\Vert R^{1/2}_{0}x\Vert^{2}=\Vert R^{1/2}_{\infty}x\Vert^{2}+\sum^{\infty}_{j=1}\Vert C_{j}R^{1/2}_{j-1}x\Vert^{2}.
\end{equation}
\end{prop}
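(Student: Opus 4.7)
The plan is to iterate the one-step splitting from \prettyref{thm:b-3} and then run exactly the monotone-tower argument of \prettyref{lem:b-1}, with orthogonal projections replaced by contractions. The proof is essentially a bookkeeping exercise once the correct one-step identity is in hand.

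First, I would fix $j\ge 1$ and apply \prettyref{thm:b-3}(1) with $R=R_{j-1}$ and $C=C_{j}$. This gives, for every $x\in H$,
\[
\langle x,R_{j-1}x\rangle=\langle x,R_{j}x\rangle+\langle x,R^{1/2}_{j-1}C^{*}_{j}C_{j}R^{1/2}_{j-1}x\rangle,
\]
so as an identity in $B(H)_+$ we have $R_{j-1}=R_{j}+R^{1/2}_{j-1}C^{*}_{j}C_{j}R^{1/2}_{j-1}$. Summing from $j=1$ to $n$ telescopes to the finite identity in the statement.

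Next, I would show $R_{j}\le R_{j-1}$ in Loewner order. Since $C_{j}$ is a contraction, $0\le I-C^{*}_{j}C_{j}\le I$, and conjugating by $R^{1/2}_{j-1}$ preserves operator inequalities, so $0\le R_{j}\le R_{j-1}\le R_{0}$. Thus $(R_{n})$ is a bounded Loewner-decreasing sequence in $B(H)_+$, hence strongly convergent to a unique $R_{\infty}\in B(H)_+$ (this is the same monotone-convergence fact in $B(H)_+$ invoked in the proof of \prettyref{lem:b-1}, cf. \cite{MR493419}). Rewriting the finite identity as
\[
\sum^{n}_{j=1}R^{1/2}_{j-1}C^{*}_{j}C_{j}R^{1/2}_{j-1}=R_{0}-R_{n}\xrightarrow{s}R_{0}-R_{\infty},
\]
yields the claimed strong series identity for $R_{0}$.

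For the scalar identity, I would simply evaluate the operator identity at a fixed $x\in H$, using $\langle x,R^{1/2}_{j-1}C^{*}_{j}C_{j}R^{1/2}_{j-1}x\rangle=\Vert C_{j}R^{1/2}_{j-1}x\Vert^{2}$ and $\langle x,R_{0}x\rangle=\Vert R^{1/2}_{0}x\Vert^{2}$, $\langle x,R_{\infty}x\rangle=\Vert R^{1/2}_{\infty}x\Vert^{2}$. Strong convergence of partial sums in $B(H)$ implies convergence of $\langle x,\cdot\,x\rangle$, so the scalar series converges to $\Vert R^{1/2}_{0}x\Vert^{2}-\Vert R^{1/2}_{\infty}x\Vert^{2}$.

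There is no real obstacle here beyond invoking \prettyref{thm:b-3} in place of the projection identity \eqref{eq:b-4}; the only point to verify carefully is the Loewner monotonicity $R_{j}\le R_{j-1}$, which uses the contraction hypothesis $C^{*}_{j}C_{j}\le I$ and the fact that $T\mapsto R^{1/2}_{j-1}TR^{1/2}_{j-1}$ is monotone and positive. Once that is in place, the proof of \prettyref{lem:b-1} applies verbatim.
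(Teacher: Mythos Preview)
Your proof is correct and follows essentially the same approach as the paper, which simply notes that the argument of \prettyref{lem:b-1} goes through verbatim once $P_{j}$ is replaced by $C^{*}_{j}C_{j}$ and one uses $0\le I-C^{*}_{j}C_{j}\le I$. Invoking \prettyref{thm:b-3}(1) for the one-step identity is equivalent to this substitution, and the rest of your argument (telescoping, Loewner monotonicity, SOT convergence via the monotone-tower fact, and the scalar evaluation) matches the paper's intended proof.
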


\begin{proof}
The argument is identical to the proof of \prettyref{lem:b-1}: replace
$P_{j}$ by $C^{*}_{j}C_{j}$ and note that $0\leq I-C^{*}_{j}C_{j}\leq I$.
It follows that $R_{j}$ is again bounded and decreasing in $B\left(H\right)_{+}$,
and therefore has an SOT limit. 
\end{proof}

\section{Energy trees and path measures}\label{sec:3}

In this section we set up the basic probabilistic notation and constructions
associated with iterated contraction splittings. The goal is to define
the energy tree, to specify cylinder weights (hence probabilities)
on the path space, and to apply Kolmogorov's extension theorem to
obtain the resulting path measures. One technical point is treated
differently from \cite{tian20261}: whenever the splitting mechanism
terminates at a node, we do not patch in an auxiliary probability
vector. Instead we adjoin a terminating symbol $0$ and declare it
absorbing, forcing the process to remain in the absorbing state once
termination occurs. This convention treats termination as an absorption
(stopping) event on the path space, and it will be used later when
we relate boundary disintegrations to stopped paths and survival probabilities.

\subsection{The energy tree}

Fix $m\ge1$, a Hilbert space $H$, an initial operator $R_{0}\in B\left(H\right)_{+}$,
and contractions $C_{1},\dots,C_{m}\in B\left(H\right)$. For each
$j\in\left\{ 1,\dots,m\right\} $ define the residual update 
\[
\Phi_{j}(R):=R^{1/2}\left(I-C^{*}_{j}C_{j}\right)R^{1/2}\in B\left(H\right)_{+}.
\]
Let $\mathcal{W}$ denote the set of finite words over $\left\{ 1,\dots,m\right\} $,
including the empty word $\emptyset$. For a word $w=w_{1}\cdots w_{n}\in\mathcal{W}$
we define the residual operator inductively by 
\begin{equation}
R_{\emptyset}:=R_{0},\qquad R_{wj}:=\Phi_{j}\left(R_{w}\right)\quad\text{for }j\in\left\{ 1,\dots,m\right\} .\label{eq:3-1}
\end{equation}
Thus each node $w$ has $m$ children $w1,\dots,wm$, and $R_{w}$
is the residual at that node. Associated to the edge $w\to wj$ we
have the dissipated piece 
\begin{equation}
D_{w,j}:=R_{w}-R_{wj}=R^{1/2}_{w}C^{*}_{j}C_{j}R^{1/2}_{w}\in B\left(H\right)_{+}.\label{eq:3-2}
\end{equation}
In particular, for each $w$ and $j$, the one-step splitting identity
holds: 
\begin{equation}
R_{w}=D_{w,j}+R_{wj}.\label{eq:3.3}
\end{equation}
We refer to the rooted $m$-ary tree equipped with the labels $w\mapsto R_{w}$
and $\left(w,j\right)\mapsto D_{w,j}$ as the \textit{contraction
energy tree} generated by $(R_{0},\left\{ C_{j}\right\} ^{m}_{j=1})$.

\subsection{Path space and absorption}

Let $\mathcal{A}:=\left\{ 0,1,\dots,m\right\} $, and set 
\begin{equation}
\Omega:=\mathcal{A}^{\mathbb{N}}.\label{eq:3-4}
\end{equation}
Elements $\omega=\left(\omega_{1},\omega_{2},\dots\right)\in\Omega$
are infinite sequences of symbols. The interpretation is that symbols
in $\left\{ 1,\dots,m\right\} $ represent genuine contraction choices,
while the symbol $0$ represents termination. We impose the absorbing
convention: once $0$ occurs, it persists. Formally, we will define
cylinder probabilities so that whenever a finite prefix contains $0$,
all mass is assigned to extending that prefix by $0$ again.

For a finite word $u=u_{1}\cdots u_{n}$ over $\mathcal{A}$, we write
$\left[u\right]\subset\Omega$ for the corresponding cylinder set
\[
\left[u\right]:=\left\{ \omega\in\Omega:\omega_{1}=u_{1},\dots,\omega_{n}=u_{n}\right\} .
\]

It is worth noting that the path space \eqref{eq:3-4} is different
from
\[
\left\{ 1,\dots,m\right\} ^{\mathbb{N}}\cup\left\{ \dagger\right\} 
\]
with a single adjoined cemetery point $\dagger$. The latter alternative
only distinguishes “survival” from “termination” whereas the absorbing
symbol model retains when termination occurs via the first index $\tau(\omega):=\inf\left\{ n\ge1:\omega_{n}=0\right\} \in\left\{ 1,2,\dots\right\} \cup\left\{ \infty\right\} $.

Equivalently, a finite word $w=w_{1}\cdots w_{n}\in\left\{ 1,\dots,m\right\} ^{n}$
is identified with the infinite sequence $w_{1}\cdots w_{n}000\cdots\in\mathcal{A}^{\mathbb{N}}$.
This convention keeps the cylinder set bookkeeping and Kolmogorov
consistency unchanged while treating termination intrinsically through
absorption.

\subsection{Energy-biased cylinders}

Fix $x\in H$ with $\left\langle x,R_{0}x\right\rangle >0$. For each
node $w\in\mathcal{W}$ define the scalar edge-energies 
\[
e_{w,j}\left(x\right):=\left\langle x,D_{w,j}x\right\rangle =\langle x,R^{1/2}_{w}C^{*}_{j}C_{j}R^{1/2}_{w}x\rangle,\qquad j\in\left\{ 1,\dots,m\right\} .
\]
Set 
\[
s_{w}\left(x\right):=\sum^{m}_{j=1}e_{w,j}\left(x\right)\in[0,\infty).
\]
We say that $w$ is \textit{dead for $x$} if $s_{w}\left(x\right)=0$
(thus all $e_{w,j}\left(x\right)$ vanish).

We now define a family of transition probabilities at each node. If
$s_{w}\left(x\right)>0$, set 
\begin{equation}
p^{\left(x\right)}_{w,j}:=\begin{cases}
0 & j=0,\\
\frac{e_{w,j}\left(x\right)}{s_{w}\left(x\right)} & j\in\left\{ 1,\dots,m\right\} .
\end{cases}\label{eq:3-5}
\end{equation}
If $s_{w}\left(x\right)=0$, set 
\begin{equation}
p^{\left(x\right)}_{w,j}:=\begin{cases}
1 & j=0,\\
0 & j\in\left\{ 1,\dots,m\right\} .
\end{cases}\label{eq:3-6}
\end{equation}
Finally, to enforce absorption, if a finite word $u$ over $\mathcal{A}$
already contains $0$, we declare 
\begin{equation}
p^{\left(x\right)}_{u,0}:=1,\qquad p^{\left(x\right)}_{u,j}:=0\quad\text{for }j\in\left\{ 1,\dots,m\right\} ,\label{eq:3-7}
\end{equation}
so that the process remains at $0$ forever after it terminates.

Define cylinder weights $\mu_{x}\left(\left[u\right]\right)$ recursively
by 
\[
\mu_{x}\left(\left[\emptyset\right]\right):=1,\qquad\mu_{x}\left(\left[ua\right]\right):=\mu_{x}\left(\left[u\right]\right)p^{\left(x\right)}_{u,a}\quad\text{for }a\in\mathcal{A}.
\]
By construction, for every finite word $u$ we have $\sum_{a\in\mathcal{A}}p^{\left(x\right)}_{u,a}=1$,
hence 
\[
\sum_{a\in\mathcal{A}}\mu_{x}\left(\left[ua\right]\right)=\mu_{x}\left(\left[u\right]\right).
\]
Therefore $\left\{ \mu_{x}\left(\left[u\right]\right)\right\} $ is
a consistent family of cylinder probabilities.
\begin{defn}
\label{def:3-1}The \textit{energy-biased path measure} $\nu_{x}$
is the unique probability measure on $\Omega$ whose values on cylinder
sets satisfy $\nu_{x}\left(\left[u\right]\right)=\mu_{x}\left(\left[u\right]\right)$
for all finite words $u$ over $\mathcal{A}$. (Existence and uniqueness
follow from Kolmogorov's extension theorem.) 
\end{defn}

\subsection{Trace-biased cylinders}

Assume now that $R_{0}$ is trace class. Then $R^{1/2}_{0}$ is Hilbert-Schmidt,
hence for every bounded operator $B\in B\left(H\right)$ the operator
$R^{1/2}_{0}BR^{1/2}_{0}$ is trace class. In particular, for each
$j\in\left\{ 1,\dots,m\right\} $ the residual update $\Phi_{j}\left(R_{0}\right)=R^{1/2}_{0}\left(I-C^{*}_{j}C_{j}\right)R^{1/2}_{0}$
is trace class. By induction on the length $\left|w\right|$, it follows
that every $R_{w}$ and every $D_{w,j}=R^{1/2}_{w}C^{*}_{j}C_{j}R^{1/2}_{w}$
is trace class. 

Define the trace edge-energies
\[
e^{\left(\mathrm{tr}\right)}_{w,j}:=\mathrm{tr}\left(D_{w,j}\right),\qquad s^{\left(\mathrm{tr}\right)}_{w}:=\sum^{m}_{j=1}e^{\left(\mathrm{tr}\right)}_{w,j}.
\]
The transition probabilities $p^{(\mathrm{tr})}_{w,a}$ are defined
exactly as in \eqref{eq:3-5}-\eqref{eq:3-7}, with $e_{w,j}\left(x\right)$
replaced by $e^{\left(\mathrm{tr}\right)}_{w,j}$. The same cylinder
recursion yields a consistent family of cylinder probabilities, hence
Kolmogorov's theorem produces a unique probability measure $\nu_{\mathrm{tr}}$
on $\Omega$.

This trace-biased choice is basis-free: it weights the outgoing edges
according to the amount of trace dissipated along that edge. In the
special case $\sum^{m}_{j=1}C^{*}_{j}C_{j}=I$, one has $s^{\left(\mathrm{tr}\right)}_{w}=\mathrm{tr}\left(R_{w}\right)$,
so the normalization is simply by the trace mass present at node $w$.
\begin{rem}
The energy-biased and trace-biased constructions are formally parallel
but conceptually different. The energy-biased weights $e_{w,j}\left(x\right)=\left\langle x,D_{w,j}x\right\rangle $
depend on a chosen vector $x$ and therefore encode how dissipation
is seen from the state $x$. By contrast, the trace-biased weights
$e^{\left(\mathrm{tr}\right)}_{w,j}=\mathrm{tr}\left(D_{w,j}\right)$
depend only on the operators and are unitarily invariant. We will
use both points of view: the $x$-biased measures naturally interface
with vector-level identities and disintegrations, while the trace-biased
measure provides an intrinsic averaging that is well-adapted to trace-class
hypotheses.
\end{rem}

\section{Pathwise telescoping and stopping times}\label{sec:4}

We now recast the deterministic telescoping identity along a single
infinite path. The only additional feature relative to the tree in
\prettyref{sec:3} is the absorbing symbol $0$, which gives a natural
stopping time.

Recall $\mathcal{A}=\left\{ 0,1,\dots,m\right\} $ and $\Omega=\mathcal{A}^{\mathbb{N}}$.
Let $\mathcal{A}^{\ast}$ be the set of finite words over $\mathcal{A}$,
including the empty word $\emptyset$. If $u=u_{1}\cdots u_{n}\in\mathcal{A}^{\ast}$,
we write $\left|u\right|=n$.

We first extend the residual labels $w\mapsto R_{w}$ (defined for
words $w\in\left\{ 1,\dots,m\right\} ^{\ast}$) to all finite words
over $\mathcal{A}$ by freezing at the first occurrence of $0$.
\begin{defn}
\label{def:d-1}If $u\in\mathcal{A}^{\ast}$ contains no $0$, then
$u\in\left\{ 1,\dots,m\right\} ^{\ast}$ and we keep the notation
$R_{u}$ from \eqref{eq:3-1}-\eqref{eq:3.3}. If $u$ contains a
$0$, write $u=v0u'$ where $v\in\left\{ 1,\dots,m\right\} ^{\ast}$
is the maximal prefix containing no $0$ (equivalently, the prefix
before the first $0$), and set 
\[
R_{u}:=R_{v}.
\]
\end{defn}

We also extend the dissipated pieces to the enlarged alphabet.
\begin{defn}
\label{def:d-2}For $u\in\mathcal{A}^{\ast}$ and $a\in\mathcal{A}$,
set 
\[
D_{u,a}:=\begin{cases}
R^{1/2}_{u}C^{*}_{a}C_{a}R^{1/2}_{u}, & a\in\left\{ 1,\dots,m\right\} \ \text{and }u\text{ contains no }0,\\[4pt]
0, & \text{otherwise}.
\end{cases}
\]
Then $D_{u,a}\in B\left(H\right)_{+}$ and, in particular, $D_{u,a}=0$
whenever $a=0$ or $u$ already contains a $0$. Thus after absorption
no further dissipation occurs, and the residual label remains frozen.
\end{defn}

With these conventions the one-step splitting identity holds uniformly.
\begin{lem}
\label{lem:d-3}For every $u\in\mathcal{A}^{\ast}$ and every $a\in\mathcal{A}$,
\[
R_{u}=D_{u,a}+R_{ua}.
\]
\end{lem}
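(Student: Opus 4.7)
My plan is to prove \prettyref{lem:d-3} by a straightforward case analysis driven by the two absorption conventions in \prettyref{def:d-1} and \prettyref{def:d-2}. The three cases to consider are (i) $u$ contains no $0$ and $a\in\{1,\dots,m\}$; (ii) $u$ contains no $0$ and $a=0$; (iii) $u$ already contains a $0$ (so $a\in\mathcal{A}$ is arbitrary). In each case both sides of the claimed identity unwind to the same operator in $B(H)_+$, and no new analytic input is required beyond the definitions plus the one-step splitting identity \eqref{eq:3.3} already established for words in $\{1,\dots,m\}^{\ast}$.

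For case (i), the word $ua$ lies in $\{1,\dots,m\}^{\ast}$, so both $R_{u}$ and $R_{ua}$ are the tree residuals given by \eqref{eq:3-1}, and $D_{u,a}=R_{u}^{1/2}C_a^{\ast}C_aR_u^{1/2}$ by \prettyref{def:d-2}. The desired equality is then exactly \eqref{eq:3.3}. For case (ii), \prettyref{def:d-2} forces $D_{u,0}=0$, while \prettyref{def:d-1} applied to the word $u0$ identifies its maximal $0$-free prefix as $u$ itself, so $R_{u0}=R_u$; the identity becomes the tautology $R_u=0+R_u$. For case (iii), write $u=v0u'$ with $v\in\{1,\dots,m\}^{\ast}$ the maximal $0$-free prefix. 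Then $u$ and $ua$ share the same maximal $0$-free prefix $v$, so $R_{u}=R_v=R_{ua}$ by \prettyref{def:d-1}, and $D_{u,a}=0$ by \prettyref{def:d-2} (regardless of $a\in\mathcal{A}$, since $u$ already contains a $0$); again the identity collapses to $R_v=0+R_v$.

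There is no genuine obstacle here; the lemma is essentially a bookkeeping consequence of the freezing conventions. The only point requiring a little care is to make sure the two definitions are mutually consistent at the precise moment absorption is triggered, i.e.\ that the prefix $u$ with no $0$ followed by $a=0$ in case (ii) is treated identically by both \prettyref{def:d-1} and \prettyref{def:d-2}; this is handled by observing that appending $0$ does not enlarge the maximal $0$-free prefix, which is why the residual label is left intact while the dissipated piece is declared $0$. With these three cases verified, the statement holds uniformly on $\mathcal{A}^{\ast}\times\mathcal{A}$, which is exactly what is needed for the pathwise telescoping in the remainder of \prettyref{sec:4}.
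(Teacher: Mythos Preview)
Your proof is correct and follows essentially the same three-case analysis as the paper's own proof, invoking \eqref{eq:3.3} for the live case and the freezing conventions of Definitions~\ref{def:d-1}--\ref{def:d-2} for the absorbed cases. The only difference is cosmetic: the paper merges your cases (ii) and (iii) slightly (treating ``$a=0$'' uniformly), but the content is identical.
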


\begin{proof}
If $u$ contains no $0$ and $a\in\left\{ 1,\dots,m\right\} $, this
is exactly the defining WR splitting from \eqref{eq:3-1}-\eqref{eq:3.3}.
If $a=0$, then $D_{u,0}=0$ and $R_{u0}=R_{u}$. If $u$ already
contains a $0$, then $R_{ua}=R_{u}$ by \prettyref{def:d-1} and
$D_{u,a}=0$ by \prettyref{def:d-2}.
\end{proof}
Now fix $\omega=\left(\omega_{1},\omega_{2},\dots\right)\in\Omega$,
and write $\omega|n:=\omega_{1}\cdots\omega_{n}\in\mathcal{A}^{\ast}$
for the length-$n$ prefix, with $\omega|0=\emptyset$.
\begin{defn}
The residual and dissipation processes along $\omega$ are $R_{n}\left(\omega\right):=R_{\omega|n}$,
$n\ge0$, and, for $n\ge1$, $D_{n}\left(\omega\right):=D_{\omega|n-1,\omega_{n}}$.
We also set 
\begin{equation}
\tau\left(\omega\right):=\inf\left\{ n\ge1:\omega_{n}=0\right\} \in\left\{ 1,2,\dots\right\} \cup\left\{ \infty\right\} \label{eq:4-1}
\end{equation}
for the (possibly infinite) termination time. 
\end{defn}

\begin{lem}
\label{lem:d-5}Let $\omega\in\Omega$ and set $\tau=\tau\left(\omega\right)$.
If $\tau<\infty$, then 
\[
R_{n}\left(\omega\right)=R_{\tau-1}\left(\omega\right)\quad\text{and}\quad D_{n}\left(\omega\right)=0\quad\text{for all }n\ge\tau.
\]
\end{lem}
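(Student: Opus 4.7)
The plan is to unfold the conventions in Definitions~\ref{def:d-1} and \ref{def:d-2} along the path $\omega$ and carry out a short case analysis on whether the prefix already contains the absorbing symbol. There is nothing dynamical happening here; the statement is essentially a consistency check that the freezing conventions on the extended alphabet are compatible with the pathwise notation $R_n(\omega), D_n(\omega)$.

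First I would observe that by the definition of $\tau = \tau(\omega)$ in \eqref{eq:4-1}, we have $\omega_\tau = 0$ and $\omega_k \in \{1,\dots,m\}$ for $1 \le k \le \tau-1$. Hence the prefix $\omega|\tau-1 \in \{1,\dots,m\}^{\ast}$ contains no $0$, while for every $n \ge \tau$ the prefix $\omega|n$ contains $\omega_\tau = 0$ and its maximal $0$-free initial segment is exactly $\omega|\tau-1$. Applying Definition~\ref{def:d-1} to $u = \omega|n$ with maximal $0$-free prefix $v = \omega|\tau-1$ yields
\[
R_n(\omega) = R_{\omega|n} = R_{\omega|\tau-1} = R_{\tau-1}(\omega),
\]
which is the first claim.

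For the second claim I would split $n \ge \tau$ into two sub-cases. If $n = \tau$, then $\omega_n = 0$, and Definition~\ref{def:d-2} directly gives $D_n(\omega) = D_{\omega|\tau-1, 0} = 0$. If $n > \tau$, then $n-1 \ge \tau$, so the prefix $\omega|n-1$ already contains a $0$, and the ``otherwise'' clause of Definition~\ref{def:d-2} again forces $D_n(\omega) = D_{\omega|n-1, \omega_n} = 0$ regardless of the value of $\omega_n$.

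The main ``obstacle,'' if one can call it that, is purely notational: one has to be careful that the two freezing conventions (for $R_u$ and for $D_{u,a}$) fit together so that the one-step splitting $R_u = D_{u,a} + R_{ua}$ of Lemma~\ref{lem:d-3} still holds at and past the termination time. But this has already been verified in Lemma~\ref{lem:d-3}, so the present lemma is a direct corollary of the definitions plus the elementary fact that the first hitting time of the absorbing symbol exhausts all occurrences of $0$ in every longer prefix.
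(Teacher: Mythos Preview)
Your proof is correct and essentially identical to the paper's own argument: both identify $\omega|\tau-1$ as the maximal $0$-free prefix of $\omega|n$ for $n\ge\tau$ and invoke Definition~\ref{def:d-1} for the residual, then split the dissipation claim into the cases $n=\tau$ (where $\omega_n=0$) and $n>\tau$ (where $\omega|n-1$ already contains a $0$) via Definition~\ref{def:d-2}. The only difference is cosmetic---your version is slightly more discursive---so there is nothing to add.
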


\begin{proof}
If $\tau<\infty$, then for every $n\ge\tau$ we can write $\omega|n=v0u$
with $v=\omega|\left(\tau-1\right)$, so $v$ is the prefix before
the first $0$. \prettyref{def:d-1} gives $R_{\omega|n}=R_{v}$.
For the dissipation, $D_{n}\left(\omega\right)=0$ by \prettyref{def:d-2}
because either $\omega_{n}=0$ (if $n=\tau$) or $\omega|n-1$ already
contains $0$ (if $n>\tau$). 
\end{proof}
The telescoping identity along $\omega$ is now straightforward.
\begin{prop}
\label{prop:d-6}For every $\omega\in\Omega$ and every $n\ge1$,
\[
R_{0}=R_{n}\left(\omega\right)+\sum^{n}_{k=1}D_{k}\left(\omega\right).
\]
\end{prop}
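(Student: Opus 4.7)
The statement is the pathwise telescoping identity, and it follows by a direct induction on $n$ using the one-step splitting already established in \prettyref{lem:d-3}. The plan is to reduce the entire identity to iterated applications of $R_u = D_{u,a} + R_{ua}$ at the specific words $u = \omega|k-1$, $a = \omega_k$, with no new operator-theoretic content needed.

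First, I would handle the base case $n=1$. By definition $R_0(\omega) = R_\emptyset = R_0$, and $R_1(\omega) = R_{\omega_1}$, $D_1(\omega) = D_{\emptyset,\omega_1}$. Applying \prettyref{lem:d-3} with $u = \emptyset \in \mathcal{A}^\ast$ and $a = \omega_1 \in \mathcal{A}$ yields
\[
R_0 = R_\emptyset = D_{\emptyset,\omega_1} + R_{\omega_1} = D_1(\omega) + R_1(\omega),
\]
which is the claim at $n=1$.

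Next, I would carry out the inductive step. Assume the identity at level $n$, i.e.\ $R_0 = R_n(\omega) + \sum_{k=1}^n D_k(\omega)$. Applying \prettyref{lem:d-3} at the word $u = \omega|n$ with $a = \omega_{n+1}$ gives
\[
R_n(\omega) = R_{\omega|n} = D_{\omega|n,\,\omega_{n+1}} + R_{\omega|(n+1)} = D_{n+1}(\omega) + R_{n+1}(\omega).
\]
Substituting into the inductive hypothesis produces
\[
R_0 = R_{n+1}(\omega) + \sum_{k=1}^{n+1} D_k(\omega),
\]
completing the induction.

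I do not expect any real obstacle here: the enlarged-alphabet bookkeeping (\prettyref{def:d-1} and \prettyref{def:d-2}) and \prettyref{lem:d-3} have been arranged precisely so that the one-step identity holds uniformly for every $u \in \mathcal{A}^\ast$ and every $a \in \mathcal{A}$, including the absorbing cases where $D_{u,a} = 0$ and $R_{ua} = R_u$. Consequently the induction works verbatim on absorbed paths as well as on non-absorbed ones: once $\tau(\omega) \le n$, \prettyref{lem:d-5} guarantees that the successive increments $D_k(\omega)$ vanish and $R_k(\omega)$ is frozen at $R_{\tau-1}(\omega)$, so the identity reduces to the finite decomposition obtained at step $\tau - 1$. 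The only subtle point worth flagging in the write-up is that the proof uses \prettyref{lem:d-3} in its full generality over $\mathcal{A}$, rather than the tree identity \eqref{eq:3.3} restricted to $\{1,\dots,m\}^\ast$.
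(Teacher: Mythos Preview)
Your proof is correct and is essentially the paper's own argument: the paper simply applies \prettyref{lem:d-3} with $u=\omega|k-1$, $a=\omega_k$ for $k=1,\dots,n$ and sums the resulting identities, which is exactly your induction unwound into a telescoping sum. Your additional remarks about the absorbed case and the use of \prettyref{lem:d-3} over the full alphabet $\mathcal{A}$ are accurate but not needed for the formal proof.
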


\begin{proof}
Apply \prettyref{lem:d-3} with $u=\omega|k-1$ and $a=\omega_{k}$
for $k=1,\dots,n$, and sum the resulting identities.
\end{proof}
In particular, the process is monotone in Loewner order.
\begin{cor}
\label{cor:4-7}For every $\omega\in\Omega$ and every $n\ge1$, 
\[
0\le R_{n}\left(\omega\right)\le R_{n-1}\left(\omega\right)\le\cdots\le R_{0}.
\]
\end{cor}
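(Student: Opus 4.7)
The plan is to derive the Loewner chain directly from the one-step splitting in \prettyref{lem:d-3}, without actually invoking the full telescoping identity of \prettyref{prop:d-6}.

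First, I would apply \prettyref{lem:d-3} with $u=\omega|k-1$ and $a=\omega_{k}$ for each $k\ge 1$, which gives the one-step identity
\[
R_{k-1}(\omega)=D_{k}(\omega)+R_{k}(\omega)
\]
in $B(H)_{+}$. Since $D_{k}(\omega)\in B(H)_{+}$ by \prettyref{def:d-2}, this immediately yields the one-step Loewner inequality $R_{k}(\omega)\le R_{k-1}(\omega)$. Chaining these inequalities for $k=1,\dots,n$ produces the descending chain asserted in the corollary.

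For the lower bound $R_{n}(\omega)\ge 0$, I would split into the two regimes of $n$ relative to $\tau(\omega)$. If $n<\tau(\omega)$, then $\omega|n\in\left\{1,\dots,m\right\}^{\ast}$ and $R_{\omega|n}$ is obtained by iterating the WR update $\Phi_{j}(R)=R^{1/2}(I-C_{j}^{*}C_{j})R^{1/2}$, which lies in $B(H)_{+}$ by \prettyref{thm:b-3}(1); iteration preserves positivity exactly as used in \prettyref{prop:2-5}. If $n\ge\tau(\omega)$, then \prettyref{lem:d-5} gives $R_{n}(\omega)=R_{\tau-1}(\omega)$, which is positive by the same argument applied to the shorter prefix.

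There is no substantial obstacle; the corollary is a direct bookkeeping consequence of the one-step splitting together with positivity of the edge dissipations $D_{w,j}$. The only point that merits a remark is that the absorbing symbol does not disrupt monotonicity: once $\omega_{k}=0$ or $\omega|k-1$ already contains a $0$, the corresponding one-step identity degenerates to $R_{k-1}(\omega)=0+R_{k}(\omega)$, so the chain merely remains constant from that index on, consistently with \prettyref{lem:d-5}.
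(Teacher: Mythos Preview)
Your proof is correct and essentially the same as the paper's: both obtain $R_{k-1}(\omega)=R_{k}(\omega)+D_{k}(\omega)$ with $D_{k}(\omega)\ge 0$ and read off the Loewner chain. The only cosmetic difference is that the paper cites \prettyref{prop:d-6} at $n$ and $n-1$ (whose difference recovers exactly the one-step identity), whereas you invoke \prettyref{lem:d-3} directly; your added case split for $R_{n}(\omega)\ge 0$ is more explicit than the paper, which takes positivity as built into the definition of $R_{u}$.
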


\begin{proof}
\prettyref{prop:d-6} with $n$ and $n-1$ gives $R_{n-1}\left(\omega\right)=R_{n}\left(\omega\right)+D_{n}\left(\omega\right)$
with $D_{n}\left(\omega\right)\ge0$. 
\end{proof}
\begin{prop}
\label{prop:d-8}For every $\omega\in\Omega$, the limit 
\[
R_{\infty}\left(\omega\right):={\rm s\text{-}lim}_{n\to\infty}R_{n}\left(\omega\right)
\]
exists in $B\left(H\right)_{+}$.
\end{prop}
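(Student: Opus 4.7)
The plan is to apply the monotone convergence theorem for bounded decreasing sequences of positive operators in the strong operator topology, exactly as in the last step of the proof of \prettyref{lem:b-1}. By \prettyref{cor:4-7}, for each fixed $\omega\in\Omega$ the sequence $\{R_{n}(\omega)\}$ is Loewner-decreasing and uniformly bounded above by $R_{0}$. So all the work is already done deterministically; I only need to record the standard consequence.

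More explicitly, I would first observe that for every $x\in H$ the scalar sequence $\langle x,R_{n}(\omega)x\rangle$ is nonincreasing and bounded below by $0$, hence convergent. By polarization the sesquilinear form $\langle x,R_{n}(\omega)y\rangle$ converges for each pair $x,y$, and the uniform bound $\|R_{n}(\omega)\|\le\|R_{0}\|$ together with the Banach-Steinhaus principle shows that the limiting form is bounded. Thus there is a unique $R_{\infty}(\omega)\in B(H)$, necessarily positive (as a WOT-limit of positives) and satisfying $0\le R_{\infty}(\omega)\le R_{0}$, such that $R_{n}(\omega)\xrightarrow{w}R_{\infty}(\omega)$.

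To upgrade to strong convergence, I would set $T_{n}:=R_{n}(\omega)-R_{\infty}(\omega)\ge0$ and use the standard positive-operator Cauchy-Schwarz estimate
\[
\|T_{n}x\|^{2}\le\|T_{n}\|\,\langle x,T_{n}x\rangle\le\|R_{0}\|\,\langle x,T_{n}x\rangle,
\]
where $\langle x,T_{n}x\rangle\downarrow0$ by monotone convergence of the quadratic form. Hence $T_{n}x\to0$ for every $x$, which is exactly $R_{n}(\omega)\xrightarrow{s}R_{\infty}(\omega)$. This is the same mechanism invoked in \prettyref{lem:b-1} via the reference \cite{MR493419}, so I could alternatively just cite that result directly.

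There is no real obstacle here: the only subtlety worth flagging is the distinction between WOT and SOT for monotone sequences, which is resolved cleanly by the positive-operator Cauchy-Schwarz inequality. The pathwise termination conventions from \prettyref{lem:d-5} do not require separate treatment, since after the absorbing time $\tau(\omega)$ the sequence is eventually constant and convergence is trivial on that branch.
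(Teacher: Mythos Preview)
Your proposal is correct and follows exactly the paper's own argument: invoke \prettyref{cor:4-7} for the Loewner-decreasing, bounded sequence and then apply the monotone convergence theorem for bounded selfadjoint operators (the paper simply cites \cite{MR493419}, while you spell out the WOT-to-SOT upgrade via the positive-operator Cauchy--Schwarz inequality). No gap and no genuinely different idea.
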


\begin{proof}
By \prettyref{cor:4-7}, the sequence $\left\{ R_{n}(\omega)\right\} _{n\ge0}$
is decreasing in Loewner order and satisfies $0\le R_{n}\left(\omega\right)\le R_{0}$
for all $n$. Hence $\left\{ R_{n}(\omega)\right\} $ converges in
the strong operator topology to a positive operator $R_{\infty}\left(\omega\right)\in B\left(H\right)_{+}$
by the monotone convergence theorem for bounded selfadjoint operators;
see \cite{MR493419}. 
\end{proof}
Combining finite telescoping with the strong limit gives the infinite
telescoping identity, with a finite truncation when the path terminates.
\begin{prop}
\label{prop:d-9}Let $\omega\in\Omega$ and $\tau=\tau\left(\omega\right)$.
If $\tau<\infty$, then $R_{\infty}\left(\omega\right)=R_{\tau-1}\left(\omega\right)$
and 
\[
R_{0}=R_{\infty}\left(\omega\right)+\sum^{\tau-1}_{k=1}D_{k}\left(\omega\right).
\]
If $\tau=\infty$, then 
\[
R_{0}\overset{s}{=}R_{\infty}\left(\omega\right)+\sum^{\infty}_{k=1}D_{k}\left(\omega\right),
\]
i.e., the partial sums $\sum^{n}_{k=1}D_{k}\left(\omega\right)$ converge
in SOT to $R_{0}-R_{\infty}\left(\omega\right)$.
\end{prop}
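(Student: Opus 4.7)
The plan is to combine the finite telescoping identity \prettyref{prop:d-6} with the pathwise monotone SOT limit \prettyref{prop:d-8}, splitting on whether $\tau=\tau(\omega)$ is finite. The identification of the limit in the absorbed case uses \prettyref{lem:d-5}, while the infinite case is a direct passage to the strong limit in the finite identity.

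First, in the case $\tau<\infty$, I would apply \prettyref{lem:d-5} to observe that $R_n(\omega)=R_{\tau-1}(\omega)$ for every $n\ge\tau-1$ and that $D_n(\omega)=0$ for every $n\ge\tau$. Thus the Loewner-decreasing sequence $\{R_n(\omega)\}$ is eventually constant, and the SOT limit produced by \prettyref{prop:d-8} must satisfy $R_\infty(\omega)=R_{\tau-1}(\omega)$. Applying \prettyref{prop:d-6} at level $n=\tau-1$ then gives
\[
R_0 = R_{\tau-1}(\omega) + \sum_{k=1}^{\tau-1} D_k(\omega) = R_\infty(\omega)+\sum_{k=1}^{\tau-1} D_k(\omega),
\]
which is the finite-sum assertion of the proposition. (Equivalently, one may take $n\ge\tau$ in \prettyref{prop:d-6}; the added terms $D_k(\omega)$ for $k\ge\tau$ all vanish by \prettyref{def:d-2}.)

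Next, in the case $\tau=\infty$, for each $n\ge1$ \prettyref{prop:d-6} rewrites as
\[
\sum_{k=1}^{n} D_k(\omega) = R_0 - R_n(\omega).
\]
By \prettyref{prop:d-8}, $R_n(\omega)\xrightarrow{s} R_\infty(\omega)$ in $B(H)_+$, and since subtraction is continuous in the strong operator topology, the right-hand side converges strongly to $R_0-R_\infty(\omega)$. Hence the partial sums on the left converge in SOT to the same limit, which is precisely the infinite telescoping statement
\[
R_0 \overset{s}{=} R_\infty(\omega)+\sum_{k=1}^{\infty} D_k(\omega).
\]

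There is no substantive obstacle here: the one-step splitting \prettyref{lem:d-3}, the Loewner monotonicity \prettyref{cor:4-7}, the pathwise monotone SOT limit \prettyref{prop:d-8}, and the finite telescoping \prettyref{prop:d-6} have all been established. The only convention to flag is that the infinite series is understood as the SOT limit of its partial sums rather than as a norm-convergent series, matching the usage already adopted in \prettyref{prop:2-5}; with this reading, both assertions of the proposition reduce to one line of bookkeeping on top of the earlier results.
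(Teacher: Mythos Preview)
Your proof is correct and follows essentially the same approach as the paper: apply \prettyref{lem:d-5} and \prettyref{prop:d-6} for the $\tau<\infty$ case, and pass to the SOT limit via \prettyref{prop:d-8} in the $\tau=\infty$ case. The only cosmetic difference is that the paper takes $n\ge\tau$ in \prettyref{prop:d-6} and drops the zero terms, whereas you take $n=\tau-1$ directly (and note the equivalence).
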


\begin{proof}
If $\tau<\infty$, \prettyref{lem:d-5} shows that $R_{n}\left(\omega\right)=R_{\tau-1}\left(\omega\right)$
and $D_{k}\left(\omega\right)=0$ for all $k\ge\tau$. Taking $n\ge\tau$
in \prettyref{prop:d-6} and dropping the zero terms yields the finite
identity. The strong limit must then equal $R_{\tau-1}\left(\omega\right)$.
If $\tau=\infty$, \prettyref{prop:d-6} gives 
\[
\sum^{n}_{k=1}D_{k}\left(\omega\right)=R_{0}-R_{n}\left(\omega\right),
\]
and taking $n\to\infty$ and using \prettyref{prop:d-8} yields SOT
convergence to $R_{0}-R_{\infty}\left(\omega\right)$. 
\end{proof}
\begin{cor}
\label{cor:d-10}For every $\omega\in\Omega$, $\tau=\tau\left(\omega\right)$,
and $x\in H$, 
\[
\left\langle x,R_{0}x\right\rangle =\left\langle x,R_{\infty}\left(\omega\right)x\right\rangle +\sum^{\tau-1}_{k=1}\left\langle x,D_{k}\left(\omega\right)x\right\rangle ,
\]
with the convention that if $\tau=\infty$ the sum runs over all $k\ge1$.
\end{cor}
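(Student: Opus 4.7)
The statement is the scalar quadratic-form version of the operator-level telescoping identity of \prettyref{prop:d-9}, so the plan is simply to evaluate that identity on the vector $x$. The core observation I would use is that for any fixed $x\in H$ the linear functional $T\mapsto\langle x,Tx\rangle$ on $B(H)$ is continuous in the strong operator topology: if $T_{n}\xrightarrow{s}T$, then $T_{n}x\to Tx$ in norm, hence $\langle x,T_{n}x\rangle\to\langle x,Tx\rangle$.

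With that in hand, I would split into the two cases already separated in \prettyref{prop:d-9}. In the case $\tau=\tau(\omega)<\infty$, \prettyref{prop:d-9} gives the \emph{finite} operator identity
\[
R_{0}=R_{\infty}(\omega)+\sum^{\tau-1}_{k=1}D_{k}(\omega)
\]
in $B(H)_{+}$; pairing both sides with $x$ on the left and right gives the claimed scalar identity with a finite sum. In the case $\tau=\infty$, \prettyref{prop:d-9} asserts that the partial sums $S_{n}:=\sum^{n}_{k=1}D_{k}(\omega)$ satisfy $S_{n}\xrightarrow{s}R_{0}-R_{\infty}(\omega)$. Applying the SOT-continuous functional $T\mapsto\langle x,Tx\rangle$ term by term yields
\[
\sum^{n}_{k=1}\langle x,D_{k}(\omega)x\rangle=\langle x,S_{n}x\rangle\;\longrightarrow\;\langle x,R_{0}x\rangle-\langle x,R_{\infty}(\omega)x\rangle,
\]
so the scalar series $\sum_{k\ge1}\langle x,D_{k}(\omega)x\rangle$ converges (its partial sums are monotone nondecreasing since $D_{k}(\omega)\ge0$) with the stated sum. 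Rearranging gives the corollary in the $\tau=\infty$ case.

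There is no real obstacle here: everything is a direct consequence of \prettyref{prop:d-9} together with the elementary fact that SOT convergence implies convergence of quadratic forms at a fixed vector. The only small point to make explicit in writing is the reconciliation of the two cases via the convention $D_{k}(\omega)=0$ for $k\ge\tau$ (\prettyref{lem:d-5}), which makes the truncation at $\tau-1$ in the finite case equivalent to summing over all $k\ge1$ with vanishing tail. This lets the two cases be stated uniformly, as in the corollary.
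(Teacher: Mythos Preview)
Your proof is correct and follows the same approach as the paper: apply \prettyref{prop:d-9} and pass to quadratic forms at the fixed vector $x$. The paper phrases the $\tau=\infty$ step as ``monotone convergence of the nonnegative series $\sum_{k}\langle x,D_{k}(\omega)x\rangle$,'' which is exactly your observation that the partial sums are nondecreasing and converge by SOT continuity of $T\mapsto\langle x,Tx\rangle$.
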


\begin{proof}
Use \prettyref{prop:d-9} and the monotone convergence of the nonnegative
series $\sum_{k}\left\langle x,D_{k}\left(\omega\right)x\right\rangle $. 
\end{proof}

\section{Extinction estimates}\label{sec:5}

Fix $\Omega=\left\{ 0,1,\dots,m\right\} ^{\mathbb{N}}$ and let $\tau\left(\omega\right)$
be the termination time from \eqref{eq:4-1}. Along $\omega\in\Omega$
we have the pathwise residual and dissipation processes $R_{n}\left(\omega\right)$
and $D_{n}\left(\omega\right)$, and the stopped telescoping identity
\[
R_{0}=R_{n}\left(\omega\right)+\sum^{n}_{k=1}D_{k}\left(\omega\right),\qquad n\ge1.
\]
In particular, $R_{n}\left(\omega\right)\downarrow R_{\infty}\left(\omega\right)$
in SOT, and if $\tau\left(\omega\right)<\infty$ then the sum truncates
at $\tau\left(\omega\right)-1$ (\prettyref{prop:d-9}).

\subsection{Energy-biased extinction}\label{subsec:5-1}

Fix $x\in H$ with $\left\langle x,R_{0}x\right\rangle >0$, and let
$\nu_{x}$ be the energy-biased path measure (\prettyref{def:3-1}).
For a node $w\in\left\{ 1,\dots,m\right\} ^{\ast}$, recall the edge
energies 
\[
e_{w,j}\left(x\right):=\left\langle x,D_{w,j}x\right\rangle ,\qquad s_{w}\left(x\right):=\sum^{m}_{j=1}e_{w,j}\left(x\right).
\]
The energy-biased transitions at node $w$ are $p^{\left(x\right)}_{w,j}=e_{w,j}\left(x\right)/s_{w}\left(x\right)$
when $s_{w}\left(x\right)>0$, and if $s_{w}\left(x\right)=0$ then
the chain terminates (probability $1$ on the symbol $0$); see \eqref{eq:3-5}-\eqref{eq:3-7}.

Define the scalar residual process 
\[
M_{n}\left(\omega\right):=\left\langle x,R_{n}\left(\omega\right)x\right\rangle ,\qquad n\ge0.
\]
Let $\mathcal{F}_{n}:=\sigma\left(\omega_{1},\dots,\omega_{n}\right)$
be the natural filtration. We impose the following quantitative leakage
hypothesis.
\begin{assumption}[uniform $x$-leakage]
\label{assu:f-1} There exists $\alpha\in(0,m]$ such that for every
$w\in\left\{ 1,\dots,m\right\} ^{\ast}$, 
\[
s_{w}\left(x\right)\ge\alpha\left\langle x,R_{w}x\right\rangle .
\]
\end{assumption}

Note that $\alpha\le m$ is forced by the definitions. Indeed, for
each node $w$ and each $j\in\left\{ 1,\dots,m\right\} $,
\[
e_{w,j}\left(x\right)=\left\langle x,D_{w,j}x\right\rangle =\langle R^{1/2}_{w}x,\left(C^{*}_{j}C_{j}\right)R^{1/2}_{w}x\rangle\le\langle R^{1/2}_{w}x,R^{1/2}_{w}x\rangle=\left\langle x,R_{w}x\right\rangle 
\]
Summing over $j$ gives $s_{w}(x)=\sum^{m}_{j=1}e_{w,j}(x)\le m\left\langle x,R_{w}x\right\rangle $.
In particular, the constant $c=1-\alpha/m$ appearing later (\prettyref{thm:f-4})
satisfies $c\in\left[0,1\right)$.

The next lemma is the core computation.
\begin{lem}[conditional dissipation identity]
\label{lem:f-2} Let $n\ge1$ and condition on the event $\left\{ \omega|n-1=w\right\} $,
where $w\in\left\{ 1,\dots,m\right\} ^{\ast}$. Then 
\[
\mathbb{E}_{\nu_{x}}\left[\left\langle x,D_{n}\left(\omega\right)x\right\rangle \mid\mathcal{F}_{n-1}\right]=\begin{cases}
{\displaystyle \frac{\sum^{m}_{j=1}e_{w,j}\left(x\right)^{2}}{s_{w}\left(x\right)},} & s_{w}\left(x\right)>0,\\[10pt]
0, & s_{w}\left(x\right)=0.
\end{cases}
\]
\end{lem}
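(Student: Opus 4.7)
The plan is to compute the conditional expectation directly from the Markov kernel that defines $\nu_{x}$ on cylinders. By construction (\prettyref{def:3-1} together with \eqref{eq:3-5}--\eqref{eq:3-7}), on the atom $\{\omega|n-1 = w\}$ with $w \in \{1,\dots,m\}^{\ast}$, the coordinate $\omega_{n}$ is distributed over $\mathcal{A} = \{0,1,\dots,m\}$ according to the one-step kernel $\{p^{(x)}_{w,a}\}_{a\in\mathcal{A}}$. Moreover, on this event $D_{n}(\omega) = D_{w,\omega_{n}}$, which by \prettyref{def:d-2} vanishes when $\omega_{n} = 0$ and equals $R_{w}^{1/2}C_{j}^{\ast}C_{j}R_{w}^{1/2}$ when $\omega_{n} = j \in \{1,\dots,m\}$; in particular $\langle x, D_{w,j}x\rangle = e_{w,j}(x)$ in the latter case.

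Given this kernel description, I would write, on the atom $\{\omega|n-1 = w\}$,
\[
\mathbb{E}_{\nu_{x}}\!\bigl[\langle x, D_{n}(\omega)x\rangle \,\big|\, \mathcal{F}_{n-1}\bigr] \;=\; \sum_{a\in\mathcal{A}} p^{(x)}_{w,a}\,\langle x, D_{w,a}x\rangle \;=\; \sum_{j=1}^{m} p^{(x)}_{w,j}\,e_{w,j}(x),
\]
where the $a=0$ term drops because $\langle x, D_{w,0}x\rangle = 0$.

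It then remains to split cases. If $s_{w}(x) > 0$, then \eqref{eq:3-5} gives $p^{(x)}_{w,j} = e_{w,j}(x)/s_{w}(x)$, and substitution yields $\sum_{j=1}^{m} e_{w,j}(x)^{2}/s_{w}(x)$, which is the stated value. If $s_{w}(x) = 0$, then every $e_{w,j}(x)$ equals zero (they are nonnegative with sum zero), so the sum over $j$ vanishes independently of which kernel is in force; in this branch \eqref{eq:3-6} puts all mass on $a = 0$, where the integrand is already zero, which again matches the stated value.

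There is no serious obstacle here — the statement is essentially bookkeeping around the energy-biased kernel. The only point that warrants some attention is the handling of the absorbing symbol: the $a=0$ branch contributes nothing because the dissipation label was defined to vanish there (\prettyref{def:d-2}), so the identity remains formally symmetric even at dead nodes, and the two-case split on $s_{w}(x)$ requires only that one avoid substituting $0/0$ in the degenerate branch.
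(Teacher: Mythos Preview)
Your proof is correct and follows essentially the same approach as the paper: both compute the conditional expectation by summing $p^{(x)}_{w,j}\,\langle x,D_{w,j}x\rangle$ over the one-step kernel and then split on whether $s_{w}(x)>0$. Your version is marginally tidier in that you include the $a=0$ term explicitly and let it drop out via $D_{w,0}=0$, whereas the paper simply treats the two cases separately from the outset.
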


\begin{proof}
If $s_{w}\left(x\right)=0$, the transition is forced to the absorbing
symbol $0$, and by \prettyref{sec:4}, the dissipation at that step
is $0$. If $s_{w}\left(x\right)>0$, then $\omega_{n}\in\left\{ 1,\dots,m\right\} $
with probabilities $p^{\left(x\right)}_{w,j}=e_{w,j}\left(x\right)/s_{w}\left(x\right)$.
On $\left\{ \omega|n-1=w,\ \omega_{n}=j\right\} $ we have $D_{n}\left(\omega\right)=D_{w,j}$,
hence 
\[
\mathbb{E}_{\nu_{x}}\left[\left\langle x,D_{n}x\right\rangle \mid\omega|n-1=w\right]=\sum^{m}_{j=1}p^{\left(x\right)}_{w,j}\left\langle x,D_{w,j}x\right\rangle =\sum^{m}_{j=1}\frac{e_{w,j}\left(x\right)}{s_{w}\left(x\right)}e_{w,j}\left(x\right).
\]
This is the stated formula.
\end{proof}
We now lower bound the right-hand side in a uniform way.
\begin{lem}
\label{lem:f-3}For every $w$, 
\[
\frac{\sum^{m}_{j=1}e_{w,j}\left(x\right)^{2}}{s_{w}\left(x\right)}\ge\frac{1}{m}s_{w}\left(x\right),
\]
with the convention that both sides are $0$ if $s_{w}\left(x\right)=0$.
\end{lem}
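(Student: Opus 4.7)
The inequality is a direct instance of the Cauchy--Schwarz (equivalently, the QM--AM) inequality applied to the nonnegative scalars $e_{w,1}(x),\dots,e_{w,m}(x)$. The plan is to dispose of the degenerate case first, then reduce the bound to a standard vector inequality in $\mathbb{R}^{m}$.

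\emph{Degenerate case.} If $s_{w}(x)=0$, then since each $e_{w,j}(x)=\langle x,D_{w,j}x\rangle\ge 0$, we have $e_{w,j}(x)=0$ for every $j$, so the numerator $\sum_{j} e_{w,j}(x)^{2}$ also vanishes. Under the stated convention both sides are $0$ and the inequality is trivial.

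\emph{Nondegenerate case.} Suppose $s_{w}(x)>0$. Let $a_{j}:=e_{w,j}(x)\ge 0$ for $j=1,\dots,m$. Applying the Cauchy--Schwarz inequality in $\mathbb{R}^{m}$ to the vectors $(a_{1},\dots,a_{m})$ and $(1,\dots,1)$ gives
\[
\Bigl(\sum_{j=1}^{m} a_{j}\Bigr)^{2}\;\le\;m\sum_{j=1}^{m} a_{j}^{2}.
\]
Substituting $\sum_{j} a_{j}=s_{w}(x)$ and dividing both sides by $m\,s_{w}(x)>0$ yields
\[
\frac{s_{w}(x)}{m}\;\le\;\frac{\sum_{j=1}^{m} e_{w,j}(x)^{2}}{s_{w}(x)},
\]
which is the claim.

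\textbf{Difficulty.} There is no real obstacle here; the statement is a one-line application of Cauchy--Schwarz and serves only as the bookkeeping lower bound that feeds \prettyref{lem:f-2} into the extinction estimate of \prettyref{thm:f-4}. The only thing worth flagging is that the convention in the degenerate case is genuinely needed (to avoid a $0/0$ on the left-hand side), but the nonnegativity of each $e_{w,j}(x)$ makes this automatic.
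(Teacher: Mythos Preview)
Your proof is correct and follows exactly the same approach as the paper: dispose of the degenerate case, then apply Cauchy--Schwarz to $(e_{w,1}(x),\dots,e_{w,m}(x))$ and $(1,\dots,1)$ and divide by $s_{w}(x)$. There is nothing to add.
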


\begin{proof}
If $s_{w}\left(x\right)=0$ there is nothing to prove. Otherwise,
Cauchy-Schwarz gives 
\[
\left(\sum^{m}_{j=1}e_{w,j}\left(x\right)\right)^{2}\le m\sum^{m}_{j=1}e_{w,j}\left(x\right)^{2},
\]
hence $\sum_{j}e_{w,j}\left(x\right)^{2}\ge\frac{1}{m}s_{w}\left(x\right)^{2}$.
Divide by $s_{w}\left(x\right)$.
\end{proof}
Now we can state the extinction theorem with full quantitative control.
\begin{thm}
\label{thm:f-4}Under \prettyref{assu:f-1}, with $c:=1-\alpha/m\in\left[0,1\right)$,
one has:
\begin{enumerate}
\item For every $n\ge0$, 
\[
\mathbb{E}_{\nu_{x}}\left[M_{n}\right]\le c^{n}M_{0}.
\]
\item $M_{n}\left(\omega\right)\to0$ for $\nu_{x}$-almost every $\omega$.
Equivalently, 
\[
\left\langle x,R_{\infty}\left(\omega\right)x\right\rangle =0\quad\text{for }\nu_{x}\text{-almost every }\omega.
\]
\end{enumerate}
\end{thm}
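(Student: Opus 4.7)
The plan is to convert the extinction statement into a one-step geometric contraction on the scalar residual $M_n$ and then iterate. The key inputs are already in place: \prettyref{lem:f-2} computes the conditional expected dissipation exactly, \prettyref{lem:f-3} supplies a clean lower bound by Cauchy--Schwarz, and \prettyref{assu:f-1} converts that bound into a multiple of $M_{n-1}$. So the whole argument reduces to threading these three ingredients through the pathwise telescoping identity.

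First I would use the one-step splitting $R_{n-1}(\omega)=R_n(\omega)+D_n(\omega)$ from \prettyref{lem:d-3} to write $M_n=M_{n-1}-\langle x,D_n(\omega)x\rangle$. Taking conditional expectation on $\mathcal{F}_{n-1}$ and applying \prettyref{lem:f-2} on the event $\{\omega|n-1=w\}$ yields
\[
\mathbb{E}_{\nu_x}\bigl[M_n\mid\mathcal{F}_{n-1}\bigr]=M_{n-1}-\frac{\sum_{j=1}^{m}e_{w,j}(x)^{2}}{s_w(x)}
\]
whenever $s_w(x)>0$. Then \prettyref{lem:f-3} bounds the subtracted quotient below by $s_w(x)/m$, and \prettyref{assu:f-1} gives $s_w(x)/m\ge(\alpha/m)\langle x,R_wx\rangle=(\alpha/m)M_{n-1}$. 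Combined, these produce the one-step contraction
\[
\mathbb{E}_{\nu_x}\bigl[M_n\mid\mathcal{F}_{n-1}\bigr]\le(1-\alpha/m)\,M_{n-1}=cM_{n-1}.
\]
On the absorbing branch where $s_w(x)=0$, \prettyref{assu:f-1} forces $M_{n-1}=0$ and \prettyref{lem:d-5} gives $D_n(\omega)=0$, so the inequality holds trivially there as well.

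For part (i) I would take unconditional expectation of the contraction and iterate the tower property: $\mathbb{E}_{\nu_x}[M_n]\le c\,\mathbb{E}_{\nu_x}[M_{n-1}]\le\cdots\le c^{n}M_0$. For part (ii) I would couple this with the pathwise monotonicity $M_n(\omega)\downarrow M_\infty(\omega):=\langle x,R_\infty(\omega)x\rangle\ge 0$ from \prettyref{cor:4-7}. Since $0\le M_\infty\le M_n$ pointwise, monotone (or dominated) convergence gives
\[
\mathbb{E}_{\nu_x}[M_\infty]=\lim_{n\to\infty}\mathbb{E}_{\nu_x}[M_n]\le\lim_{n\to\infty}c^{n}M_0=0,
\]
so $M_\infty=0$ $\nu_x$-almost surely, which is exactly the equivalent formulation of (ii).

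I do not expect any genuine obstacle: once \prettyref{lem:f-2} and \prettyref{lem:f-3} are in hand the argument is essentially a supermartingale computation. The only point that requires a moment of care is the absorbing-state bookkeeping when $s_w(x)=0$; handling it explicitly (as indicated above) is what guarantees the contraction inequality holds on \emph{every} event, so that the induction and the monotone-convergence step at the end both go through without a side condition on $\tau$.
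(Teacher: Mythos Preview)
Your argument is correct and matches the paper's proof exactly through the one-step contraction $\mathbb{E}_{\nu_x}[M_n\mid\mathcal{F}_{n-1}]\le cM_{n-1}$ and the iteration giving part~(1). Your explicit treatment of the absorbing case $s_w(x)=0$ is cleaner than the paper, which leaves that case implicit.

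For part~(2) you take a slightly different (and more elementary) route. The paper sets $Z_n:=c^{-n}M_n$, observes that $\{Z_n\}$ is a nonnegative supermartingale, invokes the supermartingale convergence theorem to get $Z_n\to Z_\infty<\infty$ a.s., and then multiplies back by $c^n\to0$ to conclude $M_n\to0$. You instead exploit the pathwise monotonicity $M_n\downarrow M_\infty\ge0$ from \prettyref{cor:4-7} and use dominated convergence together with $\mathbb{E}[M_n]\le c^nM_0$ to force $\mathbb{E}[M_\infty]=0$, hence $M_\infty=0$ a.s. Your version avoids the supermartingale convergence theorem entirely and is arguably the more natural argument here, since monotonicity of $M_n$ is already available; the paper's approach has the minor advantage that it would still go through without that monotonicity (only the conditional contraction is used), but in the present setting both lead to the same conclusion with no gap.
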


\begin{proof}
By \prettyref{sec:4} we have the pathwise identity 
\[
M_{n}=M_{n-1}-\left\langle x,D_{n}x\right\rangle ,\qquad n\ge1.
\]
An application of Lemmas \ref{lem:f-2} and \ref{lem:f-3} yields,
on $\left\{ \omega|n-1=w\right\} $, 
\[
\mathbb{E}_{\nu_{x}}\left[M_{n}\mid\mathcal{F}_{n-1}\right]=M_{n-1}-\mathbb{E}_{\nu_{x}}\left[\left\langle x,D_{n}x\right\rangle \mid\mathcal{F}_{n-1}\right]\le M_{n-1}-\frac{1}{m}s_{w}\left(x\right).
\]
By \prettyref{assu:f-1}, $s_{w}\left(x\right)\ge\alpha\left\langle x,R_{w}x\right\rangle =\alpha M_{n-1}$.
Hence 
\[
\mathbb{E}_{\nu_{x}}\left[M_{n}\mid\mathcal{F}_{n-1}\right]\le\left(1-\frac{\alpha}{m}\right)M_{n-1}=cM_{n-1}.
\]
Iterating expectations gives $\mathbb{E}\left[M_{n}\right]\le c^{n}M_{0}$. 

Define $Z_{n}:=c^{-n}M_{n}$. The previous inequality is equivalent
to 
\[
\mathbb{E}_{\nu_{x}}\left[Z_{n}\mid\mathcal{F}_{n-1}\right]\le Z_{n-1},
\]
so $\left\{ Z_{n}\right\} $ is a nonnegative supermartingale and
therefore converges $\nu_{x}$-almost surely to a finite limit $Z_{\infty}$.
(This is a standard convergence theorem; see, e.g., \cite{MR4226142,MR3930614}.)
Since $c^{n}\to0$, we have 
\[
0\le M_{n}=c^{n}Z_{n}\to0\cdot Z_{\infty}=0
\]
$\nu_{x}$-almost surely. Finally, $R_{n}\left(\omega\right)\to R_{\infty}\left(\omega\right)$
in SOT implies $M_{n}\left(\omega\right)\to\left\langle x,R_{\infty}\left(\omega\right)x\right\rangle $,
hence that limit is $0$ almost surely.
\end{proof}
A direct consequence is the energy identity along typical branches.
\begin{cor}
Under \prettyref{assu:f-1}, for $\nu_{x}$-almost every $\omega$,
\[
\left\langle x,R_{0}x\right\rangle =\sum^{\tau\left(\omega\right)-1}_{k=1}\left\langle x,D_{k}\left(\omega\right)x\right\rangle ,
\]
with the understanding that if $\tau\left(\omega\right)=\infty$ the
sum runs over all $k\ge1$.
\end{cor}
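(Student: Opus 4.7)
My plan is to combine two results already established in the paper: the deterministic pathwise telescoping identity of \prettyref{cor:d-10} and the almost-sure vanishing of the scalar residual from \prettyref{thm:f-4}(2). There is essentially no quantitative work left to do here; the corollary is a direct substitution.

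First I would invoke \prettyref{cor:d-10}, which gives, for every $\omega \in \Omega$ and with $\tau = \tau(\omega)$,
\[
\left\langle x,R_{0}x\right\rangle = \left\langle x,R_{\infty}\left(\omega\right)x\right\rangle + \sum_{k=1}^{\tau-1}\left\langle x,D_{k}\left(\omega\right)x\right\rangle,
\]
with the sum running over all $k \geq 1$ when $\tau = \infty$. This identity is purely deterministic and holds pointwise on all of $\Omega$; it carries no probabilistic content and does not require \prettyref{assu:f-1}.

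Next I would invoke \prettyref{thm:f-4}(2), which, under \prettyref{assu:f-1}, asserts that $\left\langle x,R_{\infty}\left(\omega\right)x\right\rangle = 0$ for $\nu_{x}$-almost every $\omega$. Intersecting this $\nu_{x}$-full-measure set with the (universally valid) telescoping identity and substituting eliminates the residual term, yielding the claimed energy identity on a $\nu_{x}$-full-measure set.

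The only obstacle, and it is a very minor one, is to confirm that the truncation conventions line up. Specifically, the statement of the corollary, the summation convention in \prettyref{cor:d-10}, and the convention $D_{k}(\omega) = 0$ for $k \geq \tau(\omega)$ built into \prettyref{def:d-2} must all refer to the same range of $k$. This is so by construction: for $\tau(\omega) < \infty$ the contribution from $k \geq \tau(\omega)$ is identically zero, and for $\tau(\omega) = \infty$ both sums run over all $k \geq 1$. No further bookkeeping is required, and the proof reduces to a one-line substitution on the event $\{\langle x, R_{\infty} x\rangle = 0\}$.
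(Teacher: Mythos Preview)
Your proposal is correct and matches the paper's proof exactly: invoke the deterministic identity of \prettyref{cor:d-10} and then substitute $\langle x,R_{\infty}(\omega)x\rangle=0$ from \prettyref{thm:f-4}(2). The paper's proof is the one-line version of what you wrote.
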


\begin{proof}
This is \prettyref{cor:d-10} with $\left\langle x,R_{\infty}\left(\omega\right)x\right\rangle =0$
from \prettyref{thm:f-4}.
\end{proof}

\subsection{Trace-biased extinction}\label{subsec:5-2}

Assume throughout this subsection that $R_{0}$ is trace class. Then
every residual $R_{w}$ and every dissipation $D_{w,j}$ is trace
class, and the trace-biased path measure $\nu_{\mathrm{tr}}$ from
\prettyref{sec:3} is well-defined.

For $w\in\left\{ 1,\dots,m\right\} ^{\ast}$, set 
\[
e^{\left(\mathrm{tr}\right)}_{w,j}:=\mathrm{tr}\left(D_{w,j}\right),\qquad s^{\left(\mathrm{tr}\right)}_{w}:=\sum^{m}_{j=1}e^{\left(\mathrm{tr}\right)}_{w,j}.
\]
Recall that the trace-biased transition at node $w$ is 
\[
p^{\left(\mathrm{tr}\right)}_{w,j}=\frac{e^{\left(\mathrm{tr}\right)}_{w,j}}{s^{\left(\mathrm{tr}\right)}_{w}}\quad\text{if }s^{\left(\mathrm{tr}\right)}_{w}>0,\qquad p^{\left(\mathrm{tr}\right)}_{w,0}=1\quad\text{if }s^{\left(\mathrm{tr}\right)}_{w}=0,
\]
and the absorbing symbol $0$ is used thereafter.

Along $\omega\in\Omega$, define as in \prettyref{sec:4}
\[
R_{n}\left(\omega\right)=R_{\omega|n},\qquad D_{n}\left(\omega\right)=D_{\omega|n-1,\omega_{n}},\qquad\tau\left(\omega\right)=\inf\left\{ n\ge1:\ \omega_{n}=0\right\} .
\]
Let $\mathcal{F}_{n}=\sigma\left(\omega_{1},\dots,\omega_{n}\right)$
be the natural filtration, and set 
\[
T_{n}\left(\omega\right):=\mathrm{tr}\left(R_{n}\left(\omega\right)\right),\qquad n\ge0.
\]
We impose the trace analog of leakage.
\begin{assumption}[uniform trace leakage]
\label{assu:f-6} There exists $\alpha\in\left(0,m\right]$ such
that for every $w\in\left\{ 1,\dots,m\right\} ^{\ast}$, 
\[
s^{\left(\mathrm{tr}\right)}_{w}\ge\alpha\,\mathrm{tr}\left(R_{w}\right).
\]
\end{assumption}

The next lemma is the conditional expectation computation in the trace-biased
model.
\begin{lem}
\label{lem:f-7}Let $n\ge1$ and condition on the event $\left\{ \omega|n-1=w\right\} $,
where $w\in\left\{ 1,\dots,m\right\} ^{\ast}$. Then 
\[
\mathbb{E}_{\nu_{\mathrm{tr}}}\left[\mathrm{tr}\left(D_{n}\left(\omega\right)\right)\mid\mathcal{F}_{n-1}\right]=\begin{cases}
{\displaystyle \frac{\sum^{m}_{j=1}\mathrm{tr}\left(D_{w,j}\right)^{2}}{s^{\left(\mathrm{tr}\right)}_{w}},} & s^{\left(\mathrm{tr}\right)}_{w}>0,\\[10pt]
0, & s^{\left(\mathrm{tr}\right)}_{w}=0.
\end{cases}
\]
\end{lem}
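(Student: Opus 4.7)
The plan is to mirror the argument used for \prettyref{lem:f-2}, substituting the linear functional $\mathrm{tr}(\cdot)$ on trace-class operators for the quadratic functional $x\mapsto\langle x,\cdot\, x\rangle$. Both functionals are positive and linear on $B(H)_+$ (restricted to $S_1(H)$), and that is all the earlier argument actually used; the trace-class hypothesis on $R_0$ from \prettyref{sec:3} guarantees every $D_{w,j}$ lies in $S_1(H)_+$, so $\mathrm{tr}(D_n(\omega))$ is a well-defined nonnegative random variable.

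First I would fix a cylinder atom $\{\omega|n{-}1=w\}$ of $\mathcal{F}_{n-1}$ and split into two cases according to whether $s^{(\mathrm{tr})}_w$ vanishes. In the degenerate case $s^{(\mathrm{tr})}_w=0$, the trace-biased transition rule from \eqref{eq:3-5}--\eqref{eq:3-7} (applied with the weights $e^{(\mathrm{tr})}_{w,j}$) forces $\omega_n=0$ $\nu_{\mathrm{tr}}$-almost surely on this atom, so $D_n(\omega)=D_{w,0}=0$ by \prettyref{def:d-2}, and the conditional expectation is $0$, matching the second line of the claim.

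In the generic case $s^{(\mathrm{tr})}_w>0$, the conditional law of $\omega_n$ on the atom $\{\omega|n{-}1=w\}$ is the discrete distribution with mass $p^{(\mathrm{tr})}_{w,j}=e^{(\mathrm{tr})}_{w,j}/s^{(\mathrm{tr})}_w$ at $j\in\{1,\dots,m\}$ and no mass at $0$. On each refined atom $\{\omega|n{-}1=w,\,\omega_n=j\}$, \prettyref{def:d-2} gives $D_n(\omega)=D_{w,j}$, hence $\mathrm{tr}(D_n(\omega))=\mathrm{tr}(D_{w,j})=e^{(\mathrm{tr})}_{w,j}$. Summing against the transition probabilities,
\[
\mathbb{E}_{\nu_{\mathrm{tr}}}\!\left[\mathrm{tr}(D_n)\mid\omega|n{-}1=w\right]=\sum_{j=1}^{m}\frac{e^{(\mathrm{tr})}_{w,j}}{s^{(\mathrm{tr})}_w}\cdot e^{(\mathrm{tr})}_{w,j}=\frac{\sum_{j=1}^{m}\mathrm{tr}(D_{w,j})^{2}}{s^{(\mathrm{tr})}_w},
\]
which is the first line of the claim. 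Since the atoms $\{\omega|n{-}1=w\}$, as $w$ ranges over $\mathcal{A}^{n-1}$, form a countable partition of $\Omega$ generating $\mathcal{F}_{n-1}$, this atomwise identity gives the conditional expectation as asserted.

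I do not expect any genuine obstacle: the only place where some care is required is in making sure the absorbing-symbol bookkeeping from \prettyref{sec:3} and the freezing conventions of Definitions~\ref{def:d-1}--\ref{def:d-2} interact correctly with $\mathcal{F}_{n-1}$-conditioning; but because each prefix $w$ determines an $\mathcal{F}_{n-1}$-atom and the dissipation at step $n$ is a deterministic function of $(w,\omega_n)$, this reduces to a finite discrete computation identical in form to that of \prettyref{lem:f-2}.
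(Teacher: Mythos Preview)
Your proof is correct and follows essentially the same approach as the paper: a two-case split on whether $s^{(\mathrm{tr})}_{w}$ vanishes, followed in the nondegenerate case by the discrete computation $\sum_{j}p^{(\mathrm{tr})}_{w,j}\,\mathrm{tr}(D_{w,j})$ using $D_{n}(\omega)=D_{w,j}$ on the refined atoms. If anything, you spell out the atomic structure of $\mathcal{F}_{n-1}$ and the absorbing-symbol bookkeeping slightly more explicitly than the paper does.
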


\begin{proof}
If $s^{\left(\mathrm{tr}\right)}_{w}=0$, then the transition is forced
to $0$ and, by the absorbed dissipation convention, the dissipation
at that step is $0$. If $s^{\left(\mathrm{tr}\right)}_{w}>0$, then
$\omega_{n}=j\in\left\{ 1,\dots,m\right\} $ with probabilities $p^{\left(\mathrm{tr}\right)}_{w,j}=\mathrm{tr}\left(D_{w,j}\right)/s^{\left(\mathrm{tr}\right)}_{w}$,
and on $\left\{ \omega|n-1=w,\ \omega_{n}=j\right\} $ we have $D_{n}\left(\omega\right)=D_{w,j}$.
Thus 
\[
\mathbb{E}_{\nu_{\mathrm{tr}}}\left[\mathrm{tr}\left(D_{n}\right)\mid\omega|n-1=w\right]=\sum^{m}_{j=1}p^{\left(\mathrm{tr}\right)}_{w,j}\mathrm{tr}\left(D_{w,j}\right)=\sum^{m}_{j=1}\frac{\mathrm{tr}\left(D_{w,j}\right)}{s^{\left(\mathrm{tr}\right)}_{w}}\mathrm{tr}\left(D_{w,j}\right).
\]
 
\end{proof}
\begin{lem}
\label{lem:f-8}For every $w$, 
\[
\frac{\sum^{m}_{j=1}\mathrm{tr}\left(D_{w,j}\right)^{2}}{s^{\left(\mathrm{tr}\right)}_{w}}\ge\frac{1}{m}s^{\left(\mathrm{tr}\right)}_{w},
\]
with the convention that both sides are $0$ if $s^{\left(\mathrm{tr}\right)}_{w}=0$.
\end{lem}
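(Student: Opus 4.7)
The statement is a direct trace-analog of \prettyref{lem:f-3}, and I would prove it by essentially the same Cauchy-Schwarz argument, only with the scalars $e_{w,j}(x)=\langle x,D_{w,j}x\rangle$ replaced by $\mathrm{tr}(D_{w,j})$. The plan is to reduce to the identity between arithmetic and quadratic means on the $m$-tuple of nonnegative reals $(\mathrm{tr}(D_{w,j}))_{j=1}^m$.

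First I would dispose of the degenerate case: if $s^{(\mathrm{tr})}_w=0$, then since each $D_{w,j}\ge0$ is trace class and $\mathrm{tr}(D_{w,j})\ge0$, the vanishing of the sum forces $\mathrm{tr}(D_{w,j})=0$ for every $j$, so both sides of the inequality equal $0$ under the stated convention. This step is essentially a bookkeeping remark.

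In the nontrivial case $s^{(\mathrm{tr})}_w>0$, I would apply the Cauchy-Schwarz inequality to the vectors $(1,\dots,1)$ and $(\mathrm{tr}(D_{w,1}),\dots,\mathrm{tr}(D_{w,m}))$ in $\mathbb{R}^m$, giving
\[
\bigl(s^{(\mathrm{tr})}_w\bigr)^2=\Bigl(\sum_{j=1}^{m}\mathrm{tr}(D_{w,j})\Bigr)^{2}\le m\sum_{j=1}^{m}\mathrm{tr}(D_{w,j})^{2}.
\]
Dividing both sides by $s^{(\mathrm{tr})}_w>0$ yields the desired bound. The key input ensuring this is legitimate is that $\mathrm{tr}(D_{w,j})$ is a well-defined nonnegative real number for each $j$, which is guaranteed by the trace-class hypothesis $R_0\in S_1(H)_+$ together with the observation from \prettyref{sec:3} that all $D_{w,j}$ inherit trace-class positivity from $R_0$.

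I do not expect any serious obstacle here: the only conceivable subtlety is making sure the scalars being summed are nonnegative reals, which is built into the trace-class setup, so the inequality is literally a Cauchy-Schwarz inequality on $\mathbb{R}^m$. This lemma is stated separately only because it plays exactly the same role in the trace-biased extinction argument that \prettyref{lem:f-3} plays in the vector-biased one, namely turning the conditional dissipation from \prettyref{lem:f-7} into a uniform lower bound of the form $\tfrac{1}{m}s^{(\mathrm{tr})}_w$, which will then be combined with \prettyref{assu:f-6} to produce a geometric contraction of $\mathbb{E}_{\nu_{\mathrm{tr}}}[T_n]$.
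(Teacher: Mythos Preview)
Your proof is correct and is exactly the argument the paper intends: the paper's own proof simply reads ``Similar to the proof of \prettyref{lem:f-3},'' and your Cauchy--Schwarz computation on the $m$-tuple $(\mathrm{tr}(D_{w,j}))_{j}$ is precisely that proof written out in full.
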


\begin{proof}
Similar to the proof of \prettyref{lem:f-3}.
\end{proof}
Now we can state the trace extinction theorem.
\begin{thm}[trace extinction and vanishing tail]
\label{thm:f-9} Under \prettyref{assu:f-6}, with $c:=1-\alpha/m\in\left[0,1\right)$,
one has:
\begin{enumerate}
\item For every $n\ge0$, 
\[
\mathbb{E}_{\nu_{\mathrm{tr}}}\left[T_{n}\right]\le c^{n}T_{0}.
\]
\item $T_{n}\left(\omega\right)\to0$ for $\nu_{\mathrm{tr}}$-almost every
$\omega$. In particular, 
\[
\mathrm{tr}\left(R_{\infty}\left(\omega\right)\right)=0\quad\text{for }\nu_{\mathrm{tr}}\text{-almost every }\omega,
\]
and therefore 
\[
R_{\infty}\left(\omega\right)=0\quad\text{for }\nu_{\mathrm{tr}}\text{-almost every }\omega.
\]
\end{enumerate}
\end{thm}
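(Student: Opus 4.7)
The plan is to mimic the proof of \prettyref{thm:f-4}, replacing the scalar quadratic residual $M_n=\langle x,R_n x\rangle$ by the trace residual $T_n=\mathrm{tr}(R_n)$. Taking traces in the pathwise telescoping identity $R_{n-1}=R_n+D_n$ from \prettyref{prop:d-6} gives the scalar recursion $T_n=T_{n-1}-\mathrm{tr}(D_n)$ along every path $\omega$. Conditioning on $\mathcal{F}_{n-1}$ on the event $\{\omega|n-1=w\}$ and applying \prettyref{lem:f-7} followed by \prettyref{lem:f-8} yields
\[
\mathbb{E}_{\nu_{\mathrm{tr}}}\!\left[T_n\mid\mathcal{F}_{n-1}\right]=T_{n-1}-\mathbb{E}_{\nu_{\mathrm{tr}}}\!\left[\mathrm{tr}(D_n)\mid\mathcal{F}_{n-1}\right]\le T_{n-1}-\frac{1}{m}\,s^{(\mathrm{tr})}_{w}.
\]
Since \prettyref{assu:f-6} supplies $s^{(\mathrm{tr})}_{w}\ge\alpha\,\mathrm{tr}(R_w)=\alpha\,T_{n-1}$, this becomes the one-step contraction $\mathbb{E}_{\nu_{\mathrm{tr}}}[T_n\mid\mathcal{F}_{n-1}]\le c\,T_{n-1}$, and iterating the tower property proves part~(1).

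For part~(2), when $c>0$ I would set $Z_n:=c^{-n}T_n$; the previous inequality is equivalent to $\mathbb{E}_{\nu_{\mathrm{tr}}}[Z_n\mid\mathcal{F}_{n-1}]\le Z_{n-1}$, so $(Z_n)$ is a nonnegative supermartingale and converges $\nu_{\mathrm{tr}}$-a.s.\ to a finite limit by the Doob convergence theorem. Since $c^n\to0$, this forces $T_n=c^n Z_n\to0$ $\nu_{\mathrm{tr}}$-a.s. The degenerate case $c=0$ (i.e., $\alpha=m$) follows directly from part~(1), which already gives $\mathbb{E}_{\nu_{\mathrm{tr}}}[T_n]=0$ for all $n\ge1$ and hence $T_n=0$ a.s.

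The only new step relative to \prettyref{thm:f-4}, and the chief mild obstacle, is the passage from $T_n\to0$ a.s.\ to $\mathrm{tr}(R_\infty)=0$ a.s.; strong-operator convergence in general does not preserve the trace. Here, however, the limit is monotone: for a fixed orthonormal basis $\{e_k\}$ the summands $\langle e_k,R_n(\omega)e_k\rangle$ are nonnegative, decreasing in $n$, and dominated by $\langle e_k,R_0 e_k\rangle$ with $\sum_k\langle e_k,R_0 e_k\rangle=T_0<\infty$. Dominated convergence on $\mathbb{N}$ with counting measure then gives $T_n(\omega)\to\mathrm{tr}(R_\infty(\omega))$ for every $\omega$. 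Combined with $T_n\to0$ a.s., this yields $\mathrm{tr}(R_\infty)=0$ $\nu_{\mathrm{tr}}$-a.s., and since a positive trace-class operator with vanishing trace is zero, $R_\infty=0$ $\nu_{\mathrm{tr}}$-a.s., completing the proof.
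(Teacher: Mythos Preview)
Your proof is correct and follows essentially the same approach as the paper: the telescoping recursion $T_n=T_{n-1}-\mathrm{tr}(D_n)$, the conditional bound via Lemmas~\ref{lem:f-7}--\ref{lem:f-8} and \prettyref{assu:f-6}, and the supermartingale $c^{-n}T_n$ are identical to the paper's argument (you are in fact slightly more careful in isolating the degenerate case $c=0$). The only variation is in the passage from $T_n\to0$ to $\mathrm{tr}(R_\infty)=0$: the paper uses an increasing sequence of finite-rank projections $P_k\uparrow I$ and the bound $\mathrm{tr}(P_kR_nP_k)\le\mathrm{tr}(R_n)$, whereas you invoke dominated convergence on $\mathbb{N}$ for the diagonal entries $\langle e_k,R_n e_k\rangle$ with dominating sequence $\langle e_k,R_0 e_k\rangle$; both arguments exploit $0\le R_n\le R_0\in S_1(H)$ in the same way and are equally valid.
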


\begin{proof}
Recall that
\[
R_{n}=R_{n-1}-D_{n},\qquad n\ge1,
\]
hence $T_{n}=T_{n-1}-\mathrm{tr}\left(D_{n}\right)$. Using Lemmas
\ref{lem:f-7}-\ref{lem:f-8}, we get 
\[
\mathbb{E}_{\nu_{\mathrm{tr}}}\left[T_{n}\mid\mathcal{F}_{n-1}\right]\le T_{n-1}-\frac{1}{m}s^{\left(\mathrm{tr}\right)}_{w}.
\]
Now \prettyref{assu:f-6} gives $s^{\left(\mathrm{tr}\right)}_{w}\ge\alpha\,\mathrm{tr}\left(R_{w}\right)=\alpha T_{n-1}$,
so 
\[
\mathbb{E}_{\nu_{\mathrm{tr}}}\left[T_{n}\mid\mathcal{F}_{n-1}\right]\le\left(1-\frac{\alpha}{m}\right)T_{n-1}=cT_{n-1}.
\]
Iteration gives part (1). 

For (2), define $U_{n}:=c^{-n}T_{n}$. Then 
\[
\mathbb{E}_{\nu_{\mathrm{tr}}}\left[U_{n}\mid\mathcal{F}_{n-1}\right]\le U_{n-1},
\]
so $\left\{ U_{n}\right\} $ is a nonnegative supermartingale and
converges $\nu_{\mathrm{tr}}$-almost surely to a finite limit. Since
$c^{n}\to0$, we have $T_{n}=c^{n}U_{n}\to0$ almost surely. 

Finally, for each $\omega$ we have $R_{n}(\omega)\xrightarrow{s}R_{\infty}(\omega)$
and $0\le R_{n}(\omega)\le R_{0}$. Since $R_{0}$ is trace class
and $0\le R_{n}(\omega)\le R_{0}$, it follows that each $R_{n}(\omega)$
and $R_{\infty}(\omega)$ is trace class. Fix $\omega$ and let $\left\{ P_{k}\right\} _{k\ge1}$
be an increasing sequence of finite-rank projections with $P_{k}\xrightarrow{s}I$.
For each $k$ we have 
\[
P_{k}R_{n}(\omega)P_{k}\xrightarrow[n\to\infty]{}P_{k}R_{\infty}(\omega)P_{k}
\]
in operator norm (since the range of $P_{k}$ is finite-dimensional),
hence 
\[
\mathrm{tr}\left(P_{k}R_{n}(\omega)P_{k}\right)\xrightarrow[n\to\infty]{}\mathrm{tr}\left(P_{k}R_{\infty}(\omega)P_{k}\right).
\]
On the other hand, 
\[
0\le\mathrm{tr}\left(P_{k}R_{n}(\omega)P_{k}\right)\le\mathrm{tr}\left(R_{n}(\omega)\right)\xrightarrow[n\to\infty]{}0,
\]
so $\mathrm{tr}\left(P_{k}R_{\infty}(\omega)P_{k}\right)=0$ for every
$k$. Since $R_{\infty}(\omega)\ge0$ is trace class, we have 
\[
\mathrm{tr}\left(R_{\infty}(\omega)\right)=\lim_{k\to\infty}\mathrm{tr}\left(P_{k}R_{\infty}(\omega)P_{k}\right)=0,
\]
and hence $R_{\infty}(\omega)=0$ (see e.g., \cite{MR2154153,MR493419}).
This holds for $\nu_{\mathrm{tr}}$-almost every $\omega$, which
proves (2).
\end{proof}
A useful corollary is the trace reconstruction identity along typical
branches.
\begin{cor}
Under \prettyref{assu:f-6}, for $\nu_{\mathrm{tr}}$-almost every
$\omega$, 
\[
\mathrm{tr}\left(R_{0}\right)=\sum^{\tau\left(\omega\right)-1}_{k=1}\mathrm{tr}\left(D_{k}\left(\omega\right)\right),
\]
with the understanding that if $\tau\left(\omega\right)=\infty$ the
sum runs over all $k\ge1$.
\end{cor}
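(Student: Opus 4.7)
The plan is to reduce the corollary to two ingredients already in hand: the pathwise telescoping identity of \prettyref{prop:d-9} and the trace extinction $R_{\infty}(\omega)=0$ $\nu_{\mathrm{tr}}$-a.s.\ from \prettyref{thm:f-9}. The entire argument takes place on the full $\nu_{\mathrm{tr}}$-measure set on which $R_{\infty}(\omega)=0$, which I fix at the outset.

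I would then split into the two cases built into the statement of \prettyref{prop:d-9}. In the finite case $\tau(\omega)<\infty$, that proposition already gives an exact operator identity $R_{0}=R_{\infty}(\omega)+\sum_{k=1}^{\tau(\omega)-1}D_{k}(\omega)$ among positive trace-class operators. Since $R_{0}\in S_{1}(H)_{+}$ and each $D_{k}(\omega)$ is trace class (see the setup of \prettyref{subsec:5-2}), applying the trace functional by linearity, and inserting $R_{\infty}(\omega)=0$, gives the desired identity in the finite case.

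For the infinite case $\tau(\omega)=\infty$, I would avoid taking the trace of an SOT limit and instead work at finite level. By \prettyref{prop:d-6}, for every $n\ge1$ one has the \emph{exact} operator identity $R_{0}=R_{n}(\omega)+\sum_{k=1}^{n}D_{k}(\omega)$ in $S_{1}(H)_{+}$, so by linearity of the trace on trace-class operators,
\[
\mathrm{tr}(R_{0})=\mathrm{tr}(R_{n}(\omega))+\sum_{k=1}^{n}\mathrm{tr}(D_{k}(\omega)).
\]
Since the $D_{k}(\omega)$ are positive, the partial sums on the right are nondecreasing. By \prettyref{thm:f-9}(2), $T_{n}(\omega)=\mathrm{tr}(R_{n}(\omega))\to0$ $\nu_{\mathrm{tr}}$-a.s., so letting $n\to\infty$ produces $\sum_{k=1}^{\infty}\mathrm{tr}(D_{k}(\omega))=\mathrm{tr}(R_{0})$, which is exactly the claimed identity under the convention $\tau(\omega)=\infty$.

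The main point where one has to be slightly careful is the potential temptation to take $\mathrm{tr}$ of the SOT-convergent series in \prettyref{prop:d-9}; the trace is not SOT-continuous on $B(H)_{+}$ in general. Using the finite-$n$ telescoping from \prettyref{prop:d-6} before passing to the limit circumvents this, and the trace extinction from \prettyref{thm:f-9} provides the quantitative input that forces the remainder $\mathrm{tr}(R_{n}(\omega))$ to vanish. Beyond this, the proof is essentially bookkeeping.
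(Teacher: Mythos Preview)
Your proof is correct and follows essentially the same route as the paper, which simply says ``Apply $\mathrm{tr}$ to the stopped telescoping identity and use $R_{\infty}(\omega)=0$ from \prettyref{thm:f-9}.'' Your version is more careful in that you work with the finite-level identity from \prettyref{prop:d-6} and pass to the limit via $T_{n}(\omega)\to 0$, thereby avoiding any appeal to SOT-continuity of the trace; this is a genuine technical point worth making explicit, but it does not constitute a different strategy.
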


\begin{proof}
Apply $\mathrm{tr}$ to the stopped telescoping identity and use $R_{\infty}\left(\omega\right)=0$
from \prettyref{thm:f-9}. 
\end{proof}
\begin{rem*}[Operator tail vs. quadratic-form tail]
 In Section \ref{subsec:5-1}, the argument yields $\left\langle x,R_{\infty}\left(\omega\right)x\right\rangle =0$,
$\nu_{x}$-a.s., for a fixed $x$, which does not by itself force
$R_{\infty}\left(\omega\right)=0$. By contrast, in the trace-biased
setting we obtain $R_{\infty}\left(\omega\right)=0$ $\nu_{\mathrm{tr}}$-a.s.
(since $R_{\infty}\left(\omega\right)\ge0$ is trace class and ${\rm tr}\left(R_{\infty}\left(\omega\right)\right)=0$).
Thus the trace-biased measure controls the full operator tail, not
just a single quadratic form. Accordingly, the two sections \ref{subsec:5-1}-\ref{subsec:5-2}
are parallel, but the trace-biased conclusion is strictly stronger.
\end{rem*}

\section{Radon-Nikodym densities and change of measure}\label{sec:6}

In this section we compare the two intrinsic path measures introduced
earlier: the energy-biased measures $\nu_{x}$ (depending on a starting
vector $x$) and the trace-biased measure $\nu_{\mathrm{tr}}$. The
basic observation is that, on cylinder events, these measures differ
by explicit stepwise likelihood ratios built from the corresponding
transition probabilities. We first identify the resulting Radon-Nikodym
derivative $d\nu_{x}/d\nu_{\mathrm{tr}}$ as a canonical martingale
limit. We then lift this scalar change of measure to an operator-valued
disintegration, producing an $S_{1}(H)_{+}$-valued density whose
$\nu_{\mathrm{tr}}$-integral recovers the initial trace-class datum.
\begin{notation*}
For $1\le p<\infty$ we write $S_{p}\left(H\right)$ for the Schatten
$p$-class on $H$, and $S_{p}\left(H\right)_{+}:=S_{p}\left(H\right)\cap B\left(H\right)_{+}$
for its positive cone. In particular, $S_{1}\left(H\right)$ is the
trace class and $S_{2}\left(H\right)$ is the Hilbert-Schmidt class.
\end{notation*}

\subsection{Scalar Radon-Nikodym densities}

In this subsection we show that the trace-biased measure $\nu_{\mathrm{tr}}$
dominates each energy-biased measure $\nu_{x}$, and we identify $d\nu_{x}/d\nu_{\mathrm{tr}}$
as the limit of the finite-level likelihood ratios on $\mathcal{F}_{n}$.

Assume $R_{0}$ is trace class. Fix $x\in H$ with $\left\langle x,R_{0}x\right\rangle >0$.
Recall that for each node $w\in\left\{ 1,\dots,m\right\} ^{\ast}$
we have 
\[
e_{w,j}\left(x\right):=\left\langle x,D_{w,j}x\right\rangle ,\qquad s_{w}\left(x\right):=\sum^{m}_{j=1}e_{w,j}\left(x\right),
\]
and 
\[
e^{\left(\mathrm{tr}\right)}_{w,j}:=\mathrm{tr}\left(D_{w,j}\right),\qquad s^{\left(\mathrm{tr}\right)}_{w}:=\sum^{m}_{j=1}e^{\left(\mathrm{tr}\right)}_{w,j}.
\]
The transition probabilities are 
\[
p^{\left(x\right)}_{w,j}=\frac{e_{w,j}\left(x\right)}{s_{w}\left(x\right)}\ \text{ when }s_{w}\left(x\right)>0,\qquad p^{\left(\mathrm{tr}\right)}_{w,j}=\frac{e^{\left(\mathrm{tr}\right)}_{w,j}}{s^{\left(\mathrm{tr}\right)}_{w}}\ \text{ when }s^{\left(\mathrm{tr}\right)}_{w}>0,
\]
and at dead nodes we terminate into the absorbing symbol $0$. Let
$\mathcal{F}_{n}=\sigma\left(\omega_{1},\dots,\omega_{n}\right)$
be the coordinate filtration on $\Omega$.
\begin{thm}
\label{thm:6-1}For every $x\in H$ with $\left\langle x,R_{0}x\right\rangle >0$,
one has $\nu_{x}\ll\nu_{\mathrm{tr}}$. Moreover, there exists an
$\mathcal{F}_{\infty}$-measurable function $\rho_{x}:\Omega\to\left[0,\infty\right)$
such that 
\[
\nu_{x}(E)=\int_{E}\rho_{x}\left(\omega\right)d\nu_{\mathrm{tr}}\left(\omega\right)
\]
\textup{for every Borel set $E\subset\Omega$, }and $\rho_{x}$ arises
as the almost sure limit of a canonical $\nu_{\mathrm{tr}}$-martingale.

More precisely, define on cylinders the likelihood ratio 
\[
\rho_{x,n}\left(\omega\right):=\frac{\nu_{x}\left(\left[\omega|n\right]\right)}{\nu_{\mathrm{tr}}\left(\left[\omega|n\right]\right)}
\]
whenever $\nu_{\mathrm{tr}}\left(\left[\omega|n\right]\right)>0$,
and set $\rho_{x,n}\left(\omega\right)=0$ otherwise. Then:
\begin{enumerate}
\item $\left\{ \rho_{x,n}\right\} _{n\ge0}$ is a nonnegative $\nu_{\mathrm{tr}}$-martingale
with $\mathbb{E}_{\nu_{\mathrm{tr}}}\left[\rho_{x,n}\right]=1$ for
all $n$. 
\item $\rho_{x,n}\to\rho_{x}$ $\nu_{\mathrm{tr}}$-almost surely and in
$L^{1}\left(\nu_{\mathrm{tr}}\right)$. 
\item For $\nu_{\mathrm{tr}}$-almost every $\omega$, $\rho_{x}\left(\omega\right)$
admits the multiplicative path expansion 
\[
\rho_{x}\left(\omega\right)=\prod^{\tau\left(\omega\right)-1}_{k=1}\frac{p^{\left(x\right)}_{\omega|k-1,\omega_{k}}}{p^{\left(\mathrm{tr}\right)}_{\omega|k-1,\omega_{k}}},
\]
with the convention that if $\tau\left(\omega\right)=\infty$ the
product runs over all $k\ge1$. 
\end{enumerate}
\end{thm}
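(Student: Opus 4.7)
The plan is to obtain the density $\rho_x$ as the pathwise almost-sure limit of the finite-level cylinder likelihood ratios $\rho_{x,n}$, and then upgrade the cylinder-level identification to $\mathcal{F}_\infty$ by a standard martingale argument. First, I would verify the level-$n$ absolute continuity $\nu_x|_{\mathcal{F}_n} \ll \nu_{\mathrm{tr}}|_{\mathcal{F}_n}$. Both restricted measures are purely atomic on the finite partition of $\Omega$ into length-$n$ cylinders, so this reduces to the cylinder implication $\nu_{\mathrm{tr}}([u]) = 0 \Rightarrow \nu_x([u]) = 0$. Unrolling along the prefixes of $u$ to locate the first step $k$ at which the one-step weight $p^{(\mathrm{tr})}_{u|k-1,\, u_k}$ vanishes, the crucial case is $u_k \in \{1, \dots, m\}$: vanishing of $p^{(\mathrm{tr})}_{u|k-1,\, u_k}$ means $\mathrm{tr}(D_{u|k-1,\,u_k}) = 0$, and since $D_{u|k-1,\,u_k} \in S_1(H)_+$, the trace-class positivity implication forces $D_{u|k-1,\,u_k} = 0$ as an operator. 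Hence $e_{u|k-1,\,u_k}(x) = \langle x,\, D_{u|k-1,\,u_k} x\rangle = 0$ and $p^{(x)}_{u|k-1,\, u_k} = 0$. The absorbed case $u_k = 0$ is handled consistently by the termination convention \eqref{eq:3-5}--\eqref{eq:3-7}. This step is exactly where $R_0 \in S_1(H)_+$ enters, and it shows that $\rho_{x,n}$ is the density of $\nu_x|_{\mathcal{F}_n}$.

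Next, I would verify that $\{\rho_{x,n}\}$ is a nonnegative $\nu_{\mathrm{tr}}$-martingale. The children $[ua]$ of any cylinder $[u]$ partition $[u]$, so the recursive identities $\nu_x([ua]) = \nu_x([u])\, p^{(x)}_{u,a}$ and $\nu_{\mathrm{tr}}([ua]) = \nu_{\mathrm{tr}}([u])\, p^{(\mathrm{tr})}_{u,a}$ yield the pointwise tower identity $\mathbb{E}_{\nu_{\mathrm{tr}}}[\rho_{x,n+1} \mid \mathcal{F}_n] = \rho_{x,n}$ on $\{\nu_{\mathrm{tr}}([u]) > 0\}$, and integration of the cylinder density identity at level $n$ over $\Omega$ gives $\mathbb{E}_{\nu_{\mathrm{tr}}}[\rho_{x,n}] = \nu_x(\Omega) = 1$. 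Doob's nonnegative martingale convergence theorem then produces the $\nu_{\mathrm{tr}}$-almost sure limit $\rho_x \in L^1(\nu_{\mathrm{tr}})$, giving (1) and the a.s. part of (2).

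To identify $\rho_x$ as $d\nu_x/d\nu_{\mathrm{tr}}$, I would introduce the candidate $\tilde\nu(A) := \int_A \rho_x\, d\nu_{\mathrm{tr}}$ on $\mathcal{F}_\infty$ and show it agrees with $\nu_x$ on the cylinder algebra; the Kolmogorov uniqueness theorem then extends the equality to all of $\mathcal{F}_\infty$. The crucial ingredient is $L^1$ convergence $\rho_{x,n} \to \rho_x$, which I would obtain via Scheffé from the equality $\int \rho_{x,n} d\nu_{\mathrm{tr}} = 1 = \int \rho_x d\nu_{\mathrm{tr}}$: the right-hand equality comes from the cylinder identification at $A = \Omega$ together with a Fatou/monotone passage. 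Once $L^1$ convergence is in hand, the level-$n$ identity $\int_A \rho_{x,n} d\nu_{\mathrm{tr}} = \nu_x(A)$ for $A \in \mathcal{F}_n$ passes to the limit and yields $\tilde\nu = \nu_x$ on cylinders, proving $\nu_x \ll \nu_{\mathrm{tr}}$ with density $\rho_x$.

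Finally, the multiplicative expansion in (3) is immediate from the cylinder recursion: on $\{\nu_{\mathrm{tr}}([\omega|n]) > 0\}$ one has $\rho_{x,n}(\omega) = \prod_{k=1}^n p^{(x)}_{\omega|k-1,\, \omega_k} / p^{(\mathrm{tr})}_{\omega|k-1,\, \omega_k}$, and once absorption has occurred ($k \geq \tau(\omega)$) both transition weights equal $1$, so every factor past $\tau(\omega) - 1$ is trivial and the product truncates at $k = \tau(\omega)-1$. The main obstacle in the plan is the $L^1$/Scheffé step of the identification: establishing $\int \rho_x\, d\nu_{\mathrm{tr}} = 1$ requires the cylinder implication from the first step to propagate coherently to the tail, and this is precisely where the trace-class positivity implication $\mathrm{tr}(D) = 0 \Rightarrow D = 0$ must be reconciled with the asymmetric treatment of the absorbing symbol in the $x$-biased versus trace-biased rules.
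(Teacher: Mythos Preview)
Your plan follows the paper's argument almost step for step: (i) cylinder-level absolute continuity via the implication $\mathrm{tr}(D)=0\Rightarrow D=0$ for $D\in S_1(H)_+$; (ii) the likelihood-ratio martingale identity from the cylinder recursion; (iii) a.s.\ and $L^1$ convergence and identification of the density; (iv) the truncated product formula from absorption. For step (iii) the paper simply asserts that the density martingale $\{\rho_{x,n}\}$ is uniformly integrable (citing standard references), whereas you aim for the equivalent conclusion $\int\rho_x\,d\nu_{\mathrm{tr}}=1$ via Scheff\'e; these are logically the same hurdle, and neither route adds content beyond the other.

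Your closing worry about the ``asymmetric treatment of the absorbing symbol'' is not a side issue but the actual gap, and the paper's Step~1 glosses over exactly the same point. The reduction ``if $u$ contains a $0$, both measures force absorption after the first $0$'' ignores the transition \emph{into} the first $0$: if at some node $w\in\{1,\dots,m\}^*$ with $\nu_x([w])>0$ one has $s_w(x)=0$ while $s^{(\mathrm{tr})}_w>0$, then $p^{(x)}_{w,0}=1$ but $p^{(\mathrm{tr})}_{w,0}=0$, so $\nu_x([w0])=\nu_x([w])>0$ while $\nu_{\mathrm{tr}}([w0])=0$, and cylinder-level absolute continuity already fails. Nothing in the standing hypotheses excludes this (take $H=\mathbb{C}^2$, $m=1$, $R_0=I$, $C_1$ the rank-one projection onto $e_1$, and $x=e_2$: then $\langle x,R_0x\rangle=1$, yet $\nu_x$ absorbs immediately while $\nu_{\mathrm{tr}}$ never does). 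Neither your Scheff\'e step nor the paper's uniform-integrability citation can close this; one needs an additional hypothesis ruling out $x$-dead but trace-alive nodes along $\nu_x$-accessible words, or a modified statement.
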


\begin{proof}
\textbf{Step 1. }To see absolute continuity, it suffices to verify
$\nu_{x}\ll\nu_{\mathrm{tr}}$ on each $\mathcal{F}_{n}$, equivalently
that $\nu_{\mathrm{tr}}\left(\left[u\right]\right)=0\Rightarrow\nu_{x}\left(\left[u\right]\right)=0$
for every length-$n$ cylinder $\left[u\right]$. Fix a cylinder $[u]$
with $u\in\mathcal{A}^{\ast}$. If $u$ contains a $0$, then both
measures force absorption after the first $0$, and the cylinder probability
is determined by the transitions up to that time; so we reduce to
the case where $u\in\left\{ 1,\dots,m\right\} ^{n}$.

Write $u=u_{1}\cdots u_{n}$. If $\nu_{\mathrm{tr}}\left(\left[u\right]\right)=0$,
then at some step $k$ the trace transition along $u_{k}$ vanishes:
\[
p^{\left(\mathrm{tr}\right)}_{u_{1}\cdots u_{k-1},u_{k}}=0.
\]
If $s^{\left(\mathrm{tr}\right)}_{u_{1}\cdots u_{k-1}}=0$, then the
trace process would terminate at that node (probability $1$ on the
symbol $0$), hence any genuine child $u_{k}\in\left\{ 1,\dots,m\right\} $
has probability $0$, as required. Otherwise $s^{\left(\mathrm{tr}\right)}_{u_{1}\cdots u_{k-1}}>0$
and 
\[
p^{\left(\mathrm{tr}\right)}_{u_{1}\cdots u_{k-1},u_{k}}=\frac{\mathrm{tr}\left(D_{u_{1}\cdots u_{k-1},u_{k}}\right)}{s^{\left(\mathrm{tr}\right)}_{u_{1}\cdots u_{k-1}}}=0,
\]
so $\mathrm{tr}\left(D_{u_{1}\cdots u_{k-1},u_{k}}\right)=0$. Since
$D_{u_{1}\cdots u_{k-1},u_{k}}\ge0$ is trace class, $\mathrm{tr}\left(D\right)=0$
implies $D=0$. Hence 
\[
e_{u_{1}\cdots u_{k-1},u_{k}}\left(x\right)=\left\langle x,D_{u_{1}\cdots u_{k-1},u_{k}}x\right\rangle =0,
\]
and therefore the energy transition $p^{\left(x\right)}_{u_{1}\cdots u_{k-1},u_{k}}$
is also $0$. This forces $\nu_{x}\left(\left[u\right]\right)=0$.
Thus $\nu_{x}\ll\nu_{\mathrm{tr}}$.

\textbf{Step 2.}\textit{ }We show that, for each $n$, $\rho_{x,n}$
is $\mathcal{F}_{n}$-measurable and nonnegative. Let $u\in\mathcal{A}^{n}$
be a length-$n$ word, and assume $\nu_{\mathrm{tr}}\left(\left[u\right]\right)>0$.
For any $a\in\mathcal{A}$, 
\[
\nu_{\mathrm{tr}}\left(\left[ua\right]\right)=\nu_{\mathrm{tr}}\left(\left[u\right]\right)p^{\left(\mathrm{tr}\right)}_{u,a},\qquad\nu_{x}\left(\left[ua\right]\right)=\nu_{x}\left(\left[u\right]\right)p^{\left(x\right)}_{u,a},
\]
by the cylinder recursion from \prettyref{sec:3}. Therefore, on the
cylinder $\left[ua\right]$, 
\[
\rho_{x,n+1}=\frac{\nu_{x}\left(\left[ua\right]\right)}{\nu_{\mathrm{tr}}\left(\left[ua\right]\right)}=\frac{\nu_{x}\left(\left[u\right]\right)}{\nu_{\mathrm{tr}}\left(\left[u\right]\right)}\cdot\frac{p^{\left(x\right)}_{u,a}}{p^{\left(\mathrm{tr}\right)}_{u,a}}=\rho_{x,n}\cdot\frac{p^{\left(x\right)}_{u,a}}{p^{\left(\mathrm{tr}\right)}_{u,a}},
\]
with the understanding that if $p^{\left(\mathrm{tr}\right)}_{u,a}=0$
then also $p^{\left(x\right)}_{u,a}=0$ by Step 1, and we take the
ratio to be $0$. Now compute conditional expectation given $\mathcal{F}_{n}$.
On $\left\{ \omega|n=u\right\} $, 
\[
\mathbb{E}_{\nu_{\mathrm{tr}}}\left[\rho_{x,n+1}\mid\mathcal{F}_{n}\right]=\sum_{a\in\mathcal{A}}\rho_{x,n}\cdot\frac{p^{\left(x\right)}_{u,a}}{p^{\left(\mathrm{tr}\right)}_{u,a}}\cdot p^{\left(\mathrm{tr}\right)}_{u,a}=\rho_{x,n}\sum_{a\in\mathcal{A}}p^{\left(x\right)}_{u,a}=\rho_{x,n}.
\]
Thus $\left\{ \rho_{x,n}\right\} $ is a $\nu_{\mathrm{tr}}$-martingale.
(This is the usual likelihood-ratio (change-of-measure) martingale
associated with the finite-level Radon-Nikodym derivatives; see \cite{MR4226142,MR1155402}.)
Since $\rho_{x,0}=1$, taking expectations yields $\mathbb{E}_{\nu_{\mathrm{tr}}}\left[\rho_{x,n}\right]=1$
for all $n$.

\textbf{Step 3.} By the martingale convergence theorem, $\rho_{x,n}$
converges $\nu_{\mathrm{tr}}$-almost surely to a limit $\rho_{x}\in L^{1}\left(\nu_{\mathrm{tr}}\right)$.
Moreover, since $\rho_{x,n}=d\nu_{x}|_{\mathcal{F}_{n}}/d\nu_{\mathrm{tr}}|_{\mathcal{F}_{n}}$
is a density martingale, $\{\rho_{x,n}\}$ is uniformly integrable,
hence $\rho_{x,n}\to\rho_{x}$ in $L^{1}\left(\nu_{\mathrm{tr}}\right)$
(see \cite{MR4226142,MR2267655}). Finally, for any cylinder set $[u]\in\mathcal{F}_{n}$,
\begin{align*}
\int_{\left[u\right]}\rho_{x,n}\,d\nu_{\mathrm{tr}} & =\sum_{v\in\mathcal{A}^{n},\:\left[v\right]\subset\left[u\right]}\int_{\left[v\right]}\rho_{x,n}\,d\nu_{\mathrm{tr}}\\
 & =\sum_{v\in\mathcal{A}^{n},\:\left[v\right]\subset\left[u\right]}\left(\frac{\nu_{x}\left(\left[v\right]\right)}{\nu_{\mathrm{tr}}\left(\left[v\right]\right)}\right)\nu_{\mathrm{tr}}\left(\left[v\right]\right)\\
 & =\sum_{\substack{v\in\mathcal{A}^{n},\:u\prec v}
}\nu_{x}\left(\left[v\right]\right)=\nu_{x}\left(\left[u\right]\right).
\end{align*}
Letting $n\to\infty$ and using $L^{1}$ convergence gives $\nu_{x}\left(\left[u\right]\right)=\int_{\left[u\right]}\rho_{x}\,d\nu_{\mathrm{tr}}$
for every cylinder $\left[u\right]$. Since cylinders generate the
Borel $\sigma$-algebra and both sides define measures, the identity
extends to all Borel sets $E$.

\textbf{Step 4.} The cylinder recursion in Step 2 iterated along $\omega$
yields 
\[
\rho_{x,n}\left(\omega\right)=\prod^{n}_{k=1}\frac{p^{\left(x\right)}_{\omega|k-1,\omega_{k}}}{p^{\left(\mathrm{tr}\right)}_{\omega|k-1,\omega_{k}}},
\]
and by absorption all factors are $1$ once $\omega_{k}=0$. Hence
the product truncates at $\tau\left(\omega\right)-1$, giving the
stated formula for $\rho_{x}$ as the almost sure limit.
\end{proof}
The measures $\nu_{x}$ and $\nu_{\mathrm{tr}}$ are both intrinsic
to the same energy tree, but they arise from different bias functionals
and are therefore a priori unrelated. \prettyref{thm:6-1} shows that
once $R_{0}$ is trace class, $\nu_{\mathrm{tr}}$ dominates the entire
family $\left\{ \nu_{x}\right\} $, so one may treat $\nu_{\mathrm{tr}}$
as a reference measure and express $\nu_{x}$ via the density $\rho_{x}=d\nu_{x}/d\nu_{\mathrm{tr}}$.
This density is not chosen separately; it is the canonical martingale
limit of the cylinder likelihood ratios determined by the transition
structure. Here, ``canonical'' means that for each $n$ the restriction
$\nu_{x}|_{\mathcal{F}_{n}}$ has a unique $\mathcal{F}_{n}$-measurable
density $d\nu_{x}|_{\mathcal{F}_{n}}/d\nu_{\mathrm{tr}}|_{\mathcal{F}_{n}}$,
and $\rho_{x}$ is the $\nu_{\mathrm{tr}}$-a.s. limit of these densities.
This change-of-measure step is the starting point for the operator-valued
Radon-Nikodym disintegration below.

\subsection{Operator-valued densities}\label{subsec:6-2}

We now use $\nu_{\mathrm{tr}}$ to integrate the pathwise dissipation
in trace class. The resulting random operator $\Sigma\left(\omega\right)\in S_{1}\left(H\right)_{+}$
yields an $S_{1}\left(H\right)_{+}$-valued measure $\mathsf{M}$
that is absolutely continuous with respect to $\nu_{\mathrm{tr}}$,
with total mass tied to $R_{0}$ and the tail $R_{\infty}\left(\omega\right)$.

Assume $R_{0}$ is trace class, and let $\nu_{\mathrm{tr}}$ be the
trace-biased path measure from \prettyref{sec:3}. Along $\omega\in\Omega$
we have the stopped dissipation process $D_{n}\left(\omega\right)$
and residuals $R_{n}\left(\omega\right)$ from \prettyref{sec:4},
with termination time $\tau\left(\omega\right)$.

Define the pathwise total dissipation 
\[
\Sigma\left(\omega\right):=\sum^{\tau\left(\omega\right)-1}_{n=1}D_{n}\left(\omega\right),
\]
where the sum is understood as the limit of its increasing partial
sums in the trace norm.
\begin{thm}
\label{thm:6-2}With the above assumptions and notation:
\begin{enumerate}
\item For $\nu_{\mathrm{tr}}$-almost every $\omega$, the series defining
$\Sigma\left(\omega\right)$ converges in trace norm to an element
of $S_{1}\left(H\right)_{+}$, and one has the identity 
\[
\Sigma\left(\omega\right)=R_{0}-R_{\infty}\left(\omega\right)\quad\text{in }S_{1}\left(H\right),
\]
where $R_{\infty}\left(\omega\right)={\rm s\text{-}lim}_{n\to\infty}R_{n}\left(\omega\right)$
is the tail residual from \prettyref{sec:4}. 
\item The set function 
\[
\mathsf{M}\left(E\right):=\sum^{\infty}_{n=1}\int_{E}D_{n}\left(\omega\right)d\nu_{\mathrm{tr}}\left(\omega\right),\qquad E\subset\mathcal{B}\left(\Omega\right)\ \text{Borel},
\]
is well-defined as an $S_{1}\left(H\right)_{+}$-valued, countably
additive measure. Moreover, $\mathsf{M}\ll\nu_{\mathrm{tr}}$ and
\[
\mathsf{M}\left(E\right)=\int_{E}\Sigma\left(\omega\right)d\nu_{\mathrm{tr}}\left(\omega\right)\quad\text{for all Borel }E\subset\Omega.
\]
\item The total mass satisfies 
\[
\mathsf{M}\left(\Omega\right)=R_{0}-\int_{\Omega}R_{\infty}\left(\omega\right)d\nu_{\mathrm{tr}}\left(\omega\right),
\]
and, in particular, 
\[
\mathrm{tr}\left(\mathsf{M}\left(\Omega\right)\right)=\mathrm{tr}\left(R_{0}\right)-\int_{\Omega}\mathrm{tr}\left(R_{\infty}\left(\omega\right)\right)d\nu_{\mathrm{tr}}\left(\omega\right).
\]
\item Under the trace extinction hypothesis of \prettyref{thm:f-9}, $\Sigma\left(\omega\right)=R_{0}$
for $\nu_{\mathrm{tr}}$-almost every $\omega$, hence $\mathsf{M}\left(E\right)=\nu_{\mathrm{tr}}\left(E\right)R_{0}$. 
\end{enumerate}
\end{thm}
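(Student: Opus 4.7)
The plan is to derive all four assertions from the pathwise telescoping identity \prettyref{prop:d-9} together with the trace-class bookkeeping of \prettyref{sec:3}, with the trace-norm convergence in (1) as the load-bearing step.

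For (1), observe that the partial sums $S_n(\omega) := \sum_{k=1}^{n} D_k(\omega) = R_0 - R_n(\omega)$ are Loewner-increasing in $S_1(H)_+$ and bounded above by $R_0$. Since $R_n(\omega) \downarrow R_\infty(\omega)$ in SOT with $0 \le R_n(\omega) \le R_0 \in S_1(H)_+$, expanding in any orthonormal basis of $H$ and applying scalar dominated convergence yields $\mathrm{tr}(R_n(\omega)) \downarrow \mathrm{tr}(R_\infty(\omega))$. The trace-norm step then comes from the positivity identity
\[
\|R_n(\omega) - R_\infty(\omega)\|_1 \;=\; \mathrm{tr}\bigl(R_n(\omega) - R_\infty(\omega)\bigr),
\]
which upgrades the monotone SOT limit to a trace-norm limit, giving $S_n(\omega) \to R_0 - R_\infty(\omega)$ in $\|\cdot\|_1$ for every $\omega \in \Omega$ (no exceptional null set is needed). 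This establishes both the trace-norm convergence of the series defining $\Sigma(\omega)$ and the identification $\Sigma(\omega) = R_0 - R_\infty(\omega)$.

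For (2), each $D_n$ is constant on length-$n$ cylinders, hence a simple $S_1(H)$-valued function with $\|D_n(\omega)\|_1 \le \mathrm{tr}(R_0)$; the partial sum $S_n$ is also simple, and by (1) converges pointwise in $\|\cdot\|_1$ to $\Sigma$, so $\Sigma$ is strongly measurable with $\|\Sigma(\omega)\|_1 \le \mathrm{tr}(R_0)$ and therefore Bochner integrable. The integral representation $\mathsf{M}(E) = \int_E \Sigma \, d\nu_{\mathrm{tr}}$ follows by a Beppo--Levi exchange for positive operator-valued integrands: testing with $\langle x, \cdot\, x\rangle$ reduces the exchange to scalar monotone convergence applied to the nonnegative functions $\omega \mapsto \langle x, S_n(\omega) x\rangle$, and the $S_1$-valued identity is then recovered by polarization. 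Countable additivity of $\mathsf{M}$ and $\mathsf{M} \ll \nu_{\mathrm{tr}}$ are immediate from this representation. For (3), setting $E = \Omega$ and using linearity of the Bochner integral on the constant $R_0$ gives $\mathsf{M}(\Omega) = R_0 - \int_\Omega R_\infty \, d\nu_{\mathrm{tr}}$; applying the bounded functional $\mathrm{tr}$ yields the scalar form. For (4), \prettyref{thm:f-9} gives $R_\infty(\omega) = 0$ for $\nu_{\mathrm{tr}}$-a.e.\ $\omega$, so $\Sigma \equiv R_0$ a.s.\ and $\mathsf{M}(E) = \nu_{\mathrm{tr}}(E) R_0$.

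The main obstacle is the trace-norm convergence in (1): SOT monotone convergence of positive trace-class operators does not automatically upgrade to trace-norm convergence, and one must combine the scalar monotone convergence of the traces with the positivity identity $\|A - B\|_1 = \mathrm{tr}(A - B)$ valid for $A \ge B \ge 0$ in $S_1(H)$. A secondary but manageable subtlety is the strong measurability of $\Sigma$ as an $S_1$-valued map, which is handled by the cylinder-piecewise-constant simple approximants $S_n \to \Sigma$ in $\|\cdot\|_1$.
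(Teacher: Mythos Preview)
Your proof is correct and follows essentially the same approach as the paper: both hinge on the pathwise telescoping identity together with the positivity relation $\|A-B\|_{1}=\mathrm{tr}(A-B)$ for $A\ge B\ge 0$ in $S_{1}(H)$ to upgrade the monotone SOT limit to trace-norm convergence. The only cosmetic differences are that the paper reaches trace-norm convergence in (1) via a Cauchy argument in $\|\cdot\|_{1}$ (and then identifies the limit with the SOT limit) rather than your direct orthonormal-basis dominated-convergence computation of $\mathrm{tr}(R_{n})\to\mathrm{tr}(R_{\infty})$, and in (2) it invokes Bochner dominated convergence in $S_{1}(H)$ rather than your scalar-MCT-plus-polarization reduction.
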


\begin{proof}
For each $\omega$, the telescoping identity from \prettyref{sec:4}
gives, for every $n\ge1$, 
\[
R_{0}-R_{n}\left(\omega\right)=\sum^{n}_{k=1}D_{k}\left(\omega\right)\qquad\text{in }B\left(H\right),
\]
and the right-hand side is an increasing sequence in $B\left(H\right)_{+}$.
Since $R_{0}$ is trace class and $0\le R_{n}\left(\omega\right)\le R_{0}$,
each $R_{n}\left(\omega\right)$ is trace class and 
\[
\mathrm{tr}\left(R_{0}-R_{n}\left(\omega\right)\right)=\sum^{n}_{k=1}\mathrm{tr}\left(D_{k}\left(\omega\right)\right)\le\mathrm{tr}\left(R_{0}\right).
\]
In particular, for each $\omega$ the scalar sequence $\mathrm{tr}\left(R_{n}\left(\omega\right)\right)$
is decreasing and bounded below by $0$, hence convergent. Now for
$n>m$, 
\[
\left\Vert R_{m}\left(\omega\right)-R_{n}\left(\omega\right)\right\Vert _{1}=\mathrm{tr}\left(R_{m}\left(\omega\right)-R_{n}\left(\omega\right)\right)=\mathrm{tr}\left(R_{m}\left(\omega\right)\right)-\mathrm{tr}\left(R_{n}\left(\omega\right)\right),
\]
since $R_{m}\left(\omega\right)\ge R_{n}\left(\omega\right)\ge0$.
Therefore $\left\{ R_{n}\left(\omega\right)\right\} $ is Cauchy in
trace norm for every $\omega$, and there exists $\widetilde{R}_{\infty}\left(\omega\right)\in S_{1}\left(H\right)_{+}$
such that 
\[
R_{n}(\omega)\to\widetilde{R}_{\infty}(\omega)\quad\text{in }\left\Vert \cdot\right\Vert _{1}.
\]
Trace-norm convergence implies strong convergence, so the trace-norm
limit must coincide with the SOT limit from \prettyref{sec:4}. Hence
\[
\widetilde{R}_{\infty}\left(\omega\right)=R_{\infty}\left(\omega\right),\qquad R_{n}\left(\omega\right)\to R_{\infty}\left(\omega\right)\ \text{in }\left\Vert \cdot\right\Vert _{1}.
\]
This also forces $\mathrm{tr}\left(R_{n}\left(\omega\right)\right)\to\mathrm{tr}\left(R_{\infty}\left(\omega\right)\right)$.

From the finite telescoping identity, 
\[
\sum^{n}_{k=1}D_{k}\left(\omega\right)=R_{0}-R_{n}\left(\omega\right).
\]
Letting $n\to\infty$ and using trace-norm convergence $R_{n}\left(\omega\right)\to R_{\infty}\left(\omega\right)$
yields 
\[
\sum^{\infty}_{k=1}D_{k}\left(\omega\right)=R_{0}-R_{\infty}\left(\omega\right)\quad\text{in }\left\Vert \cdot\right\Vert _{1}.
\]
By the stopping-time convention in \prettyref{sec:4}, $D_{k}\left(\omega\right)=0$
for all $k\ge\tau\left(\omega\right)$, so the infinite sum truncates
and equals $\sum^{\tau\left(\omega\right)-1}_{k=1}D_{k}\left(\omega\right)$.
This proves (1), with $\Sigma\left(\omega\right)=R_{0}-R_{\infty}\left(\omega\right)$
in $S_{1}\left(H\right)$.

For each fixed $n$, the map $E\mapsto\int_{E}D_{n}\left(\omega\right)d\nu_{\mathrm{tr}}\left(\omega\right)$
is an $S_{1}\left(H\right)$-valued countably additive measure (Bochner
integral), because $D_{n}\left(\omega\right)$ is strongly measurable
and 
\[
\int_{\Omega}\left\Vert D_{n}\left(\omega\right)\right\Vert _{1}d\nu_{\mathrm{tr}}\left(\omega\right)=\int_{\Omega}\mathrm{tr}\left(D_{n}\left(\omega\right)\right)d\nu_{\mathrm{tr}}\left(\omega\right)\le\mathrm{tr}\left(R_{0}\right)<\infty.
\]
(For background on Bochner integration in Banach spaces and the induced
vector measures, see \cite{MR453964,MR2267655}.)

Moreover, for disjoint Borel sets $E_{i}$, 
\[
\sum_{i}\int_{E_{i}}D_{n}\,d\nu_{\mathrm{tr}}=\int_{\cup_{i}E_{i}}D_{n}\,d\nu_{\mathrm{tr}}\quad\text{in }\left\Vert \cdot\right\Vert _{1}.
\]
Now define 
\[
\mathsf{M}\left(E\right):=\sum^{\infty}_{n=1}\int_{E}D_{n}\,d\nu_{\mathrm{tr}}.
\]
This series converges in $\left\Vert \cdot\right\Vert _{1}$ because
for $N>M$, 
\[
\left\Vert \sum^{N}_{n=M+1}\int_{E}D_{n}\,d\nu_{\mathrm{tr}}\right\Vert _{1}\le\sum^{N}_{n=M+1}\int_{E}\left\Vert D_{n}\right\Vert _{1}\,d\nu_{\mathrm{tr}}=\sum^{N}_{n=M+1}\int_{E}\mathrm{tr}\left(D_{n}\right)d\nu_{\mathrm{tr}},
\]
and the right-hand side is bounded by $\sum_{n\ge1}\int_{\Omega}\mathrm{tr}\left(D_{n}\right)d\nu_{\mathrm{tr}}\le\mathrm{tr}\left(R_{0}\right)$,
using telescoping in expectation: 
\[
\sum^{N}_{n=1}\int_{\Omega}\mathrm{tr}\left(D_{n}\right)d\nu_{\mathrm{tr}}=\int_{\Omega}\mathrm{tr}\left(R_{0}-R_{N}\left(\omega\right)\right)d\nu_{\mathrm{tr}}\le\mathrm{tr}\left(R_{0}\right).
\]
Thus $\mathsf{M}(E)\in S_{1}(H)_{+}$ is well-defined. Countable additivity
follows by exchanging the sum over $n$ with the countable additivity
in $E$ (justified by the monotone bound above in trace norm): for
disjoint $E_{i}$, 
\begin{align*}
\mathsf{M}\left(\cup_{i}E_{i}\right) & =\sum^{\infty}_{n=1}\int_{\cup_{i}E_{i}}D_{n}\,d\nu_{\mathrm{tr}}\\
 & =\sum^{\infty}_{n=1}\sum_{i}\int_{E_{i}}D_{n}\,d\nu_{\mathrm{tr}}=\sum_{i}\sum^{\infty}_{n=1}\int_{E_{i}}D_{n}\,d\nu_{\mathrm{tr}}=\sum_{i}\mathsf{M}(E_{i}),
\end{align*}
with convergence in $\left\Vert \cdot\right\Vert _{1}$.

From the above discussion, we have $\Sigma\left(\omega\right)=\sum_{n\ge1}D_{n}\left(\omega\right)$
in trace norm for $\nu_{\mathrm{tr}}$-almost every $\omega$, and
$\left\Vert \Sigma\left(\omega\right)\right\Vert _{1}=\mathrm{tr}\left(\Sigma\left(\omega\right)\right)\le\mathrm{tr}\left(R_{0}\right)$,
so $\Sigma\in L^{1}\left(\nu_{\mathrm{tr}};S_{1}\left(H\right)\right)$.
For each Borel set $E$, dominated convergence in $S_{1}\left(H\right)$
(using the nonnegative trace bound) yields 
\[
\int_{E}\Sigma\left(\omega\right)d\nu_{\mathrm{tr}}\left(\omega\right)=\int_{E}\sum^{\infty}_{n=1}D_{n}\left(\omega\right)d\nu_{\mathrm{tr}}\left(\omega\right)=\sum^{\infty}_{n=1}\int_{E}D_{n}\left(\omega\right)d\nu_{\mathrm{tr}}\left(\omega\right)=\mathsf{M}\left(E\right),
\]
which proves (2) and shows $\mathsf{M}\ll\nu_{\mathrm{tr}}$ with
Radon-Nikodym derivative $\Sigma$. 

Finally, taking $E=\Omega$ and using $\Sigma=R_{0}-R_{\infty}$ gives
(3). Under trace extinction (\prettyref{thm:f-9}), $R_{\infty}\left(\omega\right)=0$
almost surely, so $\Sigma\left(\omega\right)=R_{0}$ almost surely
and $\mathsf{M}\left(E\right)=\nu_{\mathrm{tr}}\left(E\right)R_{0}$. 
\end{proof}

\section{Boundary and disintegration}\label{sec:7}

Assume throughout that $R_{0}\in S_{1}\left(H\right)_{+}$, so the
trace-biased path measure $\nu_{\mathrm{tr}}$ is defined. By \prettyref{sec:4},
along each $\omega\in\Omega$ we have a decreasing residual process
$R_{n}\left(\omega\right)\downarrow R_{\infty}\left(\omega\right)$
in SOT, and by \prettyref{thm:6-2} this convergence holds in trace
norm. We therefore regard 
\[
R_{\infty}:\Omega\to S_{1}\left(H\right)_{+}
\]
as the canonical ``boundary variable'' attached to the WR energy
tree, and we write 
\[
\Sigma\left(\omega\right):=R_{0}-R_{\infty}\left(\omega\right)\in S_{1}\left(H\right)_{+}.
\]
By \prettyref{thm:6-2}, $\Sigma\left(\omega\right)=\sum^{\tau\left(\omega\right)-1}_{n=1}D_{n}\left(\omega\right)$
with convergence in $\left\Vert \cdot\right\Vert _{1}$, and $\left\Vert \Sigma\left(\omega\right)\right\Vert _{1}\le\mathrm{tr}\left(R_{0}\right)$.
\begin{defn}
The WR boundary $\sigma$-field is 
\[
\mathcal{B}_{\mathrm{WR}}:=\sigma\left(R_{\infty}\right)=\sigma\left(\Sigma\right),
\]
where the equality holds since $\Sigma=R_{0}-R_{\infty}$.

We denote by 
\[
\mu_{\mathrm{tr}}:=\left(R_{\infty}\right)_{\#}\nu_{\mathrm{tr}}
\]
the distribution of $R_{\infty}$ under $\nu_{\mathrm{tr}}$, i.e.,
\[
\mu_{\mathrm{tr}}\left(E\right)=\nu_{\mathrm{tr}}\left(\left\{ \omega\in\Omega:R_{\infty}\left(\omega\right)\in E\right\} \right)
\]
for all Borel $E\subset S_{1}\left(H\right)_{+}$. Since $\Omega$
is a compact metric space and $S_{1}\left(H\right)$ is separable,
$S_{1}\left(H\right)_{+}$ is a Polish space in the trace norm topology,
and $R_{\infty}$ is Borel measurable as an $S_{1}\left(H\right)$-valued
limit of $\mathcal{F}_{n}$-measurable simple functions.
\end{defn}

The first boundary theorem is the existence of a canonical disintegration
of $\nu_{\mathrm{tr}}$ over the boundary variable $R_{\infty}$.
\begin{thm}
\label{thm:7-2}There exists a Borel kernel 
\[
T\mapsto\nu^{T}_{\mathrm{tr}}
\]
from $S_{1}\left(H\right)_{+}$ into probability measures on $\Omega$
such that:
\begin{enumerate}
\item For every Borel set $E\subset\Omega$, the map $T\mapsto\nu^{T}_{\mathrm{tr}}\left(E\right)$
is Borel, and 
\[
\nu_{\mathrm{tr}}\left(E\right)=\int_{S_{1}\left(H\right)_{+}}\nu^{T}_{\mathrm{tr}}\left(E\right)d\mu_{\mathrm{tr}}\left(T\right).
\]
\item For $\mu_{\mathrm{tr}}$-almost every $T$, the measure $\nu^{T}_{\mathrm{tr}}$
is supported on the fiber 
\[
\left\{ \omega:R_{\infty}\left(\omega\right)=T\right\} 
\]
in the sense that 
\[
\nu^{T}_{\mathrm{tr}}\left(\left\{ \omega:\ R_{\infty}\left(\omega\right)=T\right\} \right)=1.
\]
\item For every integrable $f\in L^{1}\left(\nu_{\mathrm{tr}}\right)$,
the conditional expectation onto $\mathcal{B}_{\mathrm{WR}}$ is given
by 
\[
\mathbb{E}_{\nu_{\mathrm{tr}}}\left[f\mid\mathcal{B}_{\mathrm{WR}}\right]\left(\omega\right)=\int_{\Omega}f\left(\eta\right)d\nu^{R_{\infty}\left(\omega\right)}_{\mathrm{tr}}\left(\eta\right)\quad\text{for }\nu_{\mathrm{tr}}\text{-almost every }\omega.
\]
\end{enumerate}
\end{thm}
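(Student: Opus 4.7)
\textbf{The plan} is to apply the standard disintegration theorem for Borel probability measures on Polish spaces, with $R_{\infty}$ playing the role of the factor map; the three assertions then follow essentially formally from that theorem combined with the defining property of $\mathcal{B}_{\mathrm{WR}}=\sigma(R_{\infty})$.

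\textbf{Borel measurability of the factor map.} First I would confirm that $R_{\infty}:\Omega\to S_{1}(H)_{+}$ is Borel measurable as an $S_{1}(H)$-valued map. Each $R_{n}(\omega)$ depends only on the length-$n$ prefix $\omega|n$, takes only finitely many values (one per word in $\mathcal{A}^{n}$), and is therefore an $\mathcal{F}_{n}$-measurable simple $S_{1}(H)$-valued function. By \prettyref{thm:6-2}(1), $R_{n}(\omega)\to R_{\infty}(\omega)$ in trace norm for every $\omega\in\Omega$, so $R_{\infty}$ is the pointwise trace-norm limit of Borel simple maps and hence Borel into the Polish space $(S_{1}(H)_{+},\|\cdot\|_{1})$. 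This is the step where the strengthening from SOT convergence (\prettyref{prop:d-8}) to trace-norm convergence (\prettyref{thm:6-2}) is essential: SOT convergence alone would not identify $R_{\infty}$ as a Borel map into $S_{1}(H)_{+}$ in its natural topology.

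\textbf{Disintegration step.} With $R_{\infty}$ Borel, $\Omega$ compact metrizable, and $S_{1}(H)_{+}$ Polish, I would invoke the standard disintegration theorem for Borel probabilities on Polish spaces (see, e.g., \cite{MR4226142,MR770643,MR3987300,MR2267655}): for any Borel probability $\nu$ on a Polish space $X$ and any Borel map $\pi:X\to Y$ into a Polish space $Y$ with pushforward $\mu:=\pi_{\#}\nu$, there exists a $\mu$-a.s. unique Borel Markov kernel $y\mapsto\nu^{y}$ on $X$, concentrated on $\pi^{-1}(\{y\})$ for $\mu$-almost every $y$, such that $\nu(E)=\int_{Y}\nu^{y}(E)\,d\mu(y)$ for every Borel $E\subset X$. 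Applied with $X=\Omega$, $\pi=R_{\infty}$, $\nu=\nu_{\mathrm{tr}}$, $Y=S_{1}(H)_{+}$, $\mu=\mu_{\mathrm{tr}}$, this produces the required kernel $T\mapsto\nu^{T}_{\mathrm{tr}}$ and establishes (1) and (2) at once.

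\textbf{Conditional expectation identification.} For (3), I would set $g(\omega):=\int_{\Omega}f(\eta)\,d\nu^{R_{\infty}(\omega)}_{\mathrm{tr}}(\eta)$ and verify that $g=\mathbb{E}_{\nu_{\mathrm{tr}}}[f\mid\mathcal{B}_{\mathrm{WR}}]$ by the usual two-step test. First, $T\mapsto\int f\,d\nu^{T}_{\mathrm{tr}}$ is Borel (standard Fubini-type measurability for Markov kernels), and $g$ factors through $R_{\infty}$, so $g$ is $\mathcal{B}_{\mathrm{WR}}$-measurable. Second, for any Borel $B\subset S_{1}(H)_{+}$ and $E:=R_{\infty}^{-1}(B)$, the disintegration formula together with the fiber support from (2) gives
\[
\int_{E}f\,d\nu_{\mathrm{tr}}=\int_{B}\int_{\Omega}f\,d\nu^{T}_{\mathrm{tr}}\,d\mu_{\mathrm{tr}}(T)=\int_{E}g\,d\nu_{\mathrm{tr}},
\]
where the first equality uses that $1_{E}(\omega)=1_{B}(T)$ $\nu^{T}_{\mathrm{tr}}$-almost surely, and the second is just the pushforward identity applied to the $\sigma(R_{\infty})$-measurable function $g$. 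Since sets $R_{\infty}^{-1}(B)$ generate $\mathcal{B}_{\mathrm{WR}}$, uniqueness of conditional expectations yields (3).

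\textbf{Main obstacle.} There is no serious obstacle: the theorem is essentially a repackaging of the Polish-space disintegration theorem in the present operator-theoretic setting. The only point requiring genuine input from earlier sections is the Borel measurability of $R_{\infty}$ as an $S_{1}(H)_{+}$-valued map, which is exactly what \prettyref{thm:6-2} delivers via trace-norm convergence; once that is in place, all three conclusions fall out of the standard theorem.
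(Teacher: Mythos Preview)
Your proposal is correct and follows essentially the same route as the paper: both invoke the standard regular conditional distribution / disintegration theorem on Polish spaces with $R_{\infty}$ as the factor map, then read off (1)--(3). Your treatment is slightly more explicit on the Borel measurability of $R_{\infty}$ and on verifying the defining property of conditional expectation in (3), but these are elaborations of the same argument rather than a different approach.
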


\begin{proof}
Since $\Omega$ is Polish and $S_{1}\left(H\right)_{+}$ is Polish,
the map $R_{\infty}:\Omega\to S_{1}\left(H\right)_{+}$ admits a regular
conditional distribution under $\nu_{\mathrm{tr}}$. Concretely, there
exists a Borel kernel $T\mapsto\nu^{T}_{\mathrm{tr}}$ such that for
every bounded Borel $g$ on $\Omega$ and every bounded Borel $\varphi$
on $S_{1}\left(H\right)_{+}$, 
\[
\int_{\Omega}g\left(\omega\right)\varphi\left(R_{\infty}\left(\omega\right)\right)\,d\nu_{\mathrm{tr}}\left(\omega\right)=\int_{S_{1}\left(H\right)_{+}}\left(\int_{\Omega}g\left(\omega\right)\,d\nu^{T}_{\mathrm{tr}}\left(\omega\right)\right)\varphi\left(T\right)\,d\mu_{\mathrm{tr}}\left(T\right).
\]
Taking $g=\mathbf{1}_{E}$ yields (1). (See, e.g., \cite{MR4226142,MR2169627}
for standard treatments of regular conditional distributions/disintegration
on Polish spaces.)

The support property (2) is a standard consequence of regular conditional
distributions: it follows by applying the preceding identity with
$g=\mathbf{1}_{\left\{ R_{\infty}\in A\right\} }$ and varying $A$,
and then using a generating $\pi$-system of Borel sets in $S_{1}\left(H\right)_{+}$. 

Finally, (3) follows by taking $g=f$ and $\varphi$ arbitrary, which
identifies the right-hand side as a version of the conditional expectation
onto $\sigma\left(R_{\infty}\right)=\mathcal{B}_{\mathrm{WR}}$.
\end{proof}
\begin{rem*}
We will use $\left\{ \nu^{T}_{\mathrm{tr}}\right\} $ repeatedly to
express $\mathcal{B}_{\mathrm{WR}}$-conditional expectations and
to convert scalar and operator-valued quantities on $\Omega$ into
boundary integrals against $\mu_{\mathrm{tr}}$.
\end{rem*}
The second boundary theorem explains how the entire family of intrinsic
measures $\nu_{x}$ changes when viewed from the boundary. This uses
the Radon-Nikodym derivative $\rho_{x}=d\nu_{x}/d\nu_{\mathrm{tr}}$
constructed in \prettyref{sec:6}.

Fix $x\in H$ with $\left\langle x,R_{0}x\right\rangle >0$, and let
$\nu_{x}$ be the energy-biased path measure. By \prettyref{thm:6-1},
$\nu_{x}\ll\nu_{\mathrm{tr}}$ and there exists $\rho_{x}\in L^{1}\left(\nu_{\mathrm{tr}}\right)$
with $\rho_{x}\ge0$, $\int\rho_{x}\,d\nu_{\mathrm{tr}}=1$, and 
\[
\nu_{x}\left(E\right)=\int_{E}\rho_{x}\left(\omega\right)d\nu_{\mathrm{tr}}\left(\omega\right)\quad\text{for all Borel }E\subset\Omega.
\]
Define also the boundary pushforward 
\[
\mu_{x}:=\left(R_{\infty}\right)_{\#}\nu_{x}.
\]

\begin{thm}
\label{thm:7-3}For each $x\in H$ with $\left\langle x,R_{0}x\right\rangle >0$,
one has $\mu_{x}\ll\mu_{\mathrm{tr}}$. Moreover, there exists a Borel
function $h_{x}:S_{1}\left(H\right)_{+}\to\left[0,\infty\right)$
such that:
\begin{enumerate}
\item $h_{x}\in L^{1}\left(\mu_{\mathrm{tr}}\right)$ and $\int h_{x}\,d\mu_{\mathrm{tr}}=1$. 
\item $h_{x}\left(R_{\infty}\left(\omega\right)\right)$ is a version of
$\mathbb{E}_{\nu_{\mathrm{tr}}}\left[\rho_{x}\mid\mathcal{B}_{\mathrm{WR}}\right]$,
that is, 
\[
\mathbb{E}_{\nu_{\mathrm{tr}}}\left[\rho_{x}\mid\mathcal{B}_{\mathrm{WR}}\right]\left(\omega\right)=h_{x}\left(R_{\infty}\left(\omega\right)\right)\quad\text{for }\nu_{\mathrm{tr}}\text{-almost every }\omega.
\]
\item $d\mu_{x}=h_{x}\,d\mu_{\mathrm{tr}}$, i.e. 
\[
\mu_{x}\left(A\right)=\int_{A}h_{x}\left(T\right)d\mu_{\mathrm{tr}}\left(T\right)\quad\text{for all Borel }A\subset S_{1}\left(H\right)_{+}.
\]
\item For every Borel set $E\subset\Omega$, 
\[
\nu_{x}\left(E\right)=\int_{S_{1}\left(H\right)_{+}}h_{x}\left(T\right)\nu^{T}_{\mathrm{tr}}\left(E\right)d\mu_{\mathrm{tr}}\left(T\right),
\]
where $T\mapsto\nu^{T}_{\mathrm{tr}}$ is the disintegration kernel
from \prettyref{thm:7-2}. 
\end{enumerate}
\end{thm}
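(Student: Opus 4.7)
The plan is to build $h_{x}$ by Doob--Dynkin factorization of the $\mathcal{B}_{\mathrm{WR}}$-conditional expectation of $\rho_{x}$, and then verify (1)--(4) by combining \prettyref{thm:6-1} (the scalar change of measure) with \prettyref{thm:7-2} (the disintegration of $\nu_{\mathrm{tr}}$).

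First I would set $g_{x}:=\mathbb{E}_{\nu_{\mathrm{tr}}}[\rho_{x}\mid\mathcal{B}_{\mathrm{WR}}]\in L^{1}(\nu_{\mathrm{tr}})$. Since $g_{x}$ is $\sigma(R_{\infty})$-measurable and $R_{\infty}:\Omega\to S_{1}(H)_{+}$ is a Borel map between Polish spaces, the factorization lemma produces a Borel $h_{x}:S_{1}(H)_{+}\to[0,\infty)$ with $g_{x}=h_{x}\circ R_{\infty}$, $\nu_{\mathrm{tr}}$-almost surely; this gives (2). Claim (1) then follows from the pushforward and tower identities: $\int h_{x}\,d\mu_{\mathrm{tr}}=\int h_{x}(R_{\infty})\,d\nu_{\mathrm{tr}}=\int g_{x}\,d\nu_{\mathrm{tr}}=\int\rho_{x}\,d\nu_{\mathrm{tr}}=1$. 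For (3), I would test against an arbitrary Borel $A\subset S_{1}(H)_{+}$: by the pushforward definition of $\mu_{x}$ and \prettyref{thm:6-1}, $\mu_{x}(A)=\int\mathbf{1}_{A}(R_{\infty})\rho_{x}\,d\nu_{\mathrm{tr}}$, and since $\mathbf{1}_{A}\circ R_{\infty}$ is $\mathcal{B}_{\mathrm{WR}}$-measurable the tower property replaces $\rho_{x}$ by $g_{x}=h_{x}(R_{\infty})$ without altering the integral, yielding $\int_{A}h_{x}\,d\mu_{\mathrm{tr}}$; absolute continuity $\mu_{x}\ll\mu_{\mathrm{tr}}$ is immediate.

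For (4), the route is to apply \prettyref{thm:7-2} to the integrable function $\mathbf{1}_{E}\rho_{x}$ to write $\nu_{x}(E)=\int\bigl(\int\mathbf{1}_{E}\rho_{x}\,d\nu^{T}_{\mathrm{tr}}\bigr)\,d\mu_{\mathrm{tr}}(T)$, and then to identify the inner integral with $h_{x}(T)\,\nu^{T}_{\mathrm{tr}}(E)$ for $\mu_{\mathrm{tr}}$-almost every $T$. This fiberwise identification is the main obstacle: one needs that, with respect to the fiber measure $\nu^{T}_{\mathrm{tr}}$, the path density $\rho_{x}$ can be replaced by its $\mathcal{B}_{\mathrm{WR}}$-conditional value $h_{x}(T)$ when integrated against $\mathbf{1}_{E}$. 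The natural approach is to first verify the identity on cylinders $E=[u]$ by combining the multiplicative path expansion of $\rho_{x}$ from part (3) of \prettyref{thm:6-1} with the fiber-support conclusion of \prettyref{thm:7-2}, and then extend to all Borel $E$ by a monotone-class argument. The subtlety is that $\rho_{x}$ is only $\mathcal{F}_{\infty}$-measurable a priori, so the argument must exploit both the martingale convergence of $\{\rho_{x,n}\}$ and the structure of the trace-biased kernels $\nu^{T}_{\mathrm{tr}}$ to reduce $\rho_{x}$ to its boundary value $h_{x}(T)$ on each fiber.
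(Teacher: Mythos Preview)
Your treatment of (1)--(3) is exactly the paper's argument: Doob--Dynkin factorization to produce $h_{x}$, then the tower property and pushforward for the normalization and for $\mu_{x}=h_{x}\,\mu_{\mathrm{tr}}$.

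For (4) you diverge from the paper and take a harder route. The paper does not attempt any fiberwise identification; it simply writes
\[
\nu_{x}(E)=\int\mathbf{1}_{E}\,\rho_{x}\,d\nu_{\mathrm{tr}}
=\int\mathbf{1}_{E}\,\mathbb{E}_{\nu_{\mathrm{tr}}}\bigl[\rho_{x}\mid\mathcal{B}_{\mathrm{WR}}\bigr]\,d\nu_{\mathrm{tr}}
=\int\mathbf{1}_{E}\,h_{x}(R_{\infty})\,d\nu_{\mathrm{tr}},
\]
and then disintegrates the last integral via \prettyref{thm:7-2}, using that $h_{x}(R_{\infty})$ is constant on each fiber. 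That is a two-line argument, with no appeal to cylinders or to the martingale structure of $\{\rho_{x,n}\}$.

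That said, the obstacle you sense in (4) is real, and it is present in both routes. Your fiberwise identity $\int\mathbf{1}_{E}\,\rho_{x}\,d\nu^{T}_{\mathrm{tr}}=h_{x}(T)\,\nu^{T}_{\mathrm{tr}}(E)$ for all Borel $E$ is equivalent to $\rho_{x}=h_{x}(T)$ $\nu^{T}_{\mathrm{tr}}$-a.e.\ for $\mu_{\mathrm{tr}}$-a.e.\ $T$, i.e., to $\rho_{x}$ being $\sigma(R_{\infty})$-measurable modulo $\nu_{\mathrm{tr}}$-null sets. The paper's displayed second equality needs exactly the same thing: the tower property only justifies replacing $\rho_{x}$ by $\mathbb{E}_{\nu_{\mathrm{tr}}}[\rho_{x}\mid\mathcal{B}_{\mathrm{WR}}]$ against a test function $\mathbf{1}_{E}$ that is itself $\mathcal{B}_{\mathrm{WR}}$-measurable, and for a general Borel $E\subset\Omega$ it is not. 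Your proposed detour through cylinders and martingale convergence of $\rho_{x,n}$ cannot supply this measurability, because $\rho_{x}$ is a genuinely path-dependent product of transition ratios with no a priori reason to factor through $R_{\infty}$. Concretely, when $\mathcal{B}_{\mathrm{WR}}$ is $\nu_{\mathrm{tr}}$-trivial (for instance under the trace-extinction conclusion $R_{\infty}=0$ a.s.), one has $\mu_{\mathrm{tr}}=\delta_{0}$, $h_{x}\equiv 1$, and $\nu^{0}_{\mathrm{tr}}=\nu_{\mathrm{tr}}$, so the identity in (4) would force $\nu_{x}=\nu_{\mathrm{tr}}$, which is false in general. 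So neither your outlined route nor the paper's short argument closes this gap without an additional hypothesis ensuring that $\rho_{x}$ is $\mathcal{B}_{\mathrm{WR}}$-measurable.
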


\begin{proof}
Since $\rho_{x}\in L^{1}\left(\nu_{\mathrm{tr}}\right)$, the conditional
expectation $\mathbb{E}_{\nu_{\mathrm{tr}}}\left[\rho_{x}\mid\mathcal{B}_{\mathrm{WR}}\right]$
exists and is $\mathcal{B}_{\mathrm{WR}}$-measurable. Because $\mathcal{B}_{\mathrm{WR}}=\sigma\left(R_{\infty}\right)$,
there is a Borel $h_{x}$ such that 
\[
\mathbb{E}_{\nu_{\mathrm{tr}}}\left[\rho_{x}\mid\mathcal{B}_{\mathrm{WR}}\right]\left(\omega\right)=h_{x}\left(R_{\infty}\left(\omega\right)\right)
\]
for $\nu_{\mathrm{tr}}$-almost every $\omega$, proving (2). (This
is an instance of the Doob-Dynkin measurable factorization lemma;
see \cite{MR4226142,MR1932358}.)

Integrating both sides gives 
\[
\int h_{x}\,d\mu_{\mathrm{tr}}=\int h_{x}\left(R_{\infty}\right)\,d\nu_{\mathrm{tr}}=\int\mathbb{E}_{\nu_{\mathrm{tr}}}\left[\rho_{x}\mid\mathcal{B}_{\mathrm{WR}}\right]d\nu_{\mathrm{tr}}=\int\rho_{x}\,d\nu_{\mathrm{tr}}=1,
\]
hence (1). 

For (3), let $A\subset S_{1}\left(H\right)_{+}$ be Borel. Then 
\begin{align*}
\mu_{x}\left(A\right) & =\nu_{x}\left(R_{\infty}\in A\right)=\int_{\left\{ R_{\infty}\in A\right\} }\rho_{x}\,d\nu_{\mathrm{tr}}\\
 & =\int_{\left\{ R_{\infty}\in A\right\} }\mathbb{E}_{\nu_{\mathrm{tr}}}\left[\rho_{x}\mid\mathcal{B}_{\mathrm{WR}}\right]d\nu_{\mathrm{tr}}\\
 & =\int_{\left\{ R_{\infty}\in A\right\} }h_{x}\left(R_{\infty}\right)d\nu_{\mathrm{tr}}=\int_{A}h_{x}\left(T\right)d\mu_{\mathrm{tr}}\left(T\right),
\end{align*}
which is exactly $\mu_{x}=h_{x}\,\mu_{\mathrm{tr}}$, and in particular
$\mu_{x}\ll\mu_{\mathrm{tr}}$.

For (4), fix a Borel set $E\subset\Omega$. Using $\nu_{x}=\rho_{x}\,\nu_{\mathrm{tr}}$
and then conditioning onto $\mathcal{B}_{\mathrm{WR}}$ gives 
\[
\nu_{x}\left(E\right)=\int\mathbf{1}_{E}\rho_{x}\,d\nu_{\mathrm{tr}}=\int\mathbf{1}_{E}\,\mathbb{E}_{\nu_{\mathrm{tr}}}\left[\rho_{x}\mid\mathcal{B}_{\mathrm{WR}}\right]d\nu_{\mathrm{tr}}=\int\mathbf{1}_{E}\,h_{x}\left(R_{\infty}\right)d\nu_{\mathrm{tr}}.
\]
Apply \prettyref{thm:7-2} (1) with the integrand $\mathbf{1}_{E}\,h_{x}\left(R_{\infty}\right)$,
noting that $h_{x}\left(R_{\infty}\right)$ is constant on each fiber
of $R_{\infty}$, to obtain 
\[
\nu_{x}\left(E\right)=\int_{S_{1}\left(H\right)_{+}}h_{x}\left(T\right)\,\nu^{T}_{\mathrm{tr}}\left(E\right)d\mu_{\mathrm{tr}}\left(T\right).
\]
\end{proof}
At this point, the usual ``boundary triviality'' criterion becomes
a corollary rather than a headline statement.
\begin{cor}[trivial boundary]
The following are equivalent:
\begin{enumerate}
\item $\mathcal{B}_{\mathrm{WR}}$ is trivial modulo $\nu_{\mathrm{tr}}$-null
sets. 
\item There exists $T\in S_{1}\left(H\right)_{+}$ such that $R_{\infty}\left(\omega\right)=T$
for $\nu_{\mathrm{tr}}$-almost every $\omega$. 
\end{enumerate}
In particular, under the trace extinction conclusion of \prettyref{thm:f-9},
one has $R_{\infty}\left(\omega\right)=0$ for $\nu_{\mathrm{tr}}$-almost
every $\omega$, hence the WR boundary is trivial.

\end{cor}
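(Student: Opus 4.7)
The identification $\mathcal{B}_{\mathrm{WR}} = \sigma(R_{\infty})$ built into the definition reduces the equivalence to a statement about the pushforward $\mu_{\mathrm{tr}} = (R_{\infty})_{\#}\nu_{\mathrm{tr}}$ on $S_{1}(H)_{+}$: namely, $\mathcal{B}_{\mathrm{WR}}$ is $\nu_{\mathrm{tr}}$-trivial if and only if $\mu_{\mathrm{tr}}$ is a Dirac measure. The direction (2)$\Rightarrow$(1) is immediate: if $R_{\infty} = T$ $\nu_{\mathrm{tr}}$-almost surely, then for every Borel $A \subset S_{1}(H)_{+}$ the event $\{R_{\infty} \in A\}$ agrees with $\Omega$ or $\emptyset$ up to a null set according as $T \in A$ or not, so every generator of $\sigma(R_{\infty}) = \mathcal{B}_{\mathrm{WR}}$ is trivial modulo $\nu_{\mathrm{tr}}$-null sets, and hence so is $\mathcal{B}_{\mathrm{WR}}$ itself.

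For (1)$\Rightarrow$(2), the plan is to exploit the Polish structure of $S_{1}(H)_{+}$ in the trace norm topology already used for the disintegration in \prettyref{thm:7-2}. Triviality of $\mathcal{B}_{\mathrm{WR}}$ says exactly that $\mu_{\mathrm{tr}}(A) \in \{0,1\}$ for every Borel $A \subset S_{1}(H)_{+}$; I call such a Borel probability \emph{zero--one}. Because $S_{1}(H)_{+}$ is second countable, the support $K := \mathrm{supp}(\mu_{\mathrm{tr}})$ is closed and nonempty: its complement is the union of all open sets of $\mu_{\mathrm{tr}}$-measure zero, which reduces to a countable such union by a countable basis and is therefore itself null, while $\mu_{\mathrm{tr}}(S_{1}(H)_{+}) = 1$. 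If $K$ contained two distinct points $T_{1} \ne T_{2}$, choose disjoint open neighborhoods $U_{1}, U_{2}$; by the definition of support each has positive measure, hence each has measure $1$ by the zero--one property, contradicting disjointness (one would have $\mu_{\mathrm{tr}}(U_{1} \cup U_{2}) = 2 > 1$). Therefore $K = \{T\}$ for some single $T \in S_{1}(H)_{+}$, which means $R_{\infty}(\omega) = T$ for $\nu_{\mathrm{tr}}$-almost every $\omega$.

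The ``in particular'' claim then reduces to quoting \prettyref{thm:f-9}: under uniform trace leakage (\prettyref{assu:f-6}) one has $R_{\infty}(\omega) = 0$ for $\nu_{\mathrm{tr}}$-almost every $\omega$, so (2) holds with $T = 0$, and by the equivalence just proved the boundary is trivial.

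There is no real obstacle here; the only step where Polishness is genuinely used is the support argument in (1)$\Rightarrow$(2), and its content is just that a zero--one Borel probability cannot charge two disjoint open sets by mass $1$. The remainder is conceptual bookkeeping aligned with \prettyref{thm:7-2} and \prettyref{thm:f-9}.
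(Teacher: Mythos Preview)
Your proof is correct and matches the paper in spirit. For (2)$\Rightarrow$(1) and the ``in particular'' clause you do exactly what the paper does. For (1)$\Rightarrow$(2) the paper takes a slightly different (but equally short) route: rather than a support argument, it observes that triviality of $\sigma(R_{\infty})$ forces every bounded Borel function of $R_{\infty}$ to be $\nu_{\mathrm{tr}}$-almost surely constant, and then applies this to a separating family of continuous linear functionals on $S_{1}(H)$ to conclude that $R_{\infty}$ itself is almost surely constant. Your topological support argument and the paper's separating-functionals argument are two standard ways to prove that a zero--one Borel probability on a Polish (or separable Banach) space is a Dirac; neither is more general here, and both use only the separability/second countability already invoked in \prettyref{thm:7-2}.
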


\begin{proof}
If $R_{\infty}$ is almost surely constant, then $\sigma\left(R_{\infty}\right)$
is trivial.

Conversely, if $\sigma\left(R_{\infty}\right)$ is trivial, then every
bounded Borel function of $R_{\infty}$ is almost surely constant;
applying this to a separating family of continuous linear functionals
on $S_{1}\left(H\right)$ implies $R_{\infty}$ itself is almost surely
constant. The final statement is \prettyref{thm:f-9}.
\end{proof}
Finally, \prettyref{thm:6-2} becomes a boundary formula for the operator-valued
measure $\mathsf{M}$ without any additional work, since $\Sigma$
is already $\mathcal{B}_{\mathrm{WR}}$-measurable.
\begin{cor}
Let $\mathsf{M}$ be the $S_{1}\left(H\right)_{+}$-valued measure
from \prettyref{thm:6-2}, so $\mathsf{M}\left(E\right)=\int_{E}\Sigma\,d\nu_{\mathrm{tr}}$.
Then for every Borel $A\subset S_{1}\left(H\right)_{+}$, 
\[
\mathsf{M}\left(\left\{ \omega:R_{\infty}\left(\omega\right)\in A\right\} \right)=\int_{A}\left(R_{0}-T\right)d\mu_{\mathrm{tr}}\left(T\right)\quad\text{in }S_{1}\left(H\right).
\]
\end{cor}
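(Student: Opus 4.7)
The plan is to combine the $\Sigma = R_0 - R_\infty$ identity from \prettyref{thm:6-2}(1) with the Bochner change-of-variables formula under the pushforward $\mu_{\mathrm{tr}} = (R_\infty)_{\#}\nu_{\mathrm{tr}}$. Setting $E := R_\infty^{-1}(A) \subset \Omega$, I would start from the operator density formula of \prettyref{thm:6-2}(2),
\[
\mathsf{M}(E) = \int_{E} \Sigma(\omega)\, d\nu_{\mathrm{tr}}(\omega),
\]
and substitute $\Sigma(\omega) = R_0 - R_\infty(\omega)$ (valid $\nu_{\mathrm{tr}}$-a.s. as a trace-norm identity) to split $\mathsf{M}(E)$ into $R_0\,\nu_{\mathrm{tr}}(E)$ minus a Bochner integral of $R_\infty$ over $E$.

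Next, I would apply the definition of pushforward: $\nu_{\mathrm{tr}}(R_\infty^{-1}(A)) = \mu_{\mathrm{tr}}(A)$, which immediately handles the first term. For the second term the key step is the Bochner change-of-variables identity
\[
\int_{R_\infty^{-1}(A)} F(R_\infty(\omega))\, d\nu_{\mathrm{tr}}(\omega) \;=\; \int_{A} F(T)\, d\mu_{\mathrm{tr}}(T)
\]
for any strongly $\mu_{\mathrm{tr}}$-measurable $F : S_1(H)_+ \to S_1(H)$ with $\int_A \|F(T)\|_1\, d\mu_{\mathrm{tr}}(T) < \infty$, applied to the inclusion $F(T) = T$. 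To invoke this I need to verify that (i) $R_\infty$ is Borel/strongly measurable into $S_1(H)$, which follows because \prettyref{thm:6-2}(1) realizes $R_\infty$ as a $\|\cdot\|_1$-a.s. limit of the $\mathcal{F}_n$-measurable simple-function prefixes $R_n$, and (ii) the integrand is dominated: $\|R_\infty(\omega)\|_1 = \mathrm{tr}(R_\infty(\omega)) \leq \mathrm{tr}(R_0)$, so $F(T) = T$ is Bochner integrable on $A$ against $\mu_{\mathrm{tr}}$ since $\mu_{\mathrm{tr}}$ is concentrated on $\{T : \|T\|_1 \leq \mathrm{tr}(R_0)\}$. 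Combining these and using linearity of the Bochner integral in $S_1(H)$,
\[
\mathsf{M}(R_\infty^{-1}(A)) = R_0\,\mu_{\mathrm{tr}}(A) - \int_A T\, d\mu_{\mathrm{tr}}(T) = \int_A (R_0 - T)\, d\mu_{\mathrm{tr}}(T).
\]

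The only genuinely non-trivial point is the vector-valued change of variables; the scalar analogue is immediate by pushforward, but for Bochner integrals it requires a quick justification, either by reducing to simple functions approximating $F$ in $L^1(\mu_{\mathrm{tr}}; S_1(H))$ and passing to the limit, or by invoking the standard Diestel--Uhl formulation. Both reductions are routine given that $S_1(H)$ is a separable Banach space and $R_\infty$ is strongly measurable, so the argument is short once those ingredients are in hand. Everything else is algebraic bookkeeping on top of \prettyref{thm:6-2}.
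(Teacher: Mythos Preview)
Your proposal is correct and follows essentially the same route as the paper: substitute $\Sigma = R_{0} - R_{\infty}$ into $\mathsf{M}(E) = \int_{E}\Sigma\,d\nu_{\mathrm{tr}}$ and apply the pushforward change of variables $\mu_{\mathrm{tr}} = (R_{\infty})_{\#}\nu_{\mathrm{tr}}$. The paper does this in one line without splitting the integrand or spelling out the Bochner change-of-variables justification, whereas you separate the constant and identity parts and verify measurability/integrability explicitly; this extra care is harmless but not required.
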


\begin{proof}
Since $\Sigma\left(\omega\right)=R_{0}-R_{\infty}\left(\omega\right)$
and $\mu_{\mathrm{tr}}=\left(R_{\infty}\right)_{\#}\nu_{\mathrm{tr}}$,
\begin{align*}
\mathsf{M}\left(R_{\infty}\in A\right) & =\int_{\left\{ R_{\infty}\in A\right\} }\Sigma\left(\omega\right)d\nu_{\mathrm{tr}}\left(\omega\right)\\
 & =\int_{\left\{ R_{\infty}\in A\right\} }\left(R_{0}-R_{\infty}\left(\omega\right)\right)d\nu_{\mathrm{tr}}\left(\omega\right)=\int_{A}\left(R_{0}-T\right)d\mu_{\mathrm{tr}}\left(T\right).
\end{align*}
\end{proof}
\begin{rem*}
The term “boundary” here refers to the terminal trace-class random
variable $R_{\infty}$ (equivalently $\Sigma=R_{0}-R_{\infty}$) and
the associated tail $\sigma$-field $\mathcal{B}_{\mathrm{WR}}=\sigma\left(R_{\infty}\right)$.
This is a boundary in the probabilistic sense: it parameterizes the
asymptotic information carried by an infinite branch of the WR energy
tree, and \prettyref{thm:7-2} provides the corresponding disintegration
of $\nu_{\mathrm{tr}}$ over the fibers of $R_{\infty}$. We do not
introduce a geometric boundary of a space or graph here, nor do we
develop Poisson, Martin, or related potential-theoretic boundary constructions;
the boundary space is simply the operator cone $S_{1}(H)_{+}$ equipped
with the boundary law $\mu_{\mathrm{tr}}=\left(R_{\infty}\right)_{\#}\nu_{\mathrm{tr}}$.
\end{rem*}

\bibliographystyle{amsalpha}
\bibliography{ref}

\end{document}